\documentclass{amsart}
\usepackage[all]{xy}
\usepackage{color}
\usepackage{amsthm}
\usepackage{amssymb} 
\usepackage[colorlinks=true]{hyperref}






\setcounter{equation}{0}

\numberwithin{equation}{section}

\newtheorem{theorem}[equation]{Theorem}
\newtheorem*{theorem*}{Theorem}
\newtheorem{lemma}[equation]{Lemma}

\newtheorem*{conjecture*}{Mamma Conjecture}
\newtheorem*{conjecture1*}{Mamma Conjecture (revisited)}
\newtheorem{proposition}[equation]{Proposition}
\newtheorem{corollary}[equation]{Corollary}
\newtheorem*{corollary*}{Corollary}

\theoremstyle{remark}
\newtheorem{definition}[equation]{Definition}

\newtheorem{notation}[equation]{Notation}

\theoremstyle{remark}
\newtheorem{remark}[equation]{Remark}

\setcounter{tocdepth}{1}

\newcommand{\cA}{{\mathcal A}}
\newcommand{\cB}{{\mathcal B}}
\newcommand{\cC}{{\mathcal C}}
\newcommand{\cD}{{\mathcal D}}

\newcommand{\cN}{{\mathcal N}}
\newcommand{\cO}{{\mathcal O}}

\newcommand{\cU}{{\mathcal U}}
\newcommand{\cV}{{\mathcal V}}

\newcommand{\cZ}{{\mathcal Z}}

\newcommand{\onil}{\otimes_\mathrm{nil}}
\newcommand{\sVect}{\mathrm{sVect}}

\newcommand{\bbF}{\mathbb{F}}
\newcommand{\bbG}{\mathbb{G}}

\newcommand{\bbL}{\mathbb{L}}
\newcommand{\bbN}{\mathbb{N}}
\newcommand{\bbP}{\mathbb{P}}

\newcommand{\bbQ}{\mathbb{Q}}
\newcommand{\bbZ}{\mathbb{Z}}

\DeclareMathOperator{\SmProj}{SmProj} 
\DeclareMathOperator{\DM}{DM} 

\DeclareMathOperator{\Id}{Id}
\DeclareMathOperator{\id}{id}

\DeclareMathOperator{\NChow}{NChow} 
\DeclareMathOperator{\NVoev}{NVoev} 
\DeclareMathOperator{\NNum}{NNum} 
\DeclareMathOperator{\NHom}{NHom} 
\DeclareMathOperator{\Chow}{Chow} 
\DeclareMathOperator{\AM}{AM} 
\DeclareMathOperator{\MAM}{MAM} 
\DeclareMathOperator{\NAM}{NAM} 
\DeclareMathOperator{\NMAM}{NMAM} 
\DeclareMathOperator{\Num}{Num} 

\DeclareMathOperator{\K}{K}

\DeclareMathOperator{\Mix}{KMM} 
\DeclareMathOperator{\KPM}{KPM} 
\DeclareMathOperator{\KTM}{KTM} 
\DeclareMathOperator{\Fun}{Fun} 



\newcommand{\dgcat}{\mathsf{dgcat}}
\newcommand{\sdgcat}{\mathsf{sdgcat}}

\newcommand{\perf}{\mathsf{perf}}

\newcommand{\dg}{\mathsf{dg}}

\newcommand{\Hom}{\mathrm{Hom}}
\newcommand{\End}{\mathrm{End}}

\newcommand{\rep}{\mathsf{rep}}

\newcommand{\dgHo}{\mathsf{H}^0}

\newcommand{\Hmo}{\mathsf{Hmo}}
\newcommand{\sHmo}{\mathsf{sHmo}}
\newcommand{\op}{\mathsf{op}}

\newcommand{\too}{\longrightarrow}

\newcommand{\ie}{\textsl{i.e.}\ }

\begin{document}

\title[Noncommutative Artin motives]{Noncommutative Artin motives}
\author{Matilde Marcolli and Gon{\c c}alo~Tabuada}

\address{Matilde Marcolli, Mathematics Department, Mail Code 253-37, Caltech, 1200 E.~California Blvd. Pasadena, CA 91125, USA}
\email{matilde@caltech.edu} 
\urladdr{http://www.its.caltech.edu/~matilde}

\address{Gon{\c c}alo Tabuada, Department of Mathematics, MIT, Cambridge, MA 02139, USA}
\email{tabuada@math.mit.edu}
\urladdr{http://math.mit.edu/~tabuada}

\subjclass[2000]{14C15, 14F40, 14G32, 18G55, 19D55}
\date{\today}

\keywords{Artin motives, motivic Galois groups, noncommutative motives}
\thanks{The first named author was supported by NSF grants
DMS-0901221, DMS-1007207, DMS-1201512, and PHY-1205440.
The second named author was supported by the NEC Award-$2742738$.}

\abstract{In this article we introduce the category of noncommutative Artin motives as well as the category of noncommutative mixed Artin motives. In the pure world, we start by proving that the classical category $\AM(k)_\bbQ$ of Artin motives (over a base field $k$) can be characterized as the largest category inside Chow motives which fully-embeds into noncommutative Chow motives. Making use of a refined bridge between pure motives and noncommutative pure motives we then show that the image of this full embedding, which we call the category $\NAM(k)_\bbQ$ of {\em noncommutative Artin motives}, is invariant under the different equivalence relations and modification of the symmetry isomorphism constraints. As an application, we recover the absolute Galois group $\mathrm{Gal}(\overline{k}/k)$ from the Tannakian formalism applied to $\NAM(k)_\bbQ$. Then, we develop the base-change formalism in the world of noncommutative pure motives. As an application, we obtain new tools for the study of motivic decompositions and Schur/Kimura finiteness. Making use of this theory of base-change we then construct a short exact sequence relating $\mathrm{Gal}(\overline{k}/k)$ with the noncommutative motivic Galois groups of $k$ and $\overline{k}$. Finally, we describe a precise relationship between this short exact sequence and the one constructed by Deligne-Milne. In the mixed world, we introduce the triangulated category $\NMAM(k)_\bbQ$ of {\em noncommutative mixed Artin motives} and construct a faithful functor from the classical category $\MAM(k)_\bbQ$ of mixed Artin motives to it. When $k$ is a finite field this functor is an equivalence. On the other hand, when $k$ is of characteristic zero $\NMAM(k)_\bbQ$ is much richer than $\MAM(k)_\bbQ$ since its higher Ext-groups encode all the (rationalized) higher algebraic $K$-theory of finite {\'e}tale $k$-schemes. In the appendix we establish a general result about short exact sequences of Galois groups which is of independent interest. As an application, we obtain a new proof of Deligne-Milne's short exact sequence.
}}
\maketitle
\vskip-\baselineskip
\vskip-\baselineskip

\section{Introduction}
In this article we further the theory of noncommutative motives, initiated in \cite{CT,CT1,Semi,Kontsevich,Galois,uncond,Duke,IMRN,CvsNC, Weight}, by introducing the categories of noncommutative (mixed) Artin motives and exploring many of its features.

\subsection*{Artin motives}
Recall from \cite[\S4]{Andre} the construction of the category $\Chow(k)_\bbQ$ of Chow motives (over a base field $k$ and with rational coefficients) and of the functor
\begin{equation}\label{eq:functor-Chow}
M:\mathrm{SmProj}(k)^\op \too \Chow(k)_\bbQ 
\end{equation}
defined on smooth projective $k$-schemes.
Given any full subcategory $\cV$ of $\mathrm{SmProj}(k)$ (stable under finite products and disjoint unions), we can then consider the smallest additive rigid idempotent complete subcategory $\Chow(k)_\bbQ^\cV$ of $\Chow(k)_\bbQ$ generated by the objects $M(Z), Z \in \cV$. For example, by taking for $\cV$ the finite {\'e}tale $k$-schemes we obtain the category $\AM(k)_\bbQ$ of {\em Artin motives}; see \cite[\S4.1.6.1]{Andre}. This abelian semi-simple category plays a central role in the beautiful relationship between motives and Galois theory. Concretely, $\AM(k)_\bbQ$ is invariant under the different equivalence relations (rational, homological, numerical, etc) on algebraic cycles and under the modification of the symmetry isomorphism constraints. Moreover, by considering it inside the Tannakian category $\Num^\dagger(k)_\bbQ$ of numerical motives, the associated motivic Galois group $\mathrm{Gal}(\AM(k)_\bbQ)$ agrees with the absolute Galois group $\mathrm{Gal}(\overline{k}/k)$ of $k$; see \cite[\S6.2.5]{Andre}. For this reason, $\mathrm{Gal}(\Num^\dagger(k)_\bbQ)$ is commonly interpreted as an ``higher dimensional Galois group''. As proved by Deligne-Milne in \cite[\S6]{DelMil}, these groups fit into a short exact sequence
\begin{equation}\label{eq:shortexact}
1 \to \mathrm{Gal}(\Num^\dagger(\overline{k})_\bbQ) \to \mathrm{Gal}(\Num^\dagger(k)_\bbQ) \to \mathrm{Gal}(\overline{k}/k) \to 1
\end{equation} 
which expresses the absolute Galois group as the quotient of the base-change along the algebraic closure; consult also Serre \cite[\S6]{Serre}.
\subsection*{Motivating questions}
As explained in the survey article \cite{survey}, the above functor \eqref{eq:functor-Chow} admits a noncommutative analogue
\begin{equation*}
\cU:\sdgcat(k) \too \NChow(k)_\bbQ
\end{equation*}
defined on saturated dg categories in the sense of Kontsevich; see \S\ref{sub:saturated} and \S\ref{sub:Chow}. Moreover, as proved in \cite[Thm.~1.1]{CvsNC}, there exists a $\bbQ$-linear $\otimes$-functor $\Phi$ making the following diagram commute
\begin{equation}\label{eq:bridge}
\xymatrix{
\mathrm{SmProj}(k)^\op \ar[d]_-{M} \ar[rr]^-{\cD_\perf^\dg(-)} && \sdgcat(k) \ar[d]^{\cU} \\
\Chow(k)_\bbQ \ar[rr]_-{\Phi} && \NChow(k)_\bbQ\,,
}
\end{equation}
where $\cD_\perf^\dg(Z)$ denotes the (unique) dg enhancement of the derived category of perfect complexes of $\cO_Z$-modules (see~\cite{LO}\cite[Example~5.5(i)]{CT1}). Hence, given any category $\cV$ as above, one can consider in a similar way the smallest additive rigid idempotent complete subcategory $\NChow(k)_\bbQ^\cV$ of $\NChow(k)_\bbQ$ generated by the objects $\cU(\cD_\perf^\dg(Z)), Z \in \cV$. By the above commutative diagram one obtains then a well-defined $\bbQ$-linear $\otimes$-functor $\Chow(k)_\bbQ^\cV \to \NChow(k)^\cV_\bbQ$. The above mentioned results on Artin motives lead us naturally to the following questions:

\smallbreak

Question I: \textit{When is the functor $\Chow(k)_\bbQ^\cV \to \NChow(k)^\cV_\bbQ$ an equivalence ?} 

Question II: \textit{Is the noncommutative analogue $\NAM(k)_\bbQ$ of the category of Artin motives also invariant under the different equivalence relations and modification of the symmetry isomorphism constraints ?}

Question III: \textit{Does the noncommutative motivic Galois group $\mathrm{Gal}(\NAM(k)_\bbQ)$ also agree with the absolute Galois group of $k$ ?}

Question IV: \textit{Does the above sequence \eqref{eq:shortexact} admit a noncommutative analogue~?}

\subsection*{Statement of results}
All the above considerations hold more generally with $\bbQ$ replaced by any field extension $F/\bbQ$. In particular, we have a well-defined functor
\begin{equation}\label{eq:functor-central}
\Chow(k)^\cV_F \too \NChow(k)_F^\cV\,.
\end{equation}
The answer to Question I is the following:
\begin{theorem}\label{thm:main1}
The functor \eqref{eq:functor-central} is a (tensor) equivalence if and only if $\cV$ is contained in the category of finite {\'e}tale $k$-schemes.
\end{theorem}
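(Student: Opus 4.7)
By construction, $\Chow(k)^\cV_F$ and $\NChow(k)^\cV_F$ are the smallest additive rigid idempotent complete subcategories generated by the objects $M(Z)$ and $\cU(\cD_\perf^\dg(Z))$, $Z \in \cV$, respectively. Consequently the functor \eqref{eq:functor-central} is automatically essentially surjective and, via the commutative diagram \eqref{eq:bridge}, symmetric monoidal. The plan is therefore to reduce the equivalence question to a bijectivity of Hom-groups on the generators, and then to resolve this via a well-known Hom-computation together with some Chow-theoretic dimension counting. Concretely, by additivity, idempotent completeness and rigidity, the issue is whether \eqref{eq:functor-central} induces a bijection
$$\Hom_{\Chow(k)_F}(M(Z_1), M(Z_2)) \;\longrightarrow\; \Hom_{\NChow(k)_F}\bigl(\cU(\cD_\perf^\dg(Z_1)), \cU(\cD_\perf^\dg(Z_2))\bigr)$$
for all $Z_1, Z_2 \in \cV$.

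The decisive input is the standard Hom-computation for noncommutative Chow motives (proved in \cite{CvsNC}; see also \cite{survey}): for any $X, Y \in \SmProj(k)$ one has a natural identification
$$\Hom_{\NChow(k)_F}\bigl(\cU(\cD_\perf^\dg(X)), \cU(\cD_\perf^\dg(Y))\bigr) \;\cong\; \bigoplus_{i \in \bbZ} \Hom_{\Chow(k)_F}(M(X), M(Y)(i)),$$
under which the map induced by $\Phi$ on Hom-groups corresponds to the inclusion of the $i=0$ summand. Hence \eqref{eq:functor-central} is a tensor equivalence if and only if $\Hom_{\Chow(k)_F}(M(Z_1), M(Z_2)(i)) = 0$ for all $Z_1, Z_2 \in \cV$ and every $i \neq 0$.

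For the ``if'' direction, when $\cV$ consists of finite \'etale $k$-schemes the product $Z_1 \times Z_2$ is again $0$-dimensional, so $\Hom_{\Chow(k)_F}(M(Z_1), M(Z_2)(i)) = CH^i(Z_1 \times Z_2)_F$ vanishes for $i \neq 0$, since a $0$-dimensional scheme carries no cycles in positive codimension. For the converse, suppose $\cV$ contains some $X$ with $d := \dim X \geq 1$. Then the class of any closed point of $X \times X$ produces a nonzero element of $CH^{2d}(X \times X)_F = \Hom_{\Chow(k)_F}(M(X), M(X)(d))$ with $d \neq 0$, so the right-hand side of the Hom-formula strictly exceeds the $i = 0$ summand and \eqref{eq:functor-central} fails to be full.

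The only technical ingredient is the Hom-formula above, which we treat as a black box from earlier work; granting it, the remainder of the argument is an elementary dimension count for Chow groups of zero-dimensional schemes, and I do not anticipate a serious obstacle.
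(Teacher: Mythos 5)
Your proof is correct and follows essentially the same strategy as the paper: both rest on the identification of $\Hom_{\NChow}$ between images of smooth projective schemes with the direct sum $\bigoplus_i \Hom_{\Chow}(M(Z_1),M(Z_2)(i))$ coming from the orbit-category factorization $\Phi = R\circ\pi$ of \cite{CvsNC}, and both reduce fullness to Chow groups of $Z_1\times Z_2$ being concentrated in degree $0$ when the schemes are $0$-dimensional. The only cosmetic differences are that the paper isolates the reduction to generators as an explicit lemma and uses the fundamental class in $\cZ^0_{\mathrm{rat}}(Z\times Z)_F\simeq F$ (i.e.\ the $i=-\dim Z$ summand) rather than a closed point in codimension $2d$ to exhibit non-fullness.
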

Informally speaking, Theorem~\ref{thm:main1} characterizes the category of Artin motives as being precisely the largest subcategory of Chow motives which fully-embeds into noncommutative Chow motives. Let us denote by $\NAM(k)_F$ the image of this full embedding and call it the category of {\em noncommutative Artin motives}. The noncommutative analogues of the classical equivalence relations on algebraic cycles were introduced by the authors in \cite{Semi,Kontsevich,Galois,uncond}. Concretely, we have a sequence of $F$-linear full additive $\otimes$-functors
\begin{equation}\label{eq:composed}
\NChow(k)_F \too \NVoev(k)_F \too \NHom(k)_F \too \NNum(k)_F\,,
\end{equation}
where $\NVoev(k)_F$ denotes the category of noncommutative Voevodsky motives, $\NHom(k)_F$ the category of noncommutative homological motives, and $\NNum(k)_F$ the category of noncommutative numerical motives; consult \S\ref{sec:NCmotives} for details. Making use of a refined bridge between the commutative and the noncommutative world, we answer affirmatively to the first part of Question II:
\begin{theorem}\label{thm:main2}
The composed functor \eqref{eq:composed} becomes fully-faithful when restricted to the category $\NAM(k)_F$ of noncommutative Artin motives.
\end{theorem}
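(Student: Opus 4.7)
The plan is to reduce the problem to Theorem~\ref{thm:main1} plus a direct computation showing that the noncommutative numerical equivalence relation is trivial on the Hom-groups attached to finite {\'e}tale $k$-schemes. By Theorem~\ref{thm:main1}, the bridge functor $\Phi$ restricts to a (tensor) equivalence $\AM(k)_F \simeq \NAM(k)_F$, so it is enough to show that the composition
\begin{equation*}
\AM(k)_F \stackrel{\Phi}{\longrightarrow} \NAM(k)_F \subset \NChow(k)_F \longrightarrow \NVoev(k)_F \longrightarrow \NHom(k)_F \longrightarrow \NNum(k)_F
\end{equation*}
is fully-faithful. Since each arrow in the sequence \eqref{eq:composed} is an $F$-linear additive functor that is the identity on objects and surjective on Hom-groups, it suffices to prove that the overall composition is injective on Hom-groups, i.e.\ that on $\NAM(k)_F$ the noncommutative numerical equivalence relation is trivial.

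For $Z = \Spec(L)$ and $Z' = \Spec(L')$ finite {\'e}tale $k$-schemes (so $L,L'$ are finite products of finite separable field extensions of $k$), the standard computation gives
\begin{equation*}
\Hom_{\NChow(k)_F}\bigl(\cU(\cD_\perf^\dg(Z)),\cU(\cD_\perf^\dg(Z'))\bigr) \;\simeq\; K_0(L\otimes_k L')_F.
\end{equation*}
Because finite {\'e}tale algebras are stable under $\otimes_k$, the algebra $L\otimes_k L' \simeq \prod_{ij} E_{ij}$ is a finite product of finite separable extensions $E_{ij}/k$, and so $K_0(L\otimes_k L')_F$ is a free $F$-module of rank equal to the number of connected components of $Z\times_k Z'$. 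This recovers the classical Hom-group in $\AM(k)_F$, consistent with the fully-faithfulness provided by Theorem~\ref{thm:main1}.

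The remaining step is the main content of the proof: one has to verify that the noncommutative numerical pairing on $K_0(L\otimes_k L')_F \times K_0(L'\otimes_k L)_F$, built out of composition followed by the Euler characteristic of the resulting endomorphism, is non-degenerate. Using the decomposition $L\otimes_k L' \simeq \prod E_{ij}$ one reduces this to checking that the analogous bilinear form on each $K_0(E_{ij})_F \simeq F$ is non-zero; this last assertion is the computation of the Euler characteristic $\chi_k(E_{ij})=[E_{ij}:k]$, which is non-zero precisely because $E_{ij}/k$ is finite and separable. Hence numerical equivalence is trivial on the Hom-groups of $\NAM(k)_F$, and the composed functor is fully-faithful.

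The step that will require the most care is the final one: one must identify the abstract noncommutative numerical pairing of \cite{Kontsevich,Galois,uncond} with the concrete trace/dimension pairing on each semisimple factor $E_{ij}$, and then conclude non-degeneracy from separability. Once this is established, the argument is essentially formal: full-faithfulness of the outermost arrow forces full-faithfulness of each intermediate restriction, since every map in \eqref{eq:composed} is surjective on Hom-groups when restricted to $\NAM(k)_F$.
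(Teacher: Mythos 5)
Your proof is correct, but it takes a genuinely different route from the paper's. The paper does \emph{not} compute anything in the noncommutative world directly. Instead it uses the commutative diagram~\eqref{eq:diagram-notation} (which sits the bridge functor $\Phi$ inside the orbit-category framework at both the Chow and numerical levels, a compatibility taken from \cite{Semi,Galois}), observes that on the commutative side $\AM(k)_F \hookrightarrow \Chow(k)_F \to \Num(k)_F$ is already fully-faithful because Artin motives are supported in dimension zero so all classical adequate equivalence relations coincide on them, rerun the orbit-category fullness computation from the proof of Theorem~\ref{thm:main1}, and then transport the result across $R_{\cN}$ (which is fully-faithful). Your argument instead works entirely on the noncommutative side: identify $\Hom_{\NChow(k)_F}(\cU(\cD^{\dg}_{\perf}(Z)),\cU(\cD^{\dg}_{\perf}(Z')))$ with $K_0(L\otimes_k L')_F$, decompose the {\'e}tale $k$-algebra $L\otimes_k L'\simeq\prod E_{ij}$, and verify non-degeneracy of the Euler pairing $\chi$ factor by factor. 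Your route buys a concrete, self-contained calculation that does not invoke the compatibility of $\Phi$ and $\Phi_{\cN}$ with the orbit-category construction; the paper's route buys a shorter argument at the price of citing that compatibility as a black box.

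One small inaccuracy worth fixing: you attribute $[E_{ij}:k]\neq 0$ to the fact that $E_{ij}/k$ is ``finite and separable.'' The non-vanishing is purely a consequence of finiteness (together with $\mathrm{char}\,F=0$, which is built into the conventions). Separability is used earlier and for a different purpose: it is what guarantees $L\otimes_k L'$ is a finite product of \emph{fields} (rather than having nilpotents), so that $K_0(L\otimes_k L')_F$ is free of the stated rank and the pairing $\chi$ is diagonal in the basis $\{[E_{ij}]\}$. You should also make explicit that the off-diagonal entries $\chi([E_{ij}],[E_{i'j'}])$ vanish for $(i,j)\neq(i',j')$ because there are no nonzero maps between modules supported on different factors of the product; that is what reduces non-degeneracy to the diagonal computation.
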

As proved in \cite{Semi,Kontsevich,Galois}, the category $\NNum(k)_F$ is abelian semi-simple when $k$ and $F$ have the same characteristic. Assuming the noncommutative standard conjecture $C_{NC}$ (= K{\"u}nneth) and that $k$ is of characteristic zero, this category can be made into a Tannakian category $\NNum^\dagger(k)_F$ by modifying its symmetry isomorphism constraints; see \cite[Thm.~1.4]{Galois}. Moreover, assuming the noncommutative standard conjecture $D_{NC}$ (= homological equals numerical) and that $F$ is a field extension of $k$, $\NNum^\dagger(k)_F$ becomes a neutral Tannakian category with fiber functor given by periodic cyclic homology; see \cite[Thm.~1.6]{Galois}. As a consequence, one obtains well-defined {\em noncommutative motivic Galois groups} $\mathrm{Gal}(\NNum^\dagger(k)_F)$, which are moreover pro-reductive (\ie their unipotent radicals are trivial) since $\NNum^\dagger(k)_F$ is semi-simple.
\begin{theorem}\label{thm:main3}
\begin{itemize}
\item[(i)] We have an equality $\NAM^\dagger(k)_F = \NAM(k)_F$ of categories;
\item[(ii)] The inclusion $\NAM(k)_F \subset \NNum^\dagger(k)_F$ gives rise to a surjection
\begin{equation*}
\mathrm{Gal}(\NNum^\dagger(k)_F) \twoheadrightarrow \mathrm{Gal}(\NAM(k)_F)\,.
\end{equation*}
\item[(iii)] The noncommutative motivic Galois group $\mathrm{Gal}(\NAM(k)_F)$ is naturally isomorphism to the absolute Galois group of $k$.
\end{itemize}
\end{theorem}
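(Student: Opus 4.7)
The plan is to reduce each of the three parts to known facts about the classical category $\AM(k)_F$ by transport of structure along the equivalence $\Phi\colon \AM(k)_F \isotoo \NAM(k)_F$ furnished by Theorem~\ref{thm:main1}, combined with Theorem~\ref{thm:main2}.

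For part (i), my first task is to show that the sign modification of the symmetry isomorphism constraints is trivial on $\NAM(k)_F$. The modification is defined globally on $\NNum(k)_F$ using the noncommutative K\"unneth projectors provided by conjecture $C_{NC}$. Under $\Phi$ these projectors correspond, by compatibility of the bridge with the standard cohomology theories, to the usual K\"unneth components on $\AM(k)_F$. Every $M(Z)$ with $Z$ finite \'etale is concentrated in weight zero, so the only nonzero K\"unneth component is the identity; consequently the twist restricts to the identity and $\NAM^\dagger(k)_F = \NAM(k)_F$ as rigid symmetric monoidal categories.

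For part (ii), I would appeal to the standard Tannakian principle of Deligne-Milne~\cite{DelMil}: a full tensor subcategory of a neutral Tannakian category that is closed under subobjects is dual to a faithfully flat homomorphism of the associated affine group schemes. By Theorem~\ref{thm:main2} the inclusion $\NAM(k)_F \hookrightarrow \NNum^\dagger(k)_F$ is fully faithful, and via $\Phi$ together with part (i) it corresponds to the inclusion $\AM(k)_F \subset \Num^\dagger(k)_F$ of an abelian semisimple subcategory, which is well known to be closed under subobjects (see~\cite[\S6.2]{Andre}). The desired surjection of pro-reductive groups then follows formally.

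For part (iii), I would combine Theorem~\ref{thm:main1} with part (i) to obtain a tensor equivalence $\AM(k)_F \simeq \NAM(k)_F$ of neutral Tannakian categories, and then transport the classical identification $\mathrm{Gal}(\AM(k)_F) \cong \mathrm{Gal}(\overline{k}/k)$ (Grothendieck; see~\cite[\S6.2.5]{Andre}) across it. The nontrivial verification is that the two fiber functors correspond under $\Phi$: on $\NAM(k)_F$ one uses periodic cyclic homology, while on $\AM(k)_F$ one uses the functor sending $M(Z)$ to the $F$-vector space spanned by the geometric points $Z(\overline{k})$. For a finite \'etale $k$-algebra $A = \cO(Z)$ one computes $HP_0(A) = A$ and $HP_1(A) = 0$, and after base change to $\overline{k}$ this yields $A \otimes_k \overline{k} \cong \overline{k}^{Z(\overline{k})}$ equipped with its natural $\mathrm{Gal}(\overline{k}/k)$-action. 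I expect the main obstacle to be checking that this matching is genuinely Galois-equivariant; this should follow from the functoriality of $HP$ along the bridge $\Phi$ together with the base-change formalism developed elsewhere in the paper.
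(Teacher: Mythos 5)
Your proposal follows essentially the same route as the paper. For part (i) both arguments reduce to the vanishing of $H^n_{dR}(Z)$ for $n \neq 0$ when $Z$ is zero-dimensional, so that $HP_\ast^- = 0$ and the noncommutative K\"unneth projector $\underline{\pi}^+_{\cU(\cD_\perf^\dg(Z))}$ is the identity, making the twist $e_M = 2\underline{\pi}^+_M - \id_M$ trivial on $\NAM(k)_F$. For part (ii) both appeal to the Tannakian criterion for a full abelian tensor subcategory closed under subobjects. For part (iii) both transport the Galois--Grothendieck identification across the equivalence $\Phi\colon \AM(k)_F \simeq \NAM(k)_F$ by matching the fiber functor $\overline{HP_\ast}$ with the classical one $Z \mapsto F^{Z(\overline{k})}$ via the de Rham computation $HP_\ast(\cD_\perf^\dg(Z)) \simeq H_{dR}(Z) \simeq \cO(Z)$; your observation that $HP_0(A) = A$, $HP_1(A) = 0$ for $A$ \'etale is precisely this, phrased through HKR rather than through \cite[Remark~6.18]{DelMil}. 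Two small corrections to your framing of (iii): first, the verification is not one of \emph{Galois-equivariance} --- the fiber functor lands in $\mathrm{Vect}(F)$, with no Galois action in sight --- but rather that $\overline{HP_\ast}\circ\Phi$ and $M(Z)\mapsto F^{Z(\overline{k})}$ are isomorphic as $\otimes$-functors to $\mathrm{Vect}(F)$, so that their $\underline{\mathrm{Aut}}^\otimes$ agree; second, the base-change formalism of \S\ref{sec:basechange} is not used here (it only enters in the proof of Theorem~\ref{thm:main4}); what is needed is just the identification of $\overline{HP_\ast}\circ\Phi$ with the $2$-periodization of de Rham cohomology from the proof of \cite[Thm.~1.3]{Galois}.
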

Item (i) answers the second part of Question II, while item (iii) provides an affirmative answer to Question III. Hence, as in the commutative world, $\mathrm{Gal}(\NNum^\dagger(k)_F)$ can also be understood as an ``higher dimensional Galois group''. Its precise relationship with the classical motivic Galois group and with the multiplicative group $\bbG_m$ is described in \cite[Thm.~1.7]{Galois}. Our answer to Question IV is the following:
\begin{theorem}\label{thm:main4}
We have a short exact sequence
\begin{equation}\label{eq:short-new}
 1 \to \mathrm{Gal}(\NNum^\dagger(\overline{k})_F) \stackrel{I}{\too} \mathrm{Gal}(\NNum^\dagger(k)_F) \stackrel{P}{\too} \mathrm{Gal}(\overline{k}/k) \to 1\,.
\end{equation}
The map $P$ is induced by the inclusion $\NAM(k)_F \subset \NNum^\dagger(k)_F$ and the map $I$ by the base-change functor $-\otimes_k \overline{k}: \NNum^\dagger(k)_F \to \NNum^\dagger(\overline{k})_F$; see \S\ref{sec:basechange}.
\end{theorem}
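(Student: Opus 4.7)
The plan is to reduce Theorem~\ref{thm:main4} to the general Tannakian criterion for short exact sequences of Galois groups that is announced for the appendix. That criterion takes as input a fully faithful inclusion $\cC_1 \subset \cC$ of a Tannakian subcategory closed under subobjects together with a $\otimes$-functor $\omega \colon \cC \to \cC_0$ to another neutral Tannakian category, and produces a short exact sequence
\begin{equation*}
1 \to \mathrm{Gal}(\cC_0) \to \mathrm{Gal}(\cC) \to \mathrm{Gal}(\cC_1) \to 1
\end{equation*}
provided that: (a) $\omega$ sends every object of $\cC_1$ to a trivial object; (b) conversely, any object of $\cC$ trivialized by $\omega$ already belongs to $\cC_1$; and (c) every object of $\cC_0$ is a subquotient of some $\omega(N)$ with $N \in \cC$.

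I would apply this with $\cC = \NNum^\dagger(k)_F$, $\cC_1 = \NAM(k)_F$, $\cC_0 = \NNum^\dagger(\overline{k})_F$, and $\omega = -\otimes_k \overline{k}$ the base-change $\otimes$-functor constructed in \S\ref{sec:basechange}. Full faithfulness of $\NAM(k)_F \hookrightarrow \NNum^\dagger(k)_F$ is furnished by Theorem~\ref{thm:main2} together with Theorem~\ref{thm:main3}(i); closure under subobjects is a consequence of semi-simplicity plus the fact that a direct summand of a noncommutative Artin motive is again a noncommutative Artin motive, which one transports to $\Chow(k)_F$ using the equivalence of Theorem~\ref{thm:main1}. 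The identification of the resulting quotient $\mathrm{Gal}(\NAM(k)_F)$ with $\mathrm{Gal}(\overline{k}/k)$ is precisely Theorem~\ref{thm:main3}(iii), and this produces the surjection $P$ of \eqref{eq:short-new}.

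Condition (a) reduces, via the commutative square \eqref{eq:bridge} and Theorem~\ref{thm:main1}, to the classical fact that a finite \'etale $k$-scheme $Z$ becomes a disjoint union of copies of $\Spec(\overline{k})$ after base change, hence its associated noncommutative motive becomes a sum of copies of the tensor unit. Condition (c) is a descent-of-scalars statement at the level of saturated dg categories: every saturated dg $\overline{k}$-category should be, up to Morita equivalence, the base change of a saturated dg category over a suitable finite extension of $k$; applying $\cU$ and invoking additivity, every object of $\NNum^\dagger(\overline{k})_F$ then appears as a direct summand of $\omega(N)$ for some $N$ in $\NNum^\dagger(k)_F$.

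The main obstacle is condition (b): given $N \in \NNum^\dagger(k)_F$ with $N\otimes_k\overline{k}$ trivial, one must exhibit $N$ as a noncommutative Artin motive. The plan is to use the base-change formalism to package such a trivialization together with its $\mathrm{Gal}(\overline{k}/k)$-conjugates into a Galois descent datum, whose invariants recover an object defined by the spectrum of a finite \'etale $k$-algebra; Theorem~\ref{thm:main1} then re-interprets this object inside $\NAM(k)_F$. Making this descent argument rigorous in the noncommutative setting, where one lacks an underlying scheme, is the technical heart of the argument, and it is here that the refined bridge of Theorem~\ref{thm:main2} — ensuring that all relevant fully-faithfulness persists through the sequence \eqref{eq:composed} — is essential.
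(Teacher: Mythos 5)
Your conditions (a) and (c) correctly match assumptions (A1) and (A2) in the paper's appendix, and the reduction of (a) to the splitting of finite \'etale schemes over $\overline{k}$ and of (c) to a descent-of-scalars statement (which the paper verifies via To\"en's theorem and the colimit argument of Proposition~\ref{prop:colim}) is essentially what the paper does. The surjectivity of $P$ and its identification with $\mathrm{Gal}(\overline{k}/k)$ via Theorem~\ref{thm:main3} also matches.

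However, your condition (b) --- that any object of $\NNum^\dagger(k)_F$ trivialized by $-\otimes_k\overline{k}$ already lies in $\NAM(k)_F$, to be verified by assembling a Galois descent datum ``whose invariants recover an object defined by the spectrum of a finite \'etale $k$-algebra'' --- is precisely the step the paper deliberately avoids, and this is a genuine gap. In \S\ref{sub:new-proof} the authors point out that Deligne--Milne's argument for exactness at the middle term ``is based on the interpretation of $\Hom(N_{\overline{k}},N'_{\overline{k}})$ as an Artin motive,'' and they state explicitly that this is ``the key step in Deligne--Milne's proof that doesn't seem to admit a noncommutative analogue.'' There is no underlying scheme attached to a noncommutative numerical motive, so there is no spectrum to recover from descent data, and Theorem~\ref{thm:main2} (fully-faithfulness of $\NAM(k)_F$ inside $\NNum(k)_F$) does not supply the missing geometry. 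What the paper does instead for $\mathrm{Ker}(P)\subseteq\mathrm{Im}(I)$ is replace your condition (b) with a structural \emph{monadicity} hypothesis (assumption (A3)/(A3')): the adjunction $-\otimes_k\overline{k}\dashv(-)_k$ on noncommutative numerical motives is shown to be monadic (Proposition~\ref{prop:monadic}, using semi-simplicity and conservativity of restriction), so every object of $\NNum^\dagger(\overline{k})_F$ admits the canonical coequalizer presentation \eqref{eq:cok}; then the automorphism $\varphi$ of the fiber functor over $k$, trivial on $\NAM(k)_F$, is lifted by hand to an automorphism $\overline{\varphi}$ over $\overline{k}$ via this presentation, using the key fact that $\overline{k}_k$ is a noncommutative Artin motive and hence $\varphi_{\overline{k}_k}=\id$ (Lemma~\ref{lem:auxiliar}). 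The infinite case is handled by Proposition~\ref{prop:colim}. So the paper's proof is \emph{not} an instance of the classical characterization-of-the-kernel criterion but a genuinely different, adjunction-theoretic one; you should replace your condition (b) by the monadic presentation argument, which is the technical heart you left unresolved.
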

Deligne-Milne's proof of the short exact sequence \eqref{eq:shortexact} makes full use of ``commutative arguments'' which don't seem to admit noncommutative analogues. The proof of Theorem~\ref{thm:main4} is hence not only different but moreover much more conceptual from a categorical viewpoint; see \S\ref{sec:proof4}. By extracting the key ingredients of this latter proof we have established in Appendix~\ref{appendix:Galois} a general result about short exact sequence of Galois groups which is of independent interest; see Theorem~\ref{thm:general}. As an application, we obtain a new proof of \eqref{eq:shortexact} which circumvents the ``commutative arguments'' of Deligne-Milne; see \S\ref{sub:new-proof}.

The theory of base-change in the noncommutative world (needed for the proof of Theorem~\ref{thm:main4}) was developed in \S\ref{sub:basechange1}-\ref{sub:basechange2}. Its compatibility with the classical base-change mechanism is proved in \S\ref{sub:compatibility}. As an application, we obtain new tools for the study of motivic decompositions and Schur/Kimura finiteness. Concretely, Corollaries~\ref{cor:application-1} and \ref{cor:application-2} can be used in order to prove that certain Chow motives are of Lefschetz type or Schur/Kimura-finite. 

Finally, our last result in the pure world describes a precise relationship between Deligne-Milne's short exact sequence \eqref{eq:shortexact} and its noncommutative analogue \eqref{eq:short-new}.
\begin{theorem}\label{thm:diagram}
We have a well-defined commutative diagram
\begin{equation}\label{eq:diagram}
\xymatrix@C=1.5em@R=2em{
& 1 & 1 & & \\
& \bbG_m \ar[u] \ar@{=}[r] & \bbG_m \ar[u] & & \\
1 \ar[r] & \mathrm{Gal}(\Num^\dagger(\overline{k})_F) \ar[r]^-I \ar[u]  \ar@{}[dr]|{\llcorner} & \mathrm{Gal}(\Num^\dagger(k)_F) \ar[r]^-P \ar[u] & \mathrm{Gal}(\overline{k}/k) \ar[r] & 1 \\
1 \ar[r] & \mathrm{Gal}(\NNum^\dagger(\overline{k})_F)  \ar[u] \ar[r]_-I & \mathrm{Gal}(\NNum^\dagger(k)_F) \ar[u] \ar[r]_-P & \mathrm{Gal}(\overline{k}/k) \ar[r] \ar@{=}[u] &1\,, 
}
\end{equation}
where each row and each column is exact. In particular, the lower left-hand-side square is cartesian.
\end{theorem}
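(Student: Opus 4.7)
The top row of \eqref{eq:diagram} is the Deligne--Milne sequence \eqref{eq:shortexact} and the bottom row is the sequence \eqref{eq:short-new} of Theorem~\ref{thm:main4}; the rightmost vertical identity is tautological. The content to be established is therefore the construction of the two middle vertical columns, their exactness, the commutativity of the two squares, and the Cartesian property of the lower-left square. The plan is to proceed in four steps.

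First, I would construct the vertical arrows. The $\otimes$-functor $\Phi$ of diagram \eqref{eq:bridge} extends along the sequence \eqref{eq:composed} (and its analogue over $\overline{k}$) and through the modification of symmetry constraints required to enter the Tannakian setting, producing symmetric monoidal functors $\Phi:\Num^\dagger(-)_F\to\NNum^\dagger(-)_F$ for $-=k,\overline{k}$. Tannakian duality then yields the two upward vertical maps. The exactness of each of the two middle columns (including the appearance of $\bbG_m$ as the cokernel) is precisely the content of \cite[Thm.~1.7]{Galois} applied over $k$ and over $\overline{k}$; in particular the vertical arrow $\mathrm{Gal}(\NNum^\dagger(-)_F)\to\mathrm{Gal}(\Num^\dagger(-)_F)$ is injective with cokernel $\bbG_m$.

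Second, I would verify commutativity of both squares. For the right square, both $P$ maps are by construction induced by the inclusions $\AM(k)_F\subset\Num^\dagger(k)_F$ and $\NAM(k)_F\subset\NNum^\dagger(k)_F$, where we use Theorem~\ref{thm:main3}(i) to identify $\NAM^\dagger(k)_F=\NAM(k)_F$. By Theorems~\ref{thm:main1} and \ref{thm:main2}, $\Phi$ restricts to an equivalence $\AM(k)_F\simeq \NAM(k)_F$, so the diagram of Tannakian inclusions is commutative, and Tannakian duality delivers the commutativity of the right square. For the left square both $I$ maps are induced by base-change $-\otimes_k\overline{k}$, and the compatibility of the classical and noncommutative base-change functors with $\Phi$ is established in \S\ref{sub:compatibility}; dualizing this compatibility gives the desired commutativity.

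Third, for the Cartesian property of the lower-left square: given a compatible pair $(x,y)\in\mathrm{Gal}(\Num^\dagger(\overline{k})_F)\times_{\mathrm{Gal}(\Num^\dagger(k)_F)}\mathrm{Gal}(\NNum^\dagger(k)_F)$, the image $P(y)\in\mathrm{Gal}(\overline{k}/k)$ agrees, by right-square commutativity, with the image of $x$ in $\mathrm{Gal}(\overline{k}/k)$ through the top row, which is trivial by exactness of the top row. Hence, by exactness of the bottom row, $y$ lifts uniquely to some $\tilde y\in\mathrm{Gal}(\NNum^\dagger(\overline{k})_F)$. Commutativity of the left square shows that the image of $\tilde y$ in $\mathrm{Gal}(\Num^\dagger(\overline{k})_F)$ and $x$ have the same image in $\mathrm{Gal}(\Num^\dagger(k)_F)$, so their difference lies in the kernel of the left-middle vertical; but this kernel is trivial by the injectivity recorded in step one. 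This pins down the lift uniquely, yielding the Cartesian property.

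The main obstacle. All of the genuine technical content is hidden in the compatibility invoked for the left square: namely, the construction of a noncommutative base-change functor on $\NNum^\dagger$ and the proof that it intertwines, under $\Phi$, with the classical base-change on $\Num^\dagger$. This is the substantial investment made in \S\ref{sub:basechange1}--\ref{sub:compatibility}; once it is in place, Theorem~\ref{thm:diagram} reduces to a straightforward Tannakian translation combined with the diagram chase sketched above.
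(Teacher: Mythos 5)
Your plan correctly identifies the two rows as \eqref{eq:shortexact} and \eqref{eq:short-new}, and your treatment of the right square (via the equivalence $\AM(k)_F\simeq\NAM(k)_F$) and of the lower-left square (via the compatibility of base-change in \S\ref{sub:compatibility}) matches the paper's. However, three points need attention.

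First, you treat ``both squares,'' but diagram \eqref{eq:diagram} contains \emph{three} squares; you never address the uppermost one, between the two copies of $\bbG_m$ and the top $I$. The paper handles this by recalling from \cite[\S11]{Galois} that the motivic Galois group of the Tate subcategory is determined by its evaluation at $\bbQ(1)$, and that the base-change functor $\Num^\dagger(k)_F\to\Num^\dagger(\overline{k})_F$ sends $\bbQ(1)$ to $\bbQ(1)$; this is what makes the $\bbG_m=\bbG_m$ square commute. Without this, the assertion that the columns are \emph{compatible} exact sequences (rather than merely two separate exact sequences) is not established.

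Second, your appeal to Theorem~\ref{thm:compatibility} for the lower-left square is valid only when $\overline{k}/k$ is \emph{finite}; Theorem~\ref{thm:compatibility} is stated under that hypothesis. For $k$ with $\overline{k}/k$ infinite (e.g.\ $k=\bbQ$), the paper constructs the lower-left square by a limit procedure over the filtrant system of intermediate finite extensions $k_i/k$, using the injectivity of the transition maps and Proposition~\ref{prop:colim}-type identifications to pass to the colimit. Your proof has no counterpart to this step, so as written it only proves the theorem when $\overline{k}/k$ is finite.

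Third, your diagram chase for the Cartesian property contains an error. After lifting $y$ to $\tilde y$, you claim the difference of $x$ and the image of $\tilde y$ in $\mathrm{Gal}(\Num^\dagger(\overline{k})_F)$ lies in ``the kernel of the left-middle vertical'' and is killed by an injectivity you ``recorded in step one.'' But (a) the relevant difference lies in the kernel of the \emph{horizontal} map $I:\mathrm{Gal}(\Num^\dagger(\overline{k})_F)\to\mathrm{Gal}(\Num^\dagger(k)_F)$, whose triviality follows from the exactness (injectivity of $I$) of Deligne--Milne's top row, not from any vertical; and (b) the injectivity of the vertical $\mathrm{Gal}(\NNum^\dagger(-)_F)\to\mathrm{Gal}(\Num^\dagger(-)_F)$ that you assert in step one is neither claimed in the statement of Theorem~\ref{thm:diagram} (note the columns are \emph{not} drawn with a $1\to$ at the bottom) nor established in the paper, and it is not needed. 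Once the difference is placed in the correct kernel, the standard chase goes through using only the exactness of the two rows.
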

Intuitively speaking, Theorem~\ref{thm:diagram} show us that the short exact sequence \eqref{eq:short-new} maps towards the short exact sequence \eqref{eq:shortexact} and that the default of surjectivity of this map is precisely the multiplicative group $\bbG_m$.
\subsection*{Noncommutative mixed Artin motives}
Recall from \cite[\S16]{Andre} the construction of the triangulated category $\DM_{gm}(k)_\bbQ$ of Voevodsky's mixed motives (over a perfect field $k$) and of the functor $M_{gm}: \mathrm{Sm}(k) \to \DM_{gm}(k)_\bbQ$ defined on smooth $k$-schemes. As explained in \cite[\S5]{survey}, this functor admits a noncommutative analogue $\cU_M: \sdgcat(k) \to \Mix(k)_\bbQ$ with values in the triangulated category of Kontsevich's mixed motives; see \S\ref{sub:mixed}. Hence, by performing the same constructions as above (using triangulated categories instead of additive ones), we obtain the classical category $\MAM(k)_\bbQ$ of mixed Artin motives (see \cite[\S1]{Wildehaus}) and also its noncommutative analogue $\NMAM(k)_\bbQ$, which we call the category of {\em noncommutative mixed Artin motives}. These categories are related as follows:

\begin{theorem}\label{thm:main5}
\begin{itemize}
\item[(i)]
There exists a faithful $\bbQ$-linear triangulated $\otimes$-functor $\Psi$ making the following diagram commute
\begin{equation}\label{eq:diag-mixed}
\xymatrix{
\{\textrm{finite {\'e}tale $k$-schemes}\}^\op \ar[d]_-{M_{gm}^\vee} \ar[r]^-{\cD_\perf^\dg(-)} & \sdgcat(k) \ar[d]^{\cU_M} \\
\MAM(k)_\bbQ \ar[r]_-{\Psi} & \NMAM(k)_\bbQ \subset \Mix(k)_\bbQ\,,  \qquad \qquad
}
\end{equation}
where $(-)^\vee$ stands for the duality functor. 
\item[(ii)]When $k=\bbF_q$ the functor $\Psi$ is moreover an equivalence. As a consequence $\NMAM(\bbF_q)_\bbQ \simeq \cD^b(\mathrm{Rep}_\bbQ(\widehat{\bbZ}))$, where $\mathrm{Rep}_\bbQ(\widehat{\bbZ})$ denotes the category of finite dimensional $\bbQ$-linear representations of $\widehat{\bbZ}:=\mathrm{lim}_n \bbZ/n\bbZ$.
\item[(iii)] When $\bbQ\subseteq k$ the functor $\Psi$ is {\em not} full. For instance, the non-trivial maps
\begin{eqnarray*}
\Hom_{\NMAM(\bbQ)_\bbQ}(\Psi M_{gm}^\vee(\mathrm{spec}(\bbQ)),\Psi M_{gm}^\vee(\mathrm{spec}(\bbQ))[-n])\simeq \bbQ & n \equiv 5 \,\,(\textrm{mod} \,\,8)&
\end{eqnarray*}
are not in the image of $\Psi$.
\end{itemize}
\end{theorem}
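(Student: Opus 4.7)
The plan is to prove the three parts separately by reducing each to an explicit computation of morphism groups on the generators $M_{gm}^\vee(Z)$ and $\cU_M(\cD_\perf^\dg(Z))$ with $Z$ finite \'etale. For part (i), I would first promote the pure bridge \eqref{eq:bridge} to a $\bbQ$-linear triangulated $\otimes$-functor $\Phi_M \colon \DM_{gm}(k)_\bbQ \to \Mix(k)_\bbQ$ with $\Phi_M(M_{gm}(Z)) \simeq \cU_M(\cD_\perf^\dg(Z))$ on $\SmProj(k)$, built from the universal property of the localizing invariant $\cU_M$. Setting $\Psi$ equal to the restriction of $\Phi_M$ to $\MAM(k)_\bbQ$, and using that $M_{gm}^\vee(Z) \simeq M_{gm}(Z)$ for $0$-dimensional $Z$, the diagram~\eqref{eq:diag-mixed} commutes and the image of $\Psi$ lies in $\NMAM(k)_\bbQ$. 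Faithfulness then reduces, via the thick-triangulated generation by finite \'etale generators, to injectivity of the comparison
\[
\Hom_{\MAM(k)_\bbQ}(M_{gm}^\vee(Z_1), M_{gm}^\vee(Z_2)[-n]) \too \Hom_{\NMAM(k)_\bbQ}(\cU_M\cD_\perf^\dg(Z_1), \cU_M\cD_\perf^\dg(Z_2)[-n]).
\]
Using the classical equivalence $\MAM(k)_\bbQ \simeq \cD^b(\Rep_\bbQ(\mathrm{Gal}(\overline{k}/k)))$, the left-hand side vanishes for $n \neq 0$, and for $n=0$ it equals $\bbQ^{\pi_0(Z_1 \times_k Z_2)}$ by Galois descent, matching $K_0(Z_1 \times_k Z_2)_\bbQ$ on the right; the comparison is the identity on this piece, giving injectivity.

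For part (ii), invoke Quillen's computation of the rational $K$-theory of finite fields: $K_n(\bbF_{q^r})_\bbQ = 0$ for $n > 0$. Consequently, for any finite \'etale $\bbF_q$-schemes $Z_1, Z_2$, one also has $K_n(Z_1 \times_{\bbF_q} Z_2)_\bbQ = 0$ for $n > 0$. Combined with the vanishing of the $\MAM$-side $\Hom$-groups in non-zero degrees established in (i), the functor $\Psi$ becomes fully faithful on generators, hence fully faithful on all of $\MAM(\bbF_q)_\bbQ$ by the standard thick-subcategory argument. Essential surjectivity onto $\NMAM(\bbF_q)_\bbQ$ is immediate from the definition. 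The identification $\NMAM(\bbF_q)_\bbQ \simeq \cD^b(\Rep_\bbQ(\widehat{\bbZ}))$ then follows by transporting the classical equivalence $\MAM(\bbF_q)_\bbQ \simeq \cD^b(\Rep_\bbQ(\widehat{\bbZ}))$ across $\Psi$; this latter equivalence reflects $\mathrm{Gal}(\overline{\bbF_q}/\bbF_q) \simeq \widehat{\bbZ}$.

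For part (iii), specialize the $\Hom$-computation of (i) to $k = \bbQ$ and $Z_1 = Z_2 = \Spec(\bbQ)$. The $\NMAM$-side morphism group equals $K_n(\bbQ)_\bbQ$, which by Borel's theorem on the rational higher $K$-theory of $\bbZ$ (together with $K_n(\bbQ)_\bbQ = K_n(\bbZ)_\bbQ$ for $n \geq 2$) is isomorphic to $\bbQ$ for every $n \equiv 1 \pmod 4$ with $n \geq 5$, in particular for $n \equiv 5 \pmod 8$. On the $\MAM$-side the corresponding group vanishes for $n > 0$. Hence the Borel classes provide non-trivial elements in the $\NMAM$-side $\Hom$-group which cannot lie in the image of $\Psi$, witnessing the failure of fullness.

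The main obstacle is part (i): constructing $\Phi_M$ as a symmetric monoidal triangulated functor lifting the pure bridge (and not merely as an additive functor), and rigorously identifying the $\Hom$-groups in $\Mix(k)_\bbQ$ between $\cU_M(\cD_\perf^\dg(Z_1))$ and $\cU_M(\cD_\perf^\dg(Z_2))$ with rational algebraic $K$-theory of $Z_1 \times_k Z_2$. Once these foundational ingredients are in place, the computations in parts (ii) and (iii) reduce to direct applications of Quillen's and Borel's theorems.
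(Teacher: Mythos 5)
Your computations in parts (ii) and (iii) are essentially correct and match the paper's arguments: for (ii) you replace the paper's appeal to finiteness of $K$-groups of finite rings (Weibel's book, Prop.~1.16) by Quillen's theorem, which is fine since either shows the higher rational $K$-groups vanish; for (iii) you invoke Borel's theorem where the paper quotes the explicit fact $K_n(\bbQ)\simeq\bbZ$ for $n\equiv 5\ (\mathrm{mod}\ 8)$ from Weibel's handbook, again equivalent. You also correctly identify that the crucial computational input is the identification of $\Hom_{\Mix(k)_\bbQ}(\cU_M\cD_\perf^\dg(Z_1),\cU_M\cD_\perf^\dg(Z_2)[-n])$ with $K_n(Z_1\times_k Z_2)_\bbQ$, which the paper obtains from \cite[Prop.~9.3]{CT1}.

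The genuine gap is in your construction of $\Psi$ in part (i). You propose to first build a triangulated $\otimes$-functor $\Phi_M\colon \DM_{gm}(k)_\bbQ\to\Mix(k)_\bbQ$ ``from the universal property of the localizing invariant $\cU_M$,'' and then restrict. But the functor $\cU_M$ used in this paper is built from \emph{connective} algebraic $K$-theory (see \S\ref{sub:mixed} and the accompanying footnote): it is an additive invariant, not a localizing one, so its universal property only produces a functor out of an additive noncommutative motive category, not a triangulated functor on $\DM_{gm}(k)_\bbQ$. Producing such a ``mixed bridge'' with a symmetric monoidal and triangulated structure compatible with the pure bridge is a substantially harder foundational task, and is precisely what the paper sidesteps. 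The paper instead observes that $\MAM(k)_\bbQ\simeq\cD^b(\AM(k)_\bbQ)$ and, since $\AM(k)_\bbQ$ is abelian semi-simple, this is just $\mathrm{Gr}^b_\bbZ(\AM(k)_\bbQ)$; it then defines $\Psi(\{N_n\}_n):=\bigoplus_n\Omega(N_n)[n]$ by hand, where $\Omega=\Upsilon\circ\Phi|_{\AM(k)_\bbQ}$ and $\Upsilon\colon\NChow(k)_\bbQ\hookrightarrow\Mix(k)_\bbQ$ is the fully faithful embedding of \cite[Prop.~4.1]{Weight}. With this explicit formula, $\Psi$ is visibly $\bbQ$-linear, triangulated, $\otimes$, and faithful (faithfulness of $\Omega$ being Theorem~\ref{thm:main2}), with no need for a Hom computation or a conjectural mixed bridge. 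Your subsequent Hom computation to establish faithfulness would in fact work, but it is unnecessary once $\Psi$ is constructed this way, and it cannot substitute for the missing construction of $\Phi_M$ itself. I would strongly recommend replacing your construction of $\Psi$ by the paper's semi-simplicity argument, which is both simpler and avoids the unproved existence of $\Phi_M$.
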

In contrast with the world of pure motives, Theorem~\ref{thm:main5} shows us that in characteristic zero the category $\NMAM(k)_\bbQ$ is much richer than $\MAM(k)_\bbQ$. Informally speaking, $\NMAM(k)_\bbQ$ keeps track (in terms of Ext-groups) of {\em all} the higher algebraic $K$-theory of finite {\'e}tale $k$-schemes.
\subsection*{Future research}
Let us denote $\AM(k)^{\mathrm{pro}}_\bbQ$ the category of pro-objects in $\AM(k)_\bbQ$. Recall from \cite{CM,CCM,Mar,Yalk} that an {\em algebraic endomotive} consists of a triple $(M,S,\mu)$, where $M$ is an object of $\AM(k)_\bbQ^{\mathrm{pro}}$, $S$ is a countable abelian semigroup contained in $\End_{\AM(k)_\bbQ^{\mathrm{pro}}}(M)$, and $\mu:S \to \bbN$ is a semigroup homomorphism. Recall also from {\em loc. cit.} that every algebraic endomotive $(M,S,\mu)$ gives rise to a ``Riemann'' zeta function
$$ Z((M,S,\mu);\beta):= \sum_{s \in S} \mu(s)^{-\beta}\,.$$
Thanks to the above Theorem~\ref{thm:main2} one obtains the same theory of algebraic endomotives if one replaces $\AM(k)_\bbQ^{\mathrm{pro}}$ by $\NAM(k)_\bbQ^{\mathrm{pro}}$. On the other hand, Theorem~\ref{thm:main5}(iii) shows us that if one replaces $\AM(k)_\bbQ^{\mathrm{pro}}$ by $\NMAM(k)_\bbQ^{\mathrm{pro}}$ (with $k$ of characteristic zero), then one obtains a richer theory of algebraic endomotives. In particular, all the higher algebraic $K$-theory of finite {\'e}tale $k$-schemes is available in this new theory. This lead us naturally to the following questions which are the subject of future research.

\vspace{0.1cm}

{\it Question: Can the higher algebraic $K$-theory of finite {\'e}tale $k$-schemes be used to construct an interesting enrichment of the Bost-Connes endomotive ?}

{\it Question: Which geometric and arithmetic information can be captured by the new zeta functions ?}

\medbreak

\noindent\textbf{Acknowledgments:} The authors are very grateful to Michael Artin and Yuri Manin for motivating questions, to Joseph Ayoub, Dmitry Kaledin and Burt Totaro for fruitful discussions, to Bernhard Keller for precise comments on a previous draft, and to Yves Andr{\'e} and Bruno Kahn for useful e-mail exchanges. They are also grateful to the anonymous referee for his/her comments.

\medbreak

\noindent\textit{Conventions:}  Throughout the article we will reserve the letter $k$ for the base field and the letter $F$ for the field of coefficients. The pseudo-abelian envelope construction will be denoted by $(-)^\natural$ and all adjunctions will be displayed vertically with the left (resp. right) adjoint on the left (right) hand-side.
\section{Differential graded categories}\label{sec:dg}
For a survey article on dg categories consult Keller's ICM address \cite{ICM} . Let $\cC(k)$ the category of (unbounded) cochain complexes of $k$-vector spaces. A {\em differential graded (=dg) category $\cA$} is a category enriched over $\cC(k)$. Concretely, the morphisms sets $\cA(x,y)$ are complexes of $k$-vector spaces and the composition law fulfills the Leibniz rule: $d(f\circ g)=d(f)\circ g+(-1)^{\textrm{deg}(f)}f\circ d(g)$. The category of dg categories will be denoted by $\dgcat(k)$.

Let $\cA$ be a (fixed) dg category. Its {\em opposite} dg category $\cA^\op$ has the same objects and complexes of morphisms given by $\cA^\op(x,y):=\cA(y,x)$. The $k$-linear category $\dgHo(\cA)$ has the same objects as $\cA$ and morphisms given by $\dgHo(\cA)(x,y):= \textrm{H}^0\cA(x,y)$, where $\mathrm{H}^0$ denotes $0^{\mathrm{th}}$ cohomology.
\begin{definition}\label{def:quasi-eq}
A dg functor $G:\cA \to \cB$ is a called a {\em quasi-equivalence} if:
\begin{itemize}
\item[(i)] we have a quasi-isomorphism $\cA(x,y) \stackrel{\sim}{\to} \cB(Gx,Gy)$ for all objects $x$ and $y$;
\item[(ii)] the associated functor $\dgHo(G): \dgHo(\cA) \to \dgHo(\cB)$ is essentially surjective.
\end{itemize}
\end{definition}
A {\em right dg $\cA$-module} $X$ (or simply a $\cA$-module) is a dg functor $X: \cA^\op \to \cC_\dg(k)$ with values in the dg category of complexes of $k$-vector spaces. We will denote by $\cC(\cA)$ the category of $\cA$-modules. Recall from \cite[\S3]{ICM} that $\cC(\cA)$ carries a {\em projective} model structure. Moreover, the differential graded structure of $\cC_\dg(k)$ makes $\cC(\cA)$ naturally into a dg category $\cC_\dg(\cA)$. Let $\cD(\cA)$ be the {\em derived category} of $\cA$, \ie the localization of $\cC(\cA)$ with respect to the class of quasi-isomorphisms. Its full triangulated subcategory of compact objects (see \cite[Def.~4.2.7]{Neeman}) will be denoted by $\cD_c(\cA)$.
\begin{definition}
A dg functor $G:\cA \to \cB$ is called a {\em derived Morita equivalence} if the (derived) extension of scalars functor $\bbL G_!:\cD_c(\cA) \stackrel{\sim}{\to} \cD_c(\cB)$ is an equivalence of categories; see \cite[\S4.6]{ICM}.
\end{definition}
\begin{notation}\label{not:Yoneda}
We will denote by $\widehat{\cA}$ the full dg subcategory of $\cC_\dg(\cA)$ consisting of those cofibrant $\cA$-modules which become compact in $\cD(\cA)$. Note that $\dgHo(\widehat{\cA})\simeq \cD_c(\cA)$. The assignment $\cA \mapsto \widehat{\cA}$ is functorial and we have a Yoneda dg functor $h:\cA \to \widehat{\cA}, x \mapsto \cA(-,x)$.
\end{notation}

\begin{proposition}\label{prop:char}
The following assertions are equivalent:
\begin{itemize}
\item[(i)] A dg functor $G:\cA \to \cB$ is a derived Morita equivalence;
\item[(ii)] The associated dg functor $\widehat{G}: \widehat{\cA} \to \widehat{\cB}$ is a quasi-equivalence;
\item[(iii)] The dg functor $G$ satisfies condition (i) of Definition~\ref{def:quasi-eq} and every object of $\dgHo(\cB) \subset \cD_c(\cB)$ is in the essential image of the functor $\bbL G_!: \cD_c(\cA) \to \cD_c(\cB)$.
\end{itemize}
\end{proposition}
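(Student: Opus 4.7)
The plan is to prove the cycle (i) $\Leftrightarrow$ (ii) $\Rightarrow$ (iii) $\Rightarrow$ (i), using throughout the identification $\dgHo(\widehat{\cA}) \simeq \cD_c(\cA)$ of Notation~\ref{not:Yoneda} and the fact that, under the Yoneda embedding $h: \cA \to \widehat{\cA}$, the $\cB$-module $\widehat{G}(h_x)$ is (cofibrantly) quasi-isomorphic to $\bbL G_!(h_x) \simeq h_{Gx}$.

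For (i) $\Leftrightarrow$ (ii), the point is that a quasi-equivalence of dg categories between $\widehat{\cA}$ and $\widehat{\cB}$ amounts to (a) essential surjectivity of the induced functor on $\dgHo$ — which via $\dgHo(\widehat{-}) \simeq \cD_c(-)$ is essential surjectivity of $\bbL G_!$ — and (b) a quasi-isomorphism on hom-complexes, which by the standard computation $\dgHo(\widehat{\cA})(h_x, y) \cong \Hom_{\cD(\cA)}(h_x,y)$ is precisely fully faithfulness of $\bbL G_!$ on compact objects. First I would make this translation precise so that the equivalence of (i) and (ii) becomes almost tautological.

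For (ii) $\Rightarrow$ (iii): restricting the quasi-equivalence $\widehat{G}$ along $h: \cA \to \widehat{\cA}$ yields a quasi-isomorphism $\cA(x,y) \stackrel{\sim}{\to} \widehat{\cB}(h_{Gx}, h_{Gy}) \simeq \cB(Gx, Gy)$, giving condition (i) of Definition~\ref{def:quasi-eq}. Essential surjectivity of $\dgHo(\widehat{G})$ on $\dgHo(\widehat{\cB}) \simeq \cD_c(\cB)$ in particular hits every representable $h_y$, which is precisely the essential image condition for objects of $\dgHo(\cB) \subset \cD_c(\cB)$.

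For (iii) $\Rightarrow$ (i), I would argue separately for fully faithfulness and essential surjectivity. For fully faithfulness of $\bbL G_!: \cD_c(\cA) \to \cD_c(\cB)$, note that on representables we have $\Hom_{\cD(\cA)}(h_x, h_y) \simeq \mathrm{H}^0 \cA(x,y)$ and analogously over $\cB$, so the hypothesis on hom-complexes of $\cA$ gives that $\bbL G_!$ is fully faithful on the full subcategory spanned by representables (and their shifts). Because $\bbL G_!$ is exact and preserves coproducts and retracts, and because $\cD_c(\cA)$ is the thick triangulated subcategory generated by the representables, a standard five-lemma/retract argument extends fully faithfulness from representables to all of $\cD_c(\cA)$. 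For essential surjectivity, the essential image of $\bbL G_!$ is a thick triangulated subcategory of $\cD_c(\cB)$; by hypothesis it contains every representable $h_y$, hence it contains the thick triangulated subcategory they generate, which is $\cD_c(\cB)$.

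The main obstacle is the bookkeeping in (iii) $\Rightarrow$ (i): one has to verify carefully that fully faithfulness (resp.\ essential image) tested on the generating set of representables propagates to the whole of $\cD_c$ under shifts, cones, and retracts. Everything else is a direct translation between the dg and the derived-categorical formulations.
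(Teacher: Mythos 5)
Your plan is correct and rests on the same two identifications the paper uses, namely $\dgHo(\widehat{G}) \simeq \bbL G_!$ and $\mathrm{H}^n\cA(x,y) \simeq \Hom_{\cD_c(\cA)}(h(x),h(y)[n])$, together with a devissage from representables to all of $\cD_c$. The organization is a bit different: the paper proves the cycle (i) $\Rightarrow$ (iii) $\Rightarrow$ (ii) $\Rightarrow$ (i) and absorbs the devissage into the brief remark that condition (i) of Definition~\ref{def:quasi-eq} for $G$ gives it ``by construction'' for $\widehat{G}$ (and that the hypotheses of (iii) imply essential surjectivity of $\bbL G_!$), whereas you spell the devissage out explicitly in your (iii) $\Rightarrow$ (i) after disposing of (i) $\Leftrightarrow$ (ii) as a translation. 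That is the same mathematics, just more explicitly stated. One bookkeeping point to fix: in (iii) $\Rightarrow$ (i) you say you argue ``separately'' for fully faithfulness and essential surjectivity, but the two are not independent --- the essential image of $\bbL G_!$ is only a \emph{thick} triangulated subcategory (closed under cones and, using idempotent completeness of $\cD_c(\cA)$, under retracts) once you already know $\bbL G_!$ is fully faithful, since you must lift morphisms and idempotents from $\cD_c(\cB)$ back to $\cD_c(\cA)$. So establish fully faithfulness on all of $\cD_c(\cA)$ first, and only then invoke thickness of the essential image to conclude.
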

\begin{proof} The implication (i) $\Rightarrow$ (iii) follows from the natural isomorphisms $\textrm{H}^n\cA(x,y)\simeq \Hom_{\cD_c(\cA)}(h(x),h(y)[n])$, $n \in \bbZ$. If $G$ satisfies condition (i) of Definition~\ref{def:quasi-eq}, then by construction so it does the dg functor $\widehat{G}$. The conditions of item (iii) of Proposition~\ref{prop:char} imply that the functor $\bbL G_!$ is essentially surjective. Since $\dgHo(\widehat{G})\simeq \bbL G_!$, we conclude then that $\widehat{G}$ is a quasi-equivalence. This shows the implication (iii) $ \Rightarrow$ (ii). Finally, note that if $G$ is a quasi-equivalence then $\dgHo(G)$ is an equivalence. As a consequence, the implication (ii) $\Rightarrow$ (i) follows from the natural isomorphism $\bbL G_!\simeq \dgHo(\widehat{G})$.
\end{proof}
As proved in \cite[Thm.~5.3]{IMRN}, the category $\dgcat(k)$ carries a Quillen model structure whose weak equivalences are the {\em derived Morita equivalences}. The homotopy category hence obtained will be denoted by $\Hmo(k)$. The tensor product of $k$-algebras extends naturally to dg categories, giving rise to a symmetric monoidal structure $-\otimes_k-$ on $\dgcat(k)$, which descends to the homotopy category. Finally, recall from \cite[\S3.8]{ICM} that a {\em $\cA\text{-}\cB$-bimodule} $X$ is a dg functor $X:\cA\otimes_k \cB^\op \to \cC_\dg(k)$, or in other words a $(\cA^\op \otimes_k \cB)$-module.
\subsection{Saturated dg categories}\label{sub:saturated}
Following Kontsevich~\cite{IAS,Miami}, a dg category $\cA$ is called {\em saturated} if the $\cA\text{-}\cA$-bimodule
\begin{eqnarray*}
\cA(-,-): \cA \otimes_k \cA^\op \to \cC_\dg(k) && (x,y) \mapsto \cA(x,y)
\end{eqnarray*}
belongs to $\cD_c(\cA^\op \otimes_k \cA)$ and $\sum_i \mathrm{dim}\,\textrm{H}^i\cA(x,y)<\infty$ for all objects $x$ and $y$. As proved in \cite[Thm.~5.8]{CT1}, the saturated dg categories can be conceptually characterized as being precisely the dualizable (or rigid) objects of the symmetric monoidal category $\Hmo(k)$. Moreover, they are always derived Morita equivalent to saturated dg algebras.
\section{Noncommutative motives}\label{sec:NCmotives}
In this section we recall the construction of the categories of noncommutative pure and mixed motives. For further details consult the survey article \cite{survey}.
\subsection{Noncommutative Chow motives}\label{sub:Chow}
The rigid category $\NChow(k)_F$ of {\em noncommutative Chow motives} is the pseudo-abelian envelope of the category:
\begin{itemize}
\item[(i)] whose objects are the saturated dg categories;
\item[(ii)]  whose morphisms from $\cA$ to $\cB$ are given by the $F$-linearized Grothendieck group $K_0(\cA^\op \otimes_k \cB)_F$;
\item[(iii)] whose composition law is induced by the (derived) tensor product of bimodules.
\end{itemize}
The functor $\cU:\sdgcat(k) \to \NChow(k)_F$ mentioned in the introduction sends a dg functor to the class (in the $F$-linearized Grothendieck group) of the naturally associated bimodule. The symmetric monoidal structure on $\NChow(k)_F$ is induced by the tensor product of dg categories.
\subsection{Noncommutative Voevodsky motives}\label{sub:Voevodsky}
The rigid category $\NVoev(k)_F$ of {\em noncommutative Voevodsky motives} was introduced by the authors in \cite[\S3]{uncond}. It is defined as the pseudo-abelian envelope of the quotient category $\NChow(k)_F\!/\onil$, where $\onil$ denotes the classical $\otimes$-nilpotence ideal.
\subsection{Noncommutative homological motives}\label{sub:homological}
As proved in \cite[Thm.~7.2]{Galois}, periodic cyclic homology $HP$ gives rise to a well-defined $F$-linear $\otimes$-functor
\begin{equation}\label{eq:functor-HP}
\overline{HP_\ast}: \NChow(k)_F \too \sVect(K)
\end{equation}
with values in the category of super $K$-vector spaces, where $K=F$ when $F$ is a field extension of $k$ and $K=k$ when $k$ is a field extension of $F$. The rigid category $\NHom(k)_F$ of {\em noncommutative motives} was introduced by the authors in \cite[\S10]{Galois} as the pseudo-abelian envelope of the quotient category $\NChow(k)_F/\mathrm{Ker}(\overline{HP_\ast})$.
\subsection{Noncommutative numerical motives}\label{sub:numerical}
The rigid category $\NNum(k)_F$ of {\em noncommutative numerical motives} is defined as the pseudo-abelian envelope of the quotient category $\NChow(k)_F/\cN$, where $\cN$ is the largest $\otimes$-ideal of $\NChow(k)_F$ (distinct from the entire category). As proved in \cite{Kontsevich}, $\NNum(k)_F$ is $\otimes$-equivalent to the pseudo-abelian envelope of the category:
\begin{itemize}
\item[(i)] whose objects are the saturated dg algebras;
\item[(ii)] whose morphisms from $A$ and $B$ are given by $K_0(A^\op \otimes_k B)_F\!/\mathrm{Ker}(\chi)$, where $\chi$ is the $F$-linearized bilinear form on $K_0(A^\op \otimes_kB)_F$ given by
\begin{eqnarray}\label{eq:bilinear}
([X],[Y]) & \mapsto & \sum_{n \in \bbZ} (-1)^n \mathrm{dim}_k \,\Hom_{\cD_c(A^\op \otimes_k B)}(X,Y[n])\,;
\end{eqnarray}
\item[(iii)] whose composition law is induced by the (derived) tensor product of bimodules.
\end{itemize}
\begin{remark}
Note that sequence \eqref{eq:composed} follows from the natural $\otimes$-inclusions $\onil \subseteq \mathrm{Ker}(\overline{HP_\ast}) \subseteq \cN$.
\end{remark}
\subsection{Noncommutative mixed motives}\label{sub:mixed}
The rigid triangulated category $\Mix(k)_\bbQ$ of {\em noncommutative mixed motives} was introduced by Kontsevich in \cite{IAS}; see also\footnote{In {\em loc.~cit} we have considered non-connective algebraic $K$-theory. However, Kontsevich's original definition is in terms of {\em connective} algebraic $K$-theory.} \cite[\S9.2]{CT1}. It is defined in three steps: 
\begin{itemize}
\item[(i)] First, consider the category $\KPM(k)$ (enriched over spectra) whose objects are the saturated dg categories, whose morphisms from $\cA$ to $\cB$ are given by the (connective) algebraic $K$-spectrum $K(\cA^\op\otimes_k\cB)$, and whose composition law is induced by the (derived) tensor product of bimodules;
\item[(ii)] Then, take the formal triangulated envelope of $\KPM(k)$. Objects in this new category are formal finite extensions of formal shifts of objects in $\KPM(k)$. Let $\KTM(k)$ be the associated homotopy category; 
\item[(iii)] Finally, tensor each abelian group of morphisms of $\KTM(k)$ with $\bbQ$ and then pass to the associated pseudo-abelian envelope. The resulting category $\Mix(k)_\bbQ$ was named by Kontsevich as the category of noncommutative mixed motives (with rational coefficients).
\end{itemize}
\section{Proof of Theorem~\ref{thm:main1}}\label{sec:proof1}
Recall from \cite[\S4.1.3]{Andre} that an object of $\Chow(k)_F$ consists of a triple $(Z,e,r)$, where $Z$ is a smooth projective $k$-scheme, $e$ is an idempotent element of the $F$-algebra of correspondences $\cZ_{\mathrm{rat}}^{\mathrm{dim}(Z)} (Z\times Z)_F$ and $r \in \bbZ$. Moreover, we have
\begin{equation}\label{eq:morph-Chow}
\Hom_{\Chow(k)_F}((Z,e,r),(Z',e',r')):=e' \circ \cZ^{\mathrm{dim}(Z) -r+r'}_{\mathrm{rat}}(Z\times Z')_F \circ e\,.
\end{equation}
\begin{lemma}\label{lem:key1}
Let $\cV$ is a full subcategory of $\SmProj(k)$, which is stable under finite products and disjoint unions. If $\cV$ is contained in the category of finite {\'e}tale $k$-schemes, then we have the following identification 
$$ \Chow(k)_F^\cV=\{ M(Z):=(Z,\id_Z,0) \,|\, Z \in \cV\}^\natural \subset \Chow(k)_F\,.$$
\end{lemma}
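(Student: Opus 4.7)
The plan is to verify that the pseudo-abelian envelope $\cD := \{M(Z) \,|\, Z \in \cV\}^\natural$ already enjoys all the closure properties defining $\Chow(k)_F^\cV$, so that equality follows from the minimality of the latter. The inclusion $\Chow(k)_F^\cV \supseteq \cD$ is automatic, since the left-hand side is in particular additive, idempotent complete and contains every $M(Z)$. For the reverse inclusion, I will check three closure properties of $\cD$ inside $\Chow(k)_F$.

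First, I would observe that $\cD$ is closed under finite direct sums. Indeed, the functor $M$ sends disjoint unions of smooth projective $k$-schemes to direct sums of Chow motives, so that $M(Z_1) \oplus M(Z_2) \simeq M(Z_1 \sqcup Z_2)$. Combined with the hypothesis that $\cV$ is stable under disjoint unions, this shows that finite direct sums of objects of the form $M(Z)$, $Z \in \cV$, are themselves of the form $M(Z')$ for some $Z' \in \cV$. Stability under direct summands is built into the pseudo-abelian envelope. Therefore $\cD$ is additive and idempotent complete.

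The key step is rigidity. Using the general formula for duals in $\Chow(k)_F$ recalled above, namely $(Z,e,r)^\vee = (Z,e^t,\dim(Z)-r)$, I would compute
\begin{equation*}
M(Z)^\vee = (Z,\id_Z,0)^\vee = (Z,\id_Z,\dim(Z)).
\end{equation*}
Since by hypothesis every $Z \in \cV$ is finite \'etale over $k$, we have $\dim(Z)=0$, and hence $M(Z)^\vee = (Z,\id_Z,0) = M(Z)$. Thus each generator of $\cD$ is self-dual, and consequently the dual of an arbitrary object $(M(Z),e) \in \cD$ is simply $(M(Z),e^t)$, which again lies in $\cD$. This makes $\cD$ rigid.

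Having verified that $\cD$ is an additive, rigid, idempotent complete subcategory of $\Chow(k)_F$ containing $\{M(Z)\,|\, Z \in \cV\}$, the minimality of $\Chow(k)_F^\cV$ forces $\Chow(k)_F^\cV \subseteq \cD$, completing the argument. There is no serious obstacle here; the content of the lemma is simply the observation that in dimension zero the Tate twists collapse, so that no nontrivial Lefschetz motive is needed to close the category under duality, which is the feature that singles out finite \'etale schemes.
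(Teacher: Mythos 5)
Your argument is almost right and follows the same route as the paper, but there is a genuine gap: you never verify that your candidate category $\cD := \{M(Z)\,|\,Z\in\cV\}^\natural$ is closed under tensor products. By the definition recalled from Andr\'e (\S4.1.6), $\Chow(k)_F^\cV$ is the smallest subcategory stable under direct sums, \emph{tensor products}, direct factors, and duals. You check direct sums (via disjoint unions), direct factors (built into $(-)^\natural$), and duals (via $\dim Z = 0$), but you silently skip the tensor-product check when you assert that $\cD$ is ``rigid'' — rigidity is only meaningful for a monoidal subcategory, so without tensor-closure you cannot invoke minimality to get $\Chow(k)_F^\cV \subseteq \cD$. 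Notice also that this is precisely where the hypothesis that $\cV$ is stable under \emph{finite products} enters, a hypothesis you quoted but then never used: since $M(Z_1)\otimes M(Z_2)\simeq M(Z_1\times Z_2)$ and $\cV$ is closed under finite products, the generators are closed under $\otimes$, and the pseudo-abelian envelope preserves this. Once that sentence is added, your proof is complete and coincides with the paper's.
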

\begin{proof}
Recall from \cite[\S4.1.6]{Andre} that by definition $\Chow(k)_F^\cV$ is the smallest subcategory of $\Chow(k)_F$ which contains the objects $M(Z), Z \in \cV$, and which is stable under direct sums, tensor products, direct factors, and duals. Hence, the inclusion $\Chow(k)_F^\cV \supseteq \{ M(Z)\,|\, Z \in \cV\}^\natural$ clearly holds. In order to prove the inclusion $\subseteq$ it suffices then to show that the category $\{M(Z)\,|\,Z \in \cV\}$ is already stable under direct sums, tensor products, and duals (the pseudo-abelian envelope preserves these constructions). In what concerns direct sums and tensor products this follows from the hypothesis that $\cV$ is stable under finite products and disjoint unions. In what concerns duals, this follows from the equality $\id_X=\id_X^t$ and from the fact that for every $Z \in \cV$ we have $\mathrm{dim}(Z)=0$; see \cite[\S4.1.4]{Andre}.
\end{proof}
Let $\bbQ(1) \in \Chow(k)_F$ be the Tate motive. Recall from \cite[Thm.~1.1]{CvsNC}\cite[Prop.~4.4]{Galois} that the functor \eqref{eq:functor-central} is given by the following composition
$$ \Chow(k)_F \stackrel{\pi}{\too} \Chow(k)_F\!/_{\!\!-\otimes \bbQ(1)} \stackrel{R}{\too} \NChow(k)_F\,,$$
where $\Chow(k)_F\!/_{\!\!-\otimes \bbQ(1)}$ denotes the orbit category associated to the auto-equivalence $-\otimes \bbQ(1)$. Both functors are faithful and symmetric monoidal, and $R$ is moreover full. Hence, in order to prove that the functor \eqref{eq:functor-central} is a (tensor) equivalence when $\cV$ is contained in the category of finite {\'e}tale $k$-schemes, it suffices by Lemma~\ref{lem:key1} to show that the functor $\pi$ becomes full when restricted to the objects $M(Z), Z \in \cV$. By \eqref{eq:morph-Chow} we have $\Hom_{\Chow(k)_F}(M(Z),M(Z')) = \cZ_{\mathrm{rat}}^0(Z\times Z')_F$. On the other hand, by definition of the orbit category, we have
$$ \Hom_{\Chow(k)_F\!/_{\!\!-\otimes \bbQ(1)}}(\pi M(Z),\pi M(Z')) = \bigoplus_{i \in \bbZ} \Hom_{\Chow(k)_F}(M(Z),M(Z')(i))\,.$$
Since by hypothesis the $k$-schemes $Z$ and $Z'$ are zero-dimensional, the $F$-vector spaces $\Hom_{\Chow(k)_F}(M(Z),M(Z')(i)) = \cZ_{\mathrm{rat}}^i(Z\times Z')_F$
are trivial for $i \neq 0$ and so we conclude that \eqref{eq:functor-central} is in fact a (tensor) equivalence.

Now, let $\cV$ be a (non-trivial) category which is {\em not} contained in the category of finite {\'e}tale $k$-schemes. There exists then a smooth projective $k$-scheme $Z$ of positive dimension $\mathrm{dim}(Z)$ belonging to $\cV$. As a consequence, we obtain the isomorphism
$$ \Hom_{\Chow(k)_F}(M(Z),M(Z)(-\mathrm{dim}(Z)))=\cZ_{\mathrm{rat}}^0(Z\times Z)_F \simeq F\,.$$
This implies that the homomorphism 
$$\End_{\Chow(k)_F}(M(Z)) \to\End_{\Chow(k)_F\!/_{\!\!-\otimes \bbQ(1)}}(\pi M(Z))$$
is {\em not} surjective and so we conclude that the functor \eqref{eq:functor-central} is not a (tensor) equivalence. 

\section{Proof of Theorem~\ref{thm:main2}}\label{sec:proof2}
By combining \cite[Thm.~1.12]{Semi}\cite[Prop.~4.4]{Galois} with the construction of the functor \eqref{eq:functor-central}, we obtain the following commutative diagram
\begin{equation}\label{eq:diagram-notation}
\xymatrix{
*+<2.5ex>{\AM(k)_F} \ar[rr]^-{\simeq} \ar@{^{(}->}[d] && *+<2.5ex>{\NAM(k)_F} \ar@{^{(}->}[d] \\
\Chow(k)_F \ar[d] \ar[r]^-\pi & \Chow(k)_F\!/_{\!\!-\otimes \bbQ(1)} \ar[d] \ar[r]^-R & \NChow(k)_F \ar[d] \\
\Num(k)_F \ar[r]_-\pi & \Num(k)_F\!/_{\!\!-\otimes \bbQ(1)} \ar[r]_-{R_\cN} & \NNum(k)_F\,.
}
\end{equation}
Since by Lemma~\ref{lem:key1} the category $\AM(k)_F$ consists only of the direct factors of zero-dimensional $k$-schemes, all the different equivalence relations on algebraic cycles are trivial. As a consequence, the composed functor
$$ \AM(k)_F \hookrightarrow \Chow(k)_F \too \Num(k)_F$$
is fully-faithful. Hence, an argument analogous to the one used in the proof of Theorem~\ref{thm:main1} shows us that the functor $\pi:\Num(k)_F \to \Num(k)_F\!/_{\!\!-\otimes \bbQ(1)}$ becomes fully-faithful when restricted to $\AM(k)_F$. Finally, the commutativity of the above diagram combined with the fact that $R_\cN$ is fully-faithful, allow us to conclude that the composed functor \eqref{eq:composed} becomes fully-faithful when restricted to $\NAM(k)_F$. 

\section{Proof of Theorem~\ref{thm:main3}}\label{sec:proof3}
We start with item (i). Recall from \S\ref{sub:homological} that since $F$ is a field extension of $k$ we have a well-defined $F$-linear $\otimes$-functor
$$ \overline{HP_\ast}:\NNum(k)_F \too \mathrm{sVect}(F)\,.$$
Given $M_1, M_2 \in \NNum(k)_F$, recall from \cite[Prop.~9.7]{Galois} that the  symmetry isomorphism constraints of the category $\NNum^\dagger(k)_F$ are given by $c^\dagger_{M_1,M_2}:= c_{M_1,M_2} \circ (e_{M_1} \otimes e_{M_2})$, where $e_M$ is the endomorphism $2\cdot \underline{\pi}^+_M - \id_M$ of $M$ with $\underline{\pi}^+_M$ the noncommutative correspondence associated to the K{\"u}nneth projector $\pi^+_M: \overline{HP_\ast}(M) \twoheadrightarrow \overline{HP_\ast}^+(M) \hookrightarrow \overline{HP_\ast}(M)$. By combining Lemma~\ref{lem:key1} with the $\otimes$-equivalence $\AM(k)_F\simeq \NAM(k)_F$ we obtain then the following identification
\begin{equation}\label{eq:description}
\NAM(k)_F = \{\cU(\cD_\perf^\dg(Z))\,|\, Z \,\textrm{finite {\'e}tale $k$-scheme} \}^\natural \subset \NChow(k)_F\,.
\end{equation}
Hence, in order to prove item (i) it suffices to show that for every finite {\'e}tale $k$-scheme $Z$ the noncommutative correspondence $\underline{\pi}^+_{\cU(\cD_\perf^\dg(Z))}$ agrees with $\id_{\cU(\cD_\perf^\dg(Z))}$. As explained in the proof of \cite[Thm.~1.3]{Galois}, $\overline{HP_\ast}(\cU(\cD_\perf^\dg(Z)))$ identifies with $HP_\ast(\cD_\perf^\dg(Z))$ and 
\begin{eqnarray*}
HP_\ast^+(\cD_\perf^\dg(Z))\simeq \bigoplus_{n\,\textrm{even}}H^n_{dR}(Z) && HP_\ast^-(\cD_\perf^\dg(Z))\simeq \bigoplus_{n\,\textrm{odd}}H^n_{dR}(Z)\,,
\end{eqnarray*}
where $H_{dR}(-)$ denotes de Rham cohomology.
Since $Z$ is zero-dimensional, $H^n_{dR}(Z)=0$ for $n\neq 0$ and so the equality $\underline{\pi}^+_{\cU(\cD_\perf^\dg(Z))}=\id_{\cU(\cD_\perf^\dg(Z))}$ holds.

Item (ii) follows from the Tannakian formalism; see \cite[\S2.3.3]{Andre}. Let us now prove item (iii). As explained in \cite[\S1.3]{Andre}, the classical Galois-Grothendieck correspondence between finite {\'e}tale $k$-schemes and finite $\mathrm{Gal}(\overline{k}/k)$-sets admits a $F$-linearization making the following diagram commutative
$$
\xymatrix{
\{\textrm{finite {\'e}tale $k$-schemes}\}^\op \ar[d]_-{M} \ar[r]_{\simeq}^-{Z \mapsto Z(\overline{k})} & \{\textrm{finite $\mathrm{Gal}(\overline{k}/k)$-sets} \}^\op  \ar[d]^-{S \mapsto F^S}\\
\AM(k)_F \ar[r]_-{\simeq} & \{\textrm{finite dim. $F$-linear representations of $\mathrm{Gal}(\overline{k}/k)$} \} \,.
}
$$
Recall from \cite[\S1.3.1]{Andre} that $Z=\mathrm{spec}(A)$, with $A$ a finite dimensional {\'e}tale $k$-algebra. As explained in \cite[Remark~6.18]{DelMil}, we have a natural isomorphism $H^0_{dR}(Z)=H_{dR}(Z)\simeq A$ of $k$-vector spaces. Since $A \otimes_k \overline{k} \simeq \overline{k}^{Z(\overline{k})}$, the $k$-algebra $A$ is of dimension $Z(\overline{k})$ and so $H^0_{dR}(Z)$ also identifies with $k^{Z(\overline{k})}$. Hence, via the $\otimes$-equivalence $\AM(k)_F \simeq \NAM(k)_F$, the fiber functor
\begin{eqnarray*}
\overline{HP_\ast}: \NAM(k)_F \too \mathrm{Vect}(F) && \cU(\cD_\perf^\dg(Z)) \mapsto k^{Z(\overline{k})}\otimes_k F
\end{eqnarray*}
corresponds to the fiber functor
\begin{eqnarray}\label{eq:fiber-functor}
\AM(k)_F \too \mathrm{Vect}(F) && Z \mapsto F^{Z(\overline{k})}\,.
\end{eqnarray}
The forgetful functor from the category of finite dimensional $F$-linear representations of $\mathrm{Gal}(\overline{k}/k)$ to $\mathrm{Vect}(F)$ identifies with \eqref{eq:fiber-functor}, and so one concludes that the noncommutative motivic Galois group $\mathrm{Gal}(\NAM(k)_F)\simeq \mathrm{Gal}(\AM(k)_F)$ is naturally isomorphic to the absolute Galois group of $k$.
\section{Base-change}\label{sec:basechange}
In this section we first develop the base-change mechanism in the world of noncommutative pure motives (see Theorems \ref{thm:basechange1} and \ref{thm:basechange2}), then prove its compatibility with the classical base-change mechanism (see Theorem~\ref{thm:compatibility}), and finally apply it to the study of motivic decompositions and Schur/Kimura finiteness (see \S\ref{sub:decompositions}-\ref{sub:Schur}).
\begin{theorem}\label{thm:basechange1}
Every field extension $k'/k$ gives rise to $F$-linear $\otimes$-functors 
\begin{eqnarray*}
-\otimes_kk':\NChow(k)_F \too \NChow(k')_F && -\otimes_kk': \NNum(k)_F \too \NNum(k')_F\,.
\end{eqnarray*}
Assuming the noncommutative standard conjecture $C_{NC}$, that $k$ is of characteristic zero, and that $k$ is a field extension of $F$ or that $F$ is a field extension of $k'$, we have moreover a well-defined $F$-linear $\otimes$-functor $-\otimes_kk':\NNum^\dagger(k)_F \to \NNum^\dagger(k')_F$. 
\end{theorem}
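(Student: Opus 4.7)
The plan is to define $-\otimes_k k'$ stage by stage, pushing it through the successive quotients $\NChow \to \NNum \to \NNum^\dagger$.

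\textit{Constructions on $\NChow$ and $\NNum$.} On objects I would send a saturated dg $k$-category $\cA$ to the dg $k'$-category $\cA\otimes_k k'$ obtained by tensoring each Hom-complex with $k'$; saturation is preserved since the identity bimodule remains compact under base change and $\dim_{k'} H^i(\cA\otimes_k k')(x,y) = \dim_k H^i\cA(x,y) < \infty$. On morphisms, the derived extension of scalars $X \mapsto X\otimes_k k'$ carries a bimodule in $\cD_c(\cA^\op \otimes_k \cB)$ to one in $\cD_c((\cA\otimes_k k')^\op \otimes_{k'}(\cB\otimes_k k'))$, and thus induces an $F$-linear map on $F$-linearized Grothendieck groups. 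Composition is preserved by the natural isomorphism $(Y\otimes^\bbL_\cB X)\otimes_k k' \simeq (Y\otimes_k k')\otimes^\bbL_{\cB\otimes_k k'}(X\otimes_k k')$, and the symmetric monoidal structure by $(\cA\otimes_k \cB)\otimes_k k' \simeq (\cA\otimes_k k')\otimes_{k'}(\cB\otimes_k k')$. To descend to $\NNum$, I would invoke the defining property of $\cN$ as the \emph{largest} $\otimes$-ideal of $\NChow(k)_F$ distinct from the full category (see \S\ref{sub:numerical}): the preimage $\cI := (-\otimes_k k')^{-1}(\cN(k')_F)$ is a $\otimes$-ideal in $\NChow(k)_F$, and it is proper because $-\otimes_k k'$ is unital (so the identity of the unit object is not mapped into $\cN(k')_F$). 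By maximality one obtains $\cI \subseteq \cN(k)_F$, so $-\otimes_k k'$ descends to a functor $\NNum(k)_F \to \NNum(k')_F$.

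\textit{Descent to $\NNum^\dagger$.} Since $c^\dagger_{M_1,M_2} = c_{M_1,M_2}\circ(e_{M_1}\otimes e_{M_2})$ with $e_M = 2\underline{\pi}^+_M - \id_M$, it suffices to prove the equality $\underline{\pi}^+_{M\otimes_k k'} = \underline{\pi}^+_M\otimes_k k'$ inside $\NNum^\dagger(k')_F$. Under $C_{NC}$ the noncommutative correspondence $\underline{\pi}^+_M$ exists and is determined modulo $\cN$ by the property that $\overline{HP_*}$ sends it to the K\"unneth projector $\pi^+_M$ on $HP_*$. In characteristic zero, periodic cyclic homology commutes with the flat base change $-\otimes_k k'$, yielding a natural $\bbZ/2$-graded isomorphism $HP_*(\cA\otimes_k k') \simeq HP_*(\cA)\otimes_k k'$; consequently the even projector of the base change agrees with the base change of the even projector. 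The hypothesis that $k$ is an extension of $F$ or $F$ is an extension of $k'$ is precisely what is needed for the fiber functors on $\NChow(k)_F$ and $\NChow(k')_F$ to land in compatible targets (either both in $\sVect(F)$, or in $\sVect(k)$ and $\sVect(k')$ related by super base change), so this matching of projectors takes place in the correct category.

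\textit{Main obstacle.} The crux of the argument is the final step: upgrading the linear-algebraic identity $\pi^+_{M\otimes_k k'} = \pi^+_M\otimes_k k'$ to an equality of \emph{correspondences} in $\NNum^\dagger(k')_F$. This requires both the existence of algebraic K\"unneth projectors guaranteed by $C_{NC}$ and the uniqueness of such a lift modulo $\cN$, which together transfer the analytic identity from the level of super vector spaces back to the level of morphisms in $\NNum^\dagger$.
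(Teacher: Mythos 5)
Your constructions on $\NChow$ and $\NNum^\dagger$ are essentially the right ideas: the paper builds the $\NChow$ functor by passing through the universal additive invariant $\Hmo_0$ (Proposition~\ref{prop:Morita}, Lemma~\ref{lem:key2}, Theorem~\ref{thm:universal}, Proposition~\ref{prop:induced}), and the $\NNum^\dagger$ step is exactly the compatibility of $\overline{HP_\ast}$ with base-change that the paper establishes in Proposition~\ref{prop:HP} (which, note, is not automatic: $HP$ is a limit, and the paper uses that the periodicity operator stabilizes for saturated dg categories). The problem is the descent to $\NNum$, which is where the paper spends most of its effort and where your shortcut breaks down.

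You set $\cI := (-\otimes_k k')^{-1}(\cN(k')_F)$, observe that it is a proper $\otimes$-ideal, and conclude from the maximality of $\cN(k)_F$ that $\cI \subseteq \cN(k)_F$. But this is the wrong inclusion. For $-\otimes_k k'$ to descend along the quotients $\NChow(k)_F \to \NNum(k)_F$ and $\NChow(k')_F \to \NNum(k')_F$ you need $\cN(k)_F \subseteq \cI$, i.e.\ that a numerically trivial correspondence over $k$ remains numerically trivial after extension of scalars. Maximality of $\cN(k)_F$ only guarantees that it contains any proper $\otimes$-ideal, so it yields $\cI \subseteq \cN(k)_F$ --- the converse of what is required, and useless for descent. (The analogous phenomenon for rings: if $\phi:R\to S$ is a ring map and $I\subset R$, $J\subset S$ are the largest proper ideals, then $\phi^{-1}(J)\subseteq I$ is automatic, yet $\phi(I)\subseteq J$ can fail --- e.g.\ $\bbZ_{(2)}\hookrightarrow\bbQ$, which does not descend to a map $\bbF_2 \to \bbQ$.) To close the gap the paper uses the explicit description $\cN=\mathrm{Ker}(\chi)$ from \S\ref{sub:numerical} and proves the implication~\eqref{eq:implication} directly: Lemma~\ref{lem:compact} exploits the restriction adjoint to get $\chi([X],[R(Y)])=[k':k]\cdot\chi([X\otimes_kk'],[Y])$ when $k'/k$ is finite, and Proposition~\ref{prop:key} reduces the infinite case to the finite one by a filtered-colimit description of $\cD_c(C\otimes_kk')$. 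Some version of this concrete Euler-characteristic computation is genuinely needed; the abstract ideal-theoretic argument does not supply it.
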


\begin{theorem}\label{thm:basechange2}
When the field extension $k'/k$ is finite and separable we have the following adjunctions
\begin{equation}\label{eq:adjunctions}
\xymatrix{
\NChow(k')_F \ar@<1ex>[d]^-{(-)_k}   &&  \NNum(k')_F \ar@<1ex>[d]^-{(-)_k}  \\
\NChow(k)_F \ar@<1ex>[u]^-{-\otimes_kk'}  && \NNum(k)_F \ar@<1ex>[u]^-{-\otimes_kk'}\,,
}
\end{equation}
where $(-)_k$ denotes the restriction functor.
\end{theorem}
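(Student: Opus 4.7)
The plan is to construct both adjunctions from a single canonical identity at the dg-category level and then propagate it through the quotient constructions defining $\NChow$ and $\NNum$.

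First I would verify that the restriction functor $(-)_k$ takes saturated dg categories to saturated dg categories, which uses the finiteness and separability of $k'/k$ in an essential way: because $k'$ is a smooth and proper commutative $k$-algebra, its diagonal bimodule is perfect, and consequently $k'$ is a dualizable object of $\Hmo(k)$; hence restriction preserves dualizability (equivalently, saturation) via Kontsevich's characterization recalled in \S\ref{sub:saturated}. This makes both functors in \eqref{eq:adjunctions} well-defined at the level of pure (and numerical) motives.

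The core input is the canonical isomorphism of dg categories
\begin{equation}\label{eq:bcid}
(\cA \otimes_k k')^{\op} \otimes_{k'} \cB \;\simeq\; \cA^{\op} \otimes_k \cB_k,
\end{equation}
for $\cA \in \sdgcat(k)$ and $\cB \in \sdgcat(k')$: both sides have the same objects and the same morphism complex $\cA(a,a')\otimes_k \cB(b,b')$, with the residual $k'$-structure on the left simply forgotten on the right. Because any $k$-linear dg functor out of a $k'$-dg category inherits $k'$-linearity on the nose, the derived categories of compact modules over the two sides of \eqref{eq:bcid} are equivalent, so their $F$-linearized Grothendieck groups coincide. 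This yields a natural bijection
\[
\Hom_{\NChow(k')_F}(\cU(\cA)\otimes_k k',\cU(\cB))\;\cong\;\Hom_{\NChow(k)_F}(\cU(\cA),\cU(\cB)_k),
\]
which, after extension to the pseudo-abelian envelope, gives the left-hand adjunction in \eqref{eq:adjunctions}.

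To promote this to the right-hand adjunction, I would show that the largest $\otimes$-ideal $\cN$ is preserved by the correspondence above. Using \eqref{eq:bilinear}, the two Euler forms compare by $\chi_k = [k':k]\cdot \chi_{k'}$, since $\dim_k V = [k':k]\cdot \dim_{k'} V$ for any $k'$-vector space $V$; as $[k':k]$ is a nonzero integer, hence a unit in $F$, the two forms share the same left and right radicals. Consequently a morphism lies in $\cN$ on one side if and only if its adjoint lies in $\cN$ on the other, and the Chow adjunction descends to numerical motives.

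The main obstacle I expect is the preservation of saturation under restriction; everything else is formal bookkeeping around the identity \eqref{eq:bcid}. Once restriction is known to land in $\sdgcat(k)$, that identity can be read as asserting that $-\otimes_k k'$ is already left adjoint to $(-)_k$ at the level of Kontsevich's rigid symmetric monoidal category of saturated dg categories, and both adjunctions stated in the theorem follow by the additive and quotient constructions defining $\NChow$ and $\NNum$.
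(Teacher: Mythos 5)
Your proposal matches the paper's strategy quite closely, and the identity you single out as the ``core input'' --- the canonical isomorphism $(\cA\otimes_kk')^{\op}\otimes_{k'}\cB\simeq\cA^{\op}\otimes_k\cB_k$ of (underlying $k$-linear) dg categories --- is exactly what the paper uses, phrased there via the equivalence $\rep(\cA\otimes_kk',\cB)\simeq\rep(\cA,\cB_k)$, to produce the adjunction on $\Hmo_0$ and then on $\NChow$. Your route to the numerical adjunction is also closely related to the paper's but packaged differently: you compare the two Euler pairings directly under the adjunction bijection and note that $\chi_k=[k':k]\cdot\chi_{k'}$ forces the radicals to agree, whereas the paper checks separately that each of $-\otimes_kk'$ and $(-)_k$ preserves $\mathrm{Ker}(\chi)$ (Theorem~\ref{thm:basechange1} and a separate proposition using an extension--restriction adjunction on $\cD_c$). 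Your version is a genuinely cleaner way to organize the same numerical input, though you should spell out the formal step that preservation of $\mathrm{Ker}(\chi)$ by the adjunction bijection, together with the ideal property of $\mathrm{Ker}(\chi)$, is what lets you conclude that both functors descend. You also omit, as a small bookkeeping point, the verification that $(-)_k$ preserves derived Morita equivalences and is compatible with the gluing $T(\cA,\cB;X)$, which is what lets it descend to $\Hmo_0$ in the first place (the paper's Lemma~\ref{lemma:key2}).

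The one genuine gap is your treatment of the preservation of saturation, which you yourself flag as the main obstacle. You argue that since $k'$ is dualizable in $\Hmo(k)$ (smooth and proper), restriction preserves dualizability. That inference does not follow: $(-)_k$ is the right adjoint of the strong monoidal functor $-\otimes_kk'$ and is therefore only lax monoidal, and lax monoidal functors do not preserve dualizable objects in general. What makes the argument go through is precisely the separability idempotent $e\in k'\otimes_kk'$, which is the content of the paper's Lemma~\ref{lemma:key}: the canonical surjection $A_k^\op\otimes_kB_k\twoheadrightarrow(A^\op\otimes_{k'}B)_k$ splits, so $(A^\op\otimes_{k'}A)_k$ is a direct summand of $A_k^\op\otimes_kA_k$ and in particular a compact module over it, and restriction along the splitting carries $A\in\cD_c(A^\op\otimes_{k'}A)$ to $A_k\in\cD_c(A_k^\op\otimes_kA_k)$ (Proposition~\ref{prop:restriction}). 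The ``smooth, proper, hence dualizable'' observation is true but by itself gives no splitting; in the language you favor, what is needed is that $k'$ is not merely dualizable but \emph{separable} as a commutative algebra object of $\Hmo(k)$, so that the forgetful functor from $k'$-modules is Frobenius monoidal rather than just lax monoidal. You should make that idempotent, or equivalently the Frobenius/ambidexterity structure on the adjunction, explicit before asserting that restriction lands in saturated dg categories.
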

The remaining of this section is devoted to the proof of Theorems~\ref{thm:basechange1} and \ref{thm:basechange2}.
\subsection{Proof of Theorem~\ref{thm:basechange1}}\label{sub:basechange1}
The classical functor $-\otimes_kk': \cC(k) \to \cC(k')$ between complexes of vector spaces is symmetric monoidal and hence gives naturally rise to a $\otimes$-functor $-\otimes_kk': \dgcat(k) \to \dgcat(k')$.
\begin{proposition}\label{prop:Morita}
The functor $-\otimes_kk':\dgcat(k) \to \dgcat(k')$ preserves derived Morita equivalences.
\end{proposition}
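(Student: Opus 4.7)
Fix a derived Morita equivalence $G: \cA \to \cB$ in $\dgcat(k)$; the plan is to show that $G' := G \otimes_k k'$ is again a derived Morita equivalence by verifying the two-part criterion of Proposition~\ref{prop:char}(iii) for $G'$: namely (a) that $G'$ satisfies condition~(i) of Definition~\ref{def:quasi-eq}, and (b) that every representable $(\cB \otimes_k k')$-module lies in the essential image of the functor $\bbL G'_!: \cD_c(\cA \otimes_k k') \to \cD_c(\cB \otimes_k k')$.

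Part (a) should be the easy half. The morphism complex in the tensor product equals $(\cA \otimes_k k')(x,y) = \cA(x,y) \otimes_k k'$, and analogously for $\cB$, so the map induced by $G'$ on morphism complexes is simply the map induced by $G$ tensored with $k'$ over $k$. Since any field extension is flat, $-\otimes_k k'$ is exact on $\cC(k)$ and hence preserves quasi-isomorphisms; condition~(i) of Definition~\ref{def:quasi-eq} for $G$ therefore transfers to $G'$.

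For part (b), the starting observation is that since $k'$, viewed as a one-object dg $k$-category, contributes only a single object, the object sets of $\cA \otimes_k k'$ and $\cB \otimes_k k'$ coincide with those of $\cA$ and $\cB$. Moreover, for each $b \in \cB$ there is a tautological identification of representable modules $h_{\cB \otimes_k k'}(b) \simeq h_\cB(b) \otimes_k k'$, where the right-hand side denotes the $(\cB \otimes_k k')$-module obtained by termwise extension of scalars. The hypothesis on $G$ will then supply some $X \in \cD_c(\cA)$ with $\bbL G_!(X) \simeq h_\cB(b)$ in $\cD_c(\cB)$, and I would propose $X \otimes_k k'$ as the preimage of $h_{\cB \otimes_k k'}(b)$ under $\bbL G'_!$.

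The main obstacle will be the verification of two compatibility statements underlying this last step: first, that $-\otimes_k k'$ sends $\cD_c(\cA)$ into $\cD_c(\cA \otimes_k k')$; and second, that there is a natural isomorphism $\bbL G'_!(X \otimes_k k') \simeq \bbL G_!(X) \otimes_k k'$. Compactness should survive base change because every object of $\cD_c(\cA)$ is a retract of a finite iterated extension of shifts of representables, and $-\otimes_k k'$ preserves each such construction while sending $h_\cA(a)$ to $h_{\cA \otimes_k k'}(a)$. The commutation isomorphism holds tautologically on representables, where both sides equal $h_\cB(Ga) \otimes_k k'$ for $X = h_\cA(a)$, and should extend to all of $\cD_c(\cA)$ by exactness of $-\otimes_k k'$ combined with a standard triangulated-category approximation argument. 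Combining these with $\bbL G_!(X) \simeq h_\cB(b)$ yields $\bbL G'_!(X \otimes_k k') \simeq h_{\cB \otimes_k k'}(b)$, completing part (b) and hence the proof.
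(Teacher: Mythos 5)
Your proof is correct, and while it reaches the same endpoint it is structured differently from the paper's. The paper first proves separately that $-\otimes_kk'$ preserves quasi-equivalences (using the natural isomorphism $\dgHo(G)\otimes_kk' \simeq \dgHo(G\otimes_kk')$), concludes that $\widehat{G}\otimes_kk'$ is a quasi-equivalence, and then invokes the two-out-of-three property of derived Morita equivalences in the commutative square $\widehat{G}\circ h_\cA = h_\cB\circ G$ to reduce the claim to the special case that $h\otimes_kk': \cA\otimes_kk' \to \widehat{\cA}\otimes_kk'$ is a derived Morita equivalence for every $\cA$; only then does it apply the criterion of Proposition~\ref{prop:char}(iii), writing an object of $\dgHo(\widehat{\cA}\otimes_kk')$ as $\overline{X}\otimes_kk'$ and lifting it through a base-change square relating $\bbL h_!$ and $\bbL(h\otimes_kk')_!$. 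You instead apply Proposition~\ref{prop:char}(iii) directly to $G\otimes_kk'$, using the analogous base-change square for $G$ rather than for $h$. This is a legitimate and arguably leaner route: it sidesteps the quasi-equivalence preliminary and the two-out-of-three reduction entirely. Both arguments ultimately hinge on the same unproved-but-true commutativity of the base-change square (the paper simply asserts it; you correctly observe that it holds tautologically on representables and extends to all of $\cD_c(\cA)$ by the fact that $\bbL G_!$, $\bbL(G\otimes_kk')_!$, and $-\otimes_kk'$ are all triangulated and commute with retracts). The paper's detour buys a more concrete special case (the Yoneda embedding), at the cost of an extra lemma; your version is essentially the same core idea applied without the reduction.
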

\begin{proof}
Let us start by proving that the functor $-\otimes_kk'$ preserves quasi-equivalences. Let $G: \cA \to \cB$ be a quasi-equivalence in $\dgcat(k)$. Since the functor $-\otimes_kk':\cC(k) \to \cC(k')$ preserves quasi-isomorphisms the dg functor $G\otimes_kk'$ satisfies condition (i) of Definition~\ref{def:quasi-eq}. In what concerns condition (ii) we have a natural (K{\"u}nneth) isomorphism $\dgHo(G)\otimes_kk' \simeq \dgHo(G\otimes_kk')$ of $k'$-linear functors. Hence, since by hypothesis $\dgHo(G)$ is essentially surjective we conclude that $\dgHo(G\otimes_kk')$ is also essentially surjective.  Now, let $G:\cA \to \cB$ be a derived Morita equivalence. Note that we have a commutative diagram
$$
\xymatrix{
*+<2.5ex>{\cA} \ar[d]_-h \ar[r]^-G & *+<2.5ex>{\cB} \ar[d]^-h \\
\widehat{\cA} \ar[r]_-{\widehat{G}} & \widehat{\cB}\,,
}
$$ 
where $h$ stands for the Yoneda dg functor; see Notation \ref{not:Yoneda}. By Proposition~\ref{prop:char}(ii) $\widehat{G}$ is a quasi-equivalence. Since the functor $-\otimes_kk': \dgcat(k) \to \dgcat(k')$ preserves quasi-equivalences we conclude that $\widehat{G}\otimes_kk'$ is also a quasi-equivalence and hence a derived Morita equivalence. By the two-out-of-three property of derived Morita equivalences it suffices then to show that the dg functor $h \otimes_kk': \cA \otimes_kk' \to \widehat{\cA}\otimes_kk'$ is a derived Morita equivalence for every dg category $\cA$. The same argument as above shows that $h\otimes_kk'$ satisfies condition (i) of Definition~\ref{def:quasi-eq}. By Proposition~\ref{prop:char}(iii) it remains only to show that every object $X\in \dgHo(\widehat{\cA}\otimes_kk') \subset \cD_c(\widehat{\cA}\otimes_kk')$ is in the essential image of the functor $\bbL(h\otimes_kk')_!: \cD_c(\cA\otimes_kk') \to \cD_c(\widehat{\cA}\otimes_kk')$. The natural isomorphism $\dgHo(\widehat{\cA})\otimes_kk' \simeq \dgHo(\widehat{\cA}\otimes_kk')$ of $k'$-linear categories allows us to write $X$ as $\overline{X}\otimes_kk'$, where $\overline{X}$ is the corresponding object of $\dgHo(\widehat{\cA})$. Moreover, since $h:\cA \hookrightarrow \widehat{\cA}$ is a derived Morita equivalence, there exists by Proposition~\ref{prop:char}(iii) an object $\overline{Y} \in \cD_c(\cA)$ and an isomorphism $\bbL h_!(\overline{Y}) \simeq \overline{X}$ in $\cD_c(\widehat{\cA})$. Now, note that we have the following commutative diagram
$$
\xymatrix{
\cD_c(\cA) \ar[d]_-{-\otimes_kk'} \ar[rr]^-{\bbL h_!} && \cD_c(\widehat{\cA}) \ar[d]^-{-\otimes_k k'} \,.\\
\cD_c(\cA\otimes_kk') \ar[rr]_-{\bbL(h\otimes_kk')_!} && \cD_c(\widehat{\cA}\otimes_kk')\,.
}
$$
By taking $Y:=\overline{Y}\otimes_kk'$ we then obtain
$$ \bbL(h\otimes_kk')_!(Y)\simeq \bbL h_!(\overline{Y})\otimes_kk' \simeq \overline{X}\otimes_kk' \simeq X$$
and so the proof is finished.
\end{proof}

Given dg categories $\cA$ and $\cB$ and a $\cA$-$\cB$-bimodule $X$, let us denote by $T(\cA,\cB;X)$ the dg category whose set of objects is the disjoint union of the sets of objects of $\cA$ and $\cB$ and whose morphisms are given by: $\cA(x,y)$ if $x,y \in \cA$; $\cB(x,y)$ if $x,y \in \cB$; $X(x,y)$ if $x \in \cA$ and $y \in \cB$; $0$ if $x \in \cB$ and $y \in \cA$. Composition is induced by the composition on $\cA$ and $\cB$, and by the $\cA$-$\cB$-bimodule structure of $X$. Note that we have two natural inclusion dg functors $\iota_\cA:\cA\to T(\cA,\cB;X)$ and $\iota_\cB:\cB \to T(\cA,\cB;X)$.

\begin{lemma}\label{lem:key2}
We have a natural identification 
$$ T(\cA,\cB;X)\otimes_kk' \simeq T(\cA\otimes_kk',\cB\otimes_kk';X\otimes_kk')$$
of dg $k'$-linear categories.
\end{lemma}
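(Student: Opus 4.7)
The plan is to prove the identification by direct inspection on objects, hom-complexes, and composition, since the base-change functor $-\otimes_kk'\colon\dgcat(k)\to\dgcat(k')$ acts as the identity on object sets and applies $-\otimes_kk'\colon\cC(k)\to\cC(k')$ hom-complex-wise.

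First, I would observe that both dg categories in the statement have the same set of objects, namely the disjoint union of $\mathrm{Ob}(\cA)$ and $\mathrm{Ob}(\cB)$ (this equals $\mathrm{Ob}(\cA\otimes_kk')\sqcup\mathrm{Ob}(\cB\otimes_kk')$ since extension of scalars does not modify objects). Next, I would perform the case analysis on a pair of objects $(x,y)$: if $x,y\in\cA$ then both sides give $\cA(x,y)\otimes_kk'$; if $x,y\in\cB$ then both sides give $\cB(x,y)\otimes_kk'$; if $x\in\cA$ and $y\in\cB$ then both sides give $X(x,y)\otimes_kk'$; and if $x\in\cB$ and $y\in\cA$ then both sides vanish (since $0\otimes_kk'=0$). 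This produces the required identification of hom-complexes.

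Finally, I would check that this identification is compatible with composition. On the left-hand side, composition is obtained by applying $-\otimes_kk'$ to the composition of $T(\cA,\cB;X)$, which itself is built from the compositions of $\cA$ and $\cB$ together with the $\cA$-$\cB$-bimodule structure of $X$. On the right-hand side, composition is built from the compositions of $\cA\otimes_kk'$ and $\cB\otimes_kk'$ together with the $(\cA\otimes_kk')$-$(\cB\otimes_kk')$-bimodule structure of $X\otimes_kk'$. Since the functor $-\otimes_kk'\colon\cC(k)\to\cC(k')$ is symmetric monoidal, it transports the compositions and the bimodule action on the nose, which yields the claimed identification of dg $k'$-linear categories.

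There is no genuine obstacle here: the statement is a purely formal unpacking of the definitions of $T(-,-;-)$ and of the base-change functor. I would therefore expect the proof to be a short verification rather than an argument with a real difficulty, and its role in the paper is simply to feed into the subsequent analysis of how base-change interacts with the constructions built out of $T(\cA,\cB;X)$.
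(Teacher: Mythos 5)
Your proof is correct and follows the same route as the paper's, which simply notes that $-\otimes_kk'$ does not alter the set of objects and that $X\otimes_kk'$ is naturally a $(\cA\otimes_kk')$-$(\cB\otimes_kk')$-bimodule. You have merely spelled out the case analysis on hom-complexes and the compatibility with composition, which the paper leaves implicit.
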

\begin{proof}
This follows from the fact the the functor $-\otimes_kk'$ does not alter the set of objects and from the fact that $X\otimes_kk'$ is naturally a $(\cA\otimes_kk')$-$(\cB\otimes_kk')$-bimodule.
\end{proof}
Now, recall from \cite[\S4]{survey} that a functor $E:\dgcat \to \mathsf{A}$, with values in an additive category, is called an {\em additive invariant} if it sends derived Morita equivalences to isomorphisms, and for every $\cA$, $\cB$ and $X$ (as above) the inclusion dg functors $\iota_\cA$ and $\iota_\cB$ give rise to an isomorphism
$$ E(\cA) \oplus E(\cB) \stackrel{\sim}{\too} E(T(\cA,\cB;X))\,.$$
Examples of additive invariants include algebraic $K$-theory, cyclic homology (and all its variants), topological Hochschild homology, etc. In \cite{IMRN} the universal additive invariant was constructed. It can be described as follows: let $\Hmo_0$ be the category whose objects are the dg categories and whose morphisms are given by $\Hom_{\Hmo_0}(\cA,\cB):=K_0\rep(\cA,\cB)$, where $\rep(\cA,\cB) \subset \cD(\cA^\op \otimes \cB)$ is the full triangulated subcategory of those $\cA\text{-}\cB$-bimodules $X$ such that $X(-,x) \in \cD_c(\cB)$ for every object $x\in \cA$. The composition law is induced by the tensor product of bimodules. We have a natural functor
\begin{equation}\label{eq:universal}
\dgcat \too \Hmo_0
\end{equation}
which is the identity on objects and which maps a dg functor to the class (in the Grothendieck group) of the naturally associated bimodule. The category $\Hmo_0$ is additive and the functor \eqref{eq:universal} is an additive invariant. Moreover, it is characterized by the following universal property:
\begin{theorem}[see \cite{IMRN}]\label{thm:universal}
Given an additive category $\mathsf{A}$, the above functor \eqref{eq:universal} induces an equivalence of categories
$$ \Fun_{\mathsf{add}}(\Hmo_0,\mathsf{A}) \stackrel{\sim}{\too} \Fun_{\mathsf{add\, inv}}(\dgcat,\mathsf{A})\,,$$
where the left hand-side denotes the category of additive functors and the right hand-side the category of additive invariants.
\end{theorem}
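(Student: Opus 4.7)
The plan is to construct an explicit quasi-inverse to the restriction functor
\[
\Fun_{\mathsf{add}}(\Hmo_0,\mathsf{A}) \too \Fun_{\mathsf{add\, inv}}(\dgcat,\mathsf{A})
\]
given by precomposition with the functor $U$ of \eqref{eq:universal}. First I would verify that $U$ itself is an additive invariant: derived Morita equivalences are sent to isomorphisms, since the bimodule associated to such a dg functor becomes invertible in $K_0\rep$ with inverse supplied by the quasi-inverse bimodule; and for the upper-triangular $T(\cA,\cB;X)$ the two inclusion dg functors $\iota_\cA$, $\iota_\cB$ assemble, via the evident semi-orthogonal decomposition, into a direct sum decomposition at the level of $K_0$ of bimodule categories.

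To build the quasi-inverse $\Phi$, fix an additive invariant $E:\dgcat\to\mathsf{A}$. On objects set $\Phi(E)(\cA) := E(\cA)$. On morphisms, given $X \in \rep(\cA,\cB)$, apply $E$ to the $T$-construction to obtain, by the additivity axiom, an isomorphism
\[
E(\cA) \oplus E(\cB) \isotoo E\bigl(T(\cA,\cB;X)\bigr)
\]
induced by $(E(\iota_\cA), E(\iota_\cB))$. Comparing this with the analogous isomorphism for the trivial bimodule $X=0$ (where $T(\cA,\cB;0) \simeq \cA \sqcup \cB$) through the canonical identity-on-objects dg functor $T(\cA,\cB;0) \to T(\cA,\cB;X)$, one reads off a unique off-diagonal morphism $\phi(X): E(\cA) \to E(\cB)$, which is the prospective value of $\Phi(E)$ on the class $[X]$.

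The remaining checks are: (i) $\phi$ is additive on distinguished triangles of representations, hence descends to $K_0\rep(\cA,\cB)$, proved by constructing an iterated upper-triangular dg category encoding a triangle $X\to Y\to Z$ and extracting, from the resulting three-fold additive decomposition of $E$, the relation $\phi(Y) = \phi(X)+\phi(Z)$; (ii) $\phi$ is compatible with composition of bimodules $Y \in \rep(\cA,\cB)$, $Z \in \rep(\cB,\cC)$, verified by a similar $T$-construction on three dg categories and a double application of the additivity axiom together with the Morita invariance applied to the bimodule $Y\otimes_\cB^{\bbL} Z$; (iii) $\Phi(E)\circ U = E$ on morphisms, because for an honest dg functor $G:\cA\to\cB$ the associated bimodule makes the corresponding $T$-category Morita equivalent to a ``mapping-cylinder'' dg category under which the extracted off-diagonal is identified with $E(G)$; (iv) $\Phi$ and restriction are mutually inverse, which on the other side is automatic since every morphism in $\Hmo_0(\cA,\cB)$ is by definition a $\bbZ$-linear combination of bimodule classes, all covered by the formula for $\phi$.

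The main obstacle is step (i): one must iterate the $T$-construction to encode a triangle $X\to Y\to Z$ inside a single dg category and argue that the resulting three-fold decomposition of $E(T)$ propagates to the linear relation $\phi(Y) = \phi(X)+\phi(Z)$ in $\mathsf{A}$. This is exactly where the full strength of the additive-invariance hypothesis -- split additivity on $T$-constructions \emph{combined} with Morita invariance -- is needed, and getting the bookkeeping right for the semi-orthogonal filtrations is where the technical work of the proof will concentrate.
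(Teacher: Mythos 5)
The central step of your proposal fails. The additivity isomorphism $E(\cA)\oplus E(\cB) \xrightarrow{\ \sim\ } E(T(\cA,\cB;X))$ is by definition the one induced by $(E(\iota_\cA),E(\iota_\cB))$, and the identity-on-objects dg functor $\jmath\colon T(\cA,\cB;0)\to T(\cA,\cB;X)$ against which you propose to compare satisfies $\jmath\circ\iota_\cA=\iota_\cA$ and $\jmath\circ\iota_\cB=\iota_\cB$. Hence, after transporting $E(\jmath)$ along the two additivity isomorphisms, you obtain precisely the identity matrix on $E(\cA)\oplus E(\cB)$; there is no off-diagonal entry to be read off, and your recipe yields $\phi(X)=0$ for every $X$. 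This is already wrong when $X$ is the bimodule associated to a nontrivial dg functor $G\colon\cA\to\cB$, for which $\phi(X)$ must be $E(G)$.

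The underlying difficulty is that a general class in $K_0\rep(\cA,\cB)$ is not of the form $[X_G]$ for any strict dg functor $G$, and all the strict dg functors involving $T(\cA,\cB;X)$ point \emph{into} $T$, never out; split additivity on $T$-constructions therefore cannot by itself manufacture a morphism $E(\cA)\to E(\cB)$. What is missing is the identification of $\Hom_{\Hmo}(\cA,\cB)$ with $\mathrm{Iso}\,\rep(\cA,\cB)$, which rests on the Morita model structure on $\dgcat(k)$ and To\"en's description of its mapping spaces: Morita invariance lets $E$ descend to $\Hmo$, the model structure tells you what $\Hmo$'s morphisms actually are, and only then does additivity on $T$-constructions show the induced assignment depends on $[X]\in K_0$ alone. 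With that input your steps (i)--(iv) have the right shape. Note also that the paper under discussion does not prove this statement at all; it quotes it from the reference [IMRN], so there is no in-paper argument against which to compare.
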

By combining Theorem~\ref{thm:universal} with Proposition \ref{prop:Morita} and Lemma \ref{lem:key2} we obtain then a well-defined additive functor
\begin{equation}\label{eq:functor-Hmo}
-\otimes_kk': \Hmo_0(k) \too \Hmo_0(k')\,,
\end{equation}
which is moreover symmetric monoidal.
\begin{proposition}\label{prop:induced}
The functor \eqref{eq:functor-Hmo} gives rise to a $F$-linear $\otimes$-functor
\begin{equation}\label{eq:functor1}
 -\otimes_kk':\NChow(k)_F \too \NChow(k')_F\,.
\end{equation}
\end{proposition}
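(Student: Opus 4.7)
The plan is to deduce \eqref{eq:functor1} from \eqref{eq:functor-Hmo} by three purely formal operations: restriction to the full subcategory of saturated dg categories, $F$-linearization of Hom groups, and passage to the pseudo-abelian envelope. Indeed, unpacking the construction of $\NChow(-)_F$ from \S\ref{sub:Chow}, and using the standard fact that $K_0\rep(\cA,\cB)$ coincides with $K_0(\cA^{\op}\otimes_k\cB)$ when $\cA$ and $\cB$ are saturated, one identifies $\NChow(k)_F$ (and analogously $\NChow(k')_F$) with the pseudo-abelian envelope of the $F$-linearization of the full subcategory of $\Hmo_0(k)$ spanned by the saturated dg categories. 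Thus the whole statement reduces to checking that the functor $-\otimes_k k'$ sends saturated dg $k$-categories to saturated dg $k'$-categories.

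This is the main step. Let $\cA$ be saturated in the sense of \S\ref{sub:saturated}. The cohomological finiteness condition is preserved because flatness of $k'$ over $k$ together with K\"unneth yields $H^i((\cA\otimes_k k')(x,y))\simeq H^i(\cA(x,y))\otimes_k k'$, and hence
\[
\sum_i \dim_{k'} H^i((\cA\otimes_k k')(x,y)) \;=\; \sum_i \dim_k H^i(\cA(x,y)) \;<\;\infty .
\]
For the compactness of the identity bimodule, the canonical isomorphism $(\cA\otimes_k k')^{\op}\otimes_{k'}(\cA\otimes_k k')\simeq (\cA^{\op}\otimes_k\cA)\otimes_k k'$ carries $(\cA\otimes_k k')(-,-)$ to $\cA(-,-)\otimes_k k'$, so it suffices to know that $-\otimes_k k':\cD(\cB)\to\cD(\cB\otimes_k k')$ preserves compact objects for any dg $k$-category $\cB$. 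This holds because $-\otimes_k k'$ is exact and sends the set of compact generators $\{\cB(-,x)\}_x$ to the compact generators $\{(\cB\otimes_k k')(-,x)\}_x$ of the target, a fact that can also be extracted from the proof of Proposition~\ref{prop:Morita}. I expect this preservation of compactness, though essentially standard, to be the step demanding the most care.

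The symmetric monoidal structure is then automatic: the canonical coherence isomorphism $(\cA\otimes_k\cB)\otimes_k k' \simeq (\cA\otimes_k k')\otimes_{k'}(\cB\otimes_k k')$ makes $-\otimes_k k':\dgcat(k)\to\dgcat(k')$ a symmetric monoidal functor, and by the universal property of Theorem~\ref{thm:universal} this structure descends to \eqref{eq:functor-Hmo} and then survives the $F$-linearization and pseudo-abelian envelope producing \eqref{eq:functor1}. Finally, $F$-linearity is built into the construction.
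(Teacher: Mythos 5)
Your proof is correct, but the key step is handled differently from the paper. To show that $-\otimes_k k'$ preserves saturated dg categories, you verify the two defining conditions directly: cohomological finiteness of the Hom complexes (via flatness of $k'/k$ and K\"unneth) and compactness of the diagonal bimodule (via the isomorphism $(\cA\otimes_k k')^{\op}\otimes_{k'}(\cA\otimes_k k')\simeq (\cA^{\op}\otimes_k\cA)\otimes_k k'$ and the fact that $-\otimes_k k'$ sends the compact generators $\cB(-,x)$ to compact generators, hence preserves the thick subcategory they generate). The paper instead invokes the abstract characterization from \S\ref{sub:saturated} that saturated dg categories are precisely the dualizable objects of $\Hmo_0$, and concludes at once because symmetric monoidal functors preserve dualizable objects. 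Your route is more self-contained and computational, requiring no appeal to the dualizability theorem of \cite[Thm.~5.8]{CT1}; the paper's route is shorter and conceptually cleaner, but conceals the explicit checks inside a black box. Both proofs then conclude identically by passing from $\sHmo_0$ to $\NChow$ via $F$-linearization of Hom groups (using $K_0\rep(\cA,\cB)\simeq K_0(\cA^{\op}\otimes_k\cB)$ for saturated $\cA,\cB$) and pseudo-abelian completion. One small remark: your final sentence attributes the descent of the monoidal structure to Theorem~\ref{thm:universal}, but that theorem concerns only additivity; the monoidal structure on \eqref{eq:functor-Hmo} was already established in the paper just before Proposition~\ref{prop:induced} and is then preserved by the formal operations.
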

\begin{proof}
Recall from \S\ref{sub:saturated} that the saturated dg categories are precisely the dualizable (or rigid) objects of the homotopy category $\Hmo_0$ of dg categories. Hence, since the functor $-\otimes_kk':\dgcat(k) \to \dgcat(k')$ is symmetric monoidal (and by Proposition~\ref{prop:Morita} preserves derived Morita equivalences) we conclude that it preserves saturated dg categories. Therefore, if we denote by $\sHmo_0 \subset \Hmo_0$ the full subcategory of saturated dg categories, \eqref{eq:functor-Hmo} restricts to a $\otimes$-functor $-\otimes_kk':\sHmo_0(k) \to \sHmo(k')$. Given saturated dg categories $\cA$ and $\cB$, we have a natural equivalence of triangulated categories $\rep(\cA,\cB)\simeq \cD_c(\cA^\op \otimes\cB)$ and hence a group isomorphism $K_0\rep(\cA,\cB)\simeq K_0(\cA^\op\otimes\cB)$. The categories $\NChow(k)_F$ and $\NChow(k')_F$ can then be obtained from $\sHmo_0(k)$ and $\sHmo_0(k')$ by first tensoring each abelian group of morphisms with the field $F$ and then passing to the associated pseudo-abelian envelope. As a consequence, the $\otimes$-functor $-\otimes_kk':\sHmo_0(k) \to \sHmo_0(k')$ gives rise to the $\otimes$-functor \eqref{eq:functor1}.
\end{proof}
Now, recall from \S\ref{sub:numerical} the description of the category $\NNum(k)_F$ of noncommutative numerical motives which uses saturated dg algebras and the bilinear form $\chi$. Recall also that every saturated dg category is derived Morita equivalent to a saturated dg algebra. The category $\NNum(k)_F$ can then be obtained from $\sHmo_0(k)$ (see the proof of Proposition~\ref{prop:induced}) by first quotienting out by the $\otimes$-ideal $\mathrm{Ker}(\chi)$, then tensoring each abelian group of morphisms with the field $F$, and finally passing to the associated pseudo-abelian envelope. In order to obtain from \eqref{eq:functor1} the $F$-linear $\otimes$-functor $-\otimes_kk':\NNum(k)_F \to \NNum(k')_F$, it suffices then to show that the functor $-\otimes_kk':\sHmo_0(k) \to \sHmo_0(k')$ preserves the $\otimes$-ideal $\mathrm{Ker}(\chi)$, \ie given any two saturated dg $k$-algebras $A$ and $B$ one needs to show that the homomorphism
\begin{equation}\label{eq:morphism}
\xymatrix@C=3em@R=.7em{
\Hom_{\sHmo_0(k)}(A,B) \ar@{=}[d] \ar[r] & \Hom_{\sHmo_0(k')}(A\otimes_k k', B \otimes_k k')  \ar@{=}[d] \\
K_0(A^\op\otimes_k B) \ar[r] & K_0(A^\op\otimes_kB\otimes_kk')
}
\end{equation}
preserves the kernel of the bilinear form $\chi$. Since $K_0(A^\op\otimes_kB)$ is generated by the elements of shape $[X]$, with $X \in \cD_c(A^\op \otimes_kB)$, and similarly $K_0(A^\op\otimes_kB\otimes_kk')$ is generated by the elements $[Y]$, with $Y \in \cD_c(A^\op\otimes_kB\otimes_kk')$, it suffices to prove the following implication:
\begin{equation}\label{eq:implication}
\textrm{
$[X] \in \mathrm{Ker}(\chi) \Rightarrow \chi([X\otimes_kk'],[Y])=0 \,\, \textrm{for all} \,\,Y \in \cD_c(A^\op\otimes_kB\otimes_kk')$.
}
\end{equation}
The proof of this implication is divided into a finite and an infinite case. Let us assume first that the field extension $k'/k$ is finite.
\begin{lemma}\label{lem:compact}
Given a saturated dg $k$-algebra $C$, we have an adjunction
\begin{equation}\label{eq:adjunction}
\xymatrix{
\cD_c(C\otimes_kk') \ar@<1ex>[d]^-R \\
\cD_c(C) \ar@<1ex>[u]^-{-\otimes_kk'}\,.
}
\end{equation}
Moreover, the equality $\chi([X],[R(Y)])=[k':k]\cdot \chi([X\otimes_kk'], [Y])$ holds for every $X \in \cD_c(C)$ and $Y \in \cD_c(C\otimes_kk')$.
\end{lemma}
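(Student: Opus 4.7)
The plan is to construct the adjunction \eqref{eq:adjunction} as the standard extension/restriction of scalars along the canonical dg algebra map $\phi\colon C \to C\otimes_k k'$, and then derive the numerical identity by combining the $\Hom$-adjunction with the elementary relation between $k$-dimensions and $k'$-dimensions of a $k'$-vector space.

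First I would observe that, since $k$ is a field, $k'$ is flat over $k$, so extension of scalars $-\otimes_k k'\simeq -\otimes^{\bbL}_{C}(C\otimes_k k')$ is well-defined at the derived level and is left adjoint to the restriction functor $R=\phi^*$ on the full derived categories $\cD(C)$ and $\cD(C\otimes_k k')$. To obtain the adjunction on compact objects, it remains to show that both functors preserve compactness. Extension of scalars does so automatically, while $R$ does so provided that $C\otimes_k k'$ is perfect as a (right) $C$-module. This is where the finiteness hypothesis enters: since $k'$ is finite-dimensional over $k$, a choice of $k$-basis of $k'$ exhibits the dg $C$-module $C\otimes_k k'$ as a finite sum of shifted copies of $C$, making it perfect; in particular $R$ sends $\cD_c(C\otimes_k k')$ into $\cD_c(C)$.

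For the Euler-form identity, I would apply the $\Hom$-adjunction to get a natural isomorphism
$$\Hom_{\cD_c(C)}(X,R(Y)[n]) \simeq \Hom_{\cD_c(C\otimes_kk')}(X\otimes_kk',Y[n])$$
for every $n\in\bbZ$. The right-hand side carries a canonical $k'$-vector space structure coming from the $k'$-linear structure of $C\otimes_k k'$, so its $k$-dimension equals $[k':k]$ times its $k'$-dimension. Multiplying by $(-1)^n$ and summing over $n$ (these are finite sums since $C$ and $C\otimes_k k'$ are saturated) yields the stated formula $\chi([X],[R(Y)])=[k':k]\cdot\chi([X\otimes_k k'],[Y])$.

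The only delicate point I anticipate is the verification that the standard perfectness argument translates to the dg setting; but as noted above, once one picks a $k$-basis of $k'$ the compactness of $R$ is completely transparent, so this obstacle is really only bookkeeping. The identification of $k$-dimensions with $[k':k]$ times the $k'$-dimensions is then elementary, and the adjunction does all the real work.
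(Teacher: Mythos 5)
Your proposal follows essentially the same route as the paper: construct the adjunction at the level of full derived categories via extension and restriction of scalars along $C\to C\otimes_kk'$, show both functors preserve compacts (with $R$ preserving compacts precisely because a choice of $k$-basis of $k'$ identifies $R(C\otimes_kk')$ with $C^{\oplus[k':k]}$), and then deduce the Euler-form identity from the $\Hom$-adjunction together with $\dim_k = [k':k]\cdot\dim_{k'}$. The only cosmetic difference is your phrase ``finite sum of shifted copies of $C$'' --- no shifts actually occur since $k'$ is concentrated in degree zero --- but this does not affect the argument.
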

\begin{proof}
Consider the natural adjunction
\begin{equation}\label{eq:adjunction1}
\xymatrix{
 \cD(C\otimes_kk') \ar@<1ex>[d]^-R \\
  \cD(C) \ar@<1ex>[u]^-{-\otimes_kk'}\,,
}
\end{equation}
where $R$ denotes the functor obtained by composing the equivalence $\cD(C\otimes_kk')\simeq \cD((C\otimes_kk')_k)$ with the restriction along the canonical map $C \to (C\otimes_kk')_k$. The fact that the functor $-\otimes_kk'$ restricts to compact objects is clear. In what concerns $R$, this follows from the isomorphism 
$$R(C\otimes_kk') \simeq \underbrace{C \oplus \cdots \oplus C}_{[k':k]\text{-}\mathrm{times}}$$
 in $\cD(C)$ and from the fact that the field extension $k'/k$ is finite. The adjunction \eqref{eq:adjunction} is then obtained by restricting \eqref{eq:adjunction1} to compact objects. By adjunction we have a natural isomorphism of $k$-vector spaces
$$ \Hom_{\cD_c(C)}(X,R(Y)[n]) \simeq \Hom_{\cD_c(C\otimes_kk')}(X\otimes_kk',Y[n])$$
for every $X \in \cD_c(C)$, $Y \in \cD_c(C\otimes_kk')$ and $n\in \bbZ$. Moreover, the equality holds
$$\mathrm{dim}_k \Hom_{\cD_c(C\otimes_kk')}(X\otimes_kk',Y[n]) =  [k':k]\cdot \mathrm{dim}_{k'} \Hom_{\cD_c(C\otimes_kk')}(X\otimes_kk',Y[n])\,.$$
By definition of the bilinear form \eqref{eq:bilinear} we then obtain the searched equality.
\end{proof}
By applying Lemma~\ref{lem:compact} to the saturated dg $k$-algebra $C:=A^\op\otimes_kB$, one obtains then the equality $\chi([X],[R(Y)])=[k':k]\cdot \chi([X\otimes_kk'], [Y])$ for every $X \in \cD_c(A^\op\otimes_kB)$ and $Y \in \cD_c(A^\op\otimes_kB\otimes_kk')$. Hence, since $[k':k]\neq 0$ one concludes that the above implication \eqref{eq:implication} holds.

Let us now assume that $k'/k$ is an infinite field extension. Note that $k'$ identifies with the colimit of the filtrant diagram $\{k_i\}_{i \in I}$ of intermediate field extensions $k'/k_i/k$ which are finite over $k$. Given a saturated dg $k$-algebra $C$ one obtains then (by applying the usual strictification procedure) a filtrant diagram $\{\cD_c(C\otimes_kk_i)\}_{i \in I}$ of triangulated categories. Let us denote by $\mathrm{colim}_{i\in I} \cD_c(C\otimes_kk_i)$ its colimit and by $\iota_i: \cD_c(C\otimes_kk_i) \to \mathrm{colim}_{i\in I}\cD_c(C\otimes_kk_i)$ the corresponding functors. The functors $-\otimes_{k_i}k': \cD_c(C\otimes_kk_i) \to \cD_c(C\otimes_kk'), i \in I$, give rise to a well-defined functor
\begin{equation*}
\Gamma: \mathrm{colim}_{i \in I} \cD_c(C\otimes_kk_i) \too \cD_c(C\otimes_kk')\,.
\end{equation*}
\begin{proposition}\label{prop:key} 
The functor $\Gamma$ is an equivalence. Moreover, given objects $X$ and $Y$ in $\cD_c(C\otimes_kk')$, there exists an element $j \in I$ and objects $X_j, Y_j \in \cD_c(C\otimes_kk_j)$ such that $X_j \otimes_{k_j}k'\simeq X, Y_j \otimes_{k_j}k' \simeq Y$ and $\chi([X_j],[Y_j])=\chi([X],[Y])$.
\end{proposition}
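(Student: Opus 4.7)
The plan is to decouple the equivalence assertion from the compatibility with $\chi$ and to use compactness of perfect objects throughout. The starting observation is that $k' = \mathrm{colim}_{i\in I} k_i$ and $C\otimes_k(-)$ preserves filtered colimits, so $C\otimes_k k' \simeq \mathrm{colim}_{i\in I}(C\otimes_k k_i)$ as dg algebras. The equivalence $\Gamma$ is then an instance of the continuity of the functor $\cD_c$ with respect to filtered colimits of dg algebras. For essential surjectivity, every $X\in\cD_c(C\otimes_k k')$ is quasi-isomorphic to a strictly perfect (finite cell) complex, whose finitely many structure maps involve only finitely many elements of $k'$; all of these lie in some $k_j$, yielding $X_j\in\cD_c(C\otimes_k k_j)$ with $X_j\otimes_{k_j} k'\simeq X$. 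For fully-faithfulness, after pulling back a pair of objects over different $k_i$'s to a common $k_\ell$ (using filteredness), compactness of the source gives the base-change identity
$$\Hom_{\cD_c(C\otimes_k k_\ell)}(X_j\otimes_{k_j}k_\ell,\,Y_j\otimes_{k_j}k_\ell)\simeq \Hom_{\cD_c(C\otimes_k k_j)}(X_j,Y_j)\otimes_{k_j}k_\ell,$$
and taking the colimit over $\ell\geq j$ identifies the left-hand side with the Hom-set in $\cD_c(C\otimes_k k')$.

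For the second assertion, given $X,Y\in\cD_c(C\otimes_k k')$, essential surjectivity provides lifts over some $k_{j_1},k_{j_2}$; by filteredness I choose a common $j\geq j_1,j_2$, and base-change both lifts to objects $X_j,Y_j\in\cD_c(C\otimes_k k_j)$ with $X_j\otimes_{k_j}k'\simeq X$, $Y_j\otimes_{k_j}k'\simeq Y$. Applying the displayed base-change isomorphism with target $Y_j[n]$ yields
$$\Hom_{\cD_c(C\otimes_k k')}(X,Y[n])\simeq \Hom_{\cD_c(C\otimes_k k_j)}(X_j,Y_j[n])\otimes_{k_j}k',$$
so that $\dim_{k'}\Hom_{\cD_c(C\otimes_k k')}(X,Y[n]) = \dim_{k_j}\Hom_{\cD_c(C\otimes_k k_j)}(X_j,Y_j[n])$ for every $n\in\bbZ$. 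Since $X_j$ and $Y_j$ are both perfect, only finitely many $n$ contribute nonzero terms, and summing with alternating signs gives the desired equality $\chi([X_j],[Y_j]) = \chi([X],[Y])$ of bilinear forms (computed over $k_j$ and $k'$ respectively).

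The main obstacle is the continuity statement for $\cD_c$ under filtered colimits of dg algebras: one must justify both that perfect complexes over $C\otimes_k k'$ descend to some finite level $k_j$, and that the base-change map on Hom-sets is an isomorphism after passing to the colimit. This is classical (via the identification of perfect modules with strictly perfect complexes, or by compact-generation arguments in the style of Neeman--Thomason), but it requires care at the level of dg-models to ensure that the strict tensor products $C\otimes_k k_\ell$ correctly model the homotopy base-change. Once this is granted, the remainder of the argument reduces to the elementary fact that $-\otimes_{k_j}k'$ preserves the dimension of any finite-dimensional $k_j$-vector space.
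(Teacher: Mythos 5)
Your proof is correct and shares the same overall architecture as the paper's: both establish that $\Gamma$ is an equivalence by exploiting the compactness/perfectness of the objects in $\cD_c(C\otimes_kk')$ together with a flat base-change isomorphism for $\mathrm{Hom}$-groups, and both deduce the $\chi$-compatibility from the resulting identity $\Hom_{\cD_c(C\otimes_kk')}(X,Y[n]) \simeq \Hom_{\cD_c(C\otimes_kk_j)}(X_j,Y_j[n])\otimes_{k_j}k'$ and dimension-counting.

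There are, however, two genuine (if modest) differences in implementation worth flagging. For essential surjectivity, you descend a strictly perfect (finite-cell) representative to some $k_j$ by observing that its finitely many attaching data involve only finitely many elements of $k'$; this is more hands-on than the paper, which instead reduces to the single generator $C\otimes_kk'$ and invokes the already-established fully-faithfulness of $\Gamma$ to transfer thickness from $\mathrm{colim}_i\cD_c(C\otimes_kk_i)$. Your route has the advantage of not relying on fully-faithfulness first, but it does require the (true but not entirely trivial for dg algebras) fact that every compact object is quasi-isomorphic to a finite cell module, a point you rightly flag as needing a careful dg-model argument. For fully-faithfulness, you apply the flat base-change isomorphism $\Hom_{C\otimes_kk_j}(X_j,Y_j)\otimes_{k_j}k_\ell \simeq \Hom_{C\otimes_kk_\ell}(X_j\otimes_{k_j}k_\ell,Y_j\otimes_{k_j}k_\ell)$ stage by stage and pass to the colimit, while the paper instead uses the $(-\otimes_{k_j}k_i) \dashv R$ adjunction, commutes $\Hom(O_j,-)$ past the filtered colimit by compactness, and checks the resulting natural transformation $\gamma$ on the generator $C\otimes_kk_j$. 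The two arguments carry the same content; yours makes the base-change isomorphism the visible engine throughout (including in the second assertion), which gives the proof a pleasing uniformity, at the cost of having to justify that isomorphism directly rather than deriving it from the adjunction-plus-compactness argument as the paper does.

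Both versions leave the same technical points to the reader (strictification of the filtered diagram of dg algebras, the precise dg model for $-\otimes_{k_j}k'$); your proof is no less rigorous than the paper's on these counts, and you are explicit about where the care is needed.
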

\begin{proof}
We start by proving that $\Gamma$ is fully-faithful. Let $O$ and $P$ be two objects of $\mathrm{colim}_{i \in I} \cD_c(C\otimes_kk_i)$. By construction, there is an element $j \in I$ and objects $O_j, P_j \in \cD_c(C\otimes_kk_j)$ such that $\iota_j(O_j)=O$ and $\iota_j(P_j)=P$. Moreover, we have the following equality
$$ \Hom_{\mathrm{colim}_{i \in I} \cD_c(C\otimes_kk_i)}(O,P) = \mathrm{colim}_{j \downarrow I} \Hom_{\cD_c(C\otimes_kk_i)}(O_j\otimes_{k_j}k_i,P_j\otimes_{k_j}k_i)\,,$$
where $j \downarrow I:=\{j \to i | i \in I\}$ denotes the category of objects under $j$. On the other hand we have the equality
$$ \Hom_{\cD_c(C\otimes_kk')}(\Gamma(O),\Gamma(P))=\Hom_{\cD_c(C\otimes_kk')}(O_j\otimes_{k_j}k',P_j\otimes_{k_j}k')\,.$$
Now, note that by adjunction $\mathrm{colim}_{j \downarrow I} \Hom_{\cD_c(C\otimes_kk_i)}(O_j\otimes_{k_j}k_i,P_j\otimes_{k_j}k_i)$ identifies with $\mathrm{colim}_{j \downarrow I} \Hom_{\cD_c(C\otimes_kk_j)}(O_j,P_j\otimes_{k_j}k_i)$. Moreover, since $O_j$ is a compact object this colimit agrees with $\Hom_{\cD(C\otimes_kk_j)}(O_j,\mathrm{colim}_{j \downarrow I}P_j\otimes_{k_j}k_i)$. Similarly, we have 
$$ \Hom_{\cD_c(C\otimes_kk')}(\Gamma(O),\Gamma(P))\simeq\Hom_{\cD(C\otimes_kk_j)}(O_j,P_j\otimes_{k_j}k')\,.$$
Hence, it suffices to show that the induced morphism 
$$\gamma(P_j): \mathrm{colim}_{j \downarrow I}\, P_j \otimes_{k_j}k_i \to P_j \otimes_{k_j}k'$$
is an isomorphism. Observe that we have a natural transformation
$$ \gamma(-): \mathrm{colim}_{j \downarrow I}\, - \otimes_{k_j}k_i \Rightarrow - \otimes_{k_j}k'$$
between two triangulated endofunctors of $\cD(C\otimes_kk_j)$. Since $C\otimes_kk_j$ generates the triangulated category $\cD(C\otimes_kk_j)$ and $P_j$ is a compact object it suffices then to show that $\gamma(C\otimes_kk_j)$ is an isomorphism. This is the case since this morphism identifies with the natural isomorphism $\mathrm{colim}_{j \downarrow I} \, C\otimes_kk_i \stackrel{\sim}{\to} C\otimes_kk'$. 

Let us now prove that $\Gamma$ is essentially surjective. By construction, every object of $\cD_c(C\otimes_kk')$ is obtained from $C\otimes_kk'$ by a finite number of (de)suspensions, extensions and retracts. Since the functors $-\otimes_{k_i}k':\cD_c(C\otimes_kk_i) \to \cD_c(C\otimes_kk'), i \in I$, are triangulated and $\Gamma$ is fully-faithful, it suffices then to show that $C\otimes_kk'$ is in the (essential) image of $\Gamma$. This is clearly the case and so the proof that $\Gamma$ is an equivalence is finished.

Now, let $X$ and $Y$ be two objects of $\cD_c(C\otimes_kk')$. Since $\Gamma$ is an equivalence there exists an element $j \in I$ and objects $X_j,Y_j \in \cD_c(C\otimes_kk_j)$ such that $X_j \otimes_{k_j}k' \simeq X$ and $Y_j \otimes_{k_j}k'\simeq Y$. Moreover, we have the following isomorphism
$$ \Hom_{\cD_c(C\otimes_kk')}(X,Y[n]) \simeq \mathrm{colim}_{j \downarrow I}\Hom_{\cD_c(C\otimes_{k_j}k_i)}(X_j \otimes_{k_j}k_i, (Y_j \otimes_{k_j}k_i)[n])$$
for every $n \in \bbZ$. Since $k'$ is the colimit of the diagram $\{k_i\}_{i \in I}$ and
$$\Hom_{\cD_c(C\otimes_{k_j}k_i)}(X_j \otimes_{k_j}k_i, (Y_j \otimes_{k_j}k_i)[n]) \simeq \Hom_{\cD_c(C\otimes_kk_j)}(X_j,Y_j[n]) \otimes_{k_j}k_i$$
for every $n \in \bbZ$, we conclude then that 
$$\Hom_{\cD_c(C\otimes_kk')}(X,Y[n]) \simeq \Hom_{\cD_c(C\otimes_kk_j)}(X_j, Y_j[n])\otimes_{k_j}k'\,.$$
As a consequence, the dimension $ \mathrm{dim}_{k'}\Hom_{\cD_c(C\otimes_kk')}(X,Y[n])$ agrees with the dimension $\mathrm{dim}_{k_j}\Hom_{\cD_c(C\otimes_kk_j)}(X_j,Y_j[n])$ and so by definition of the bilinear form \eqref{eq:bilinear} we obtain the equality $\chi([X],[Y])=\chi([X_j],[Y_j])$.
\end{proof}
By combining Proposition~\ref{prop:key} (applied to $C=A^\op \otimes_k B$) with Lemma~\ref{lem:compact} (applied to $k'=k_j$ and $C=A^\op \otimes_k B$) we obtain the following equalities
$$ \chi([X],[R(Y_j)])=[k_j:k]\cdot \chi([X\otimes_kk_j],[Y_j])=[k_j:k] \cdot \chi([X\otimes_kk'],[Y])\,.$$
Since $[k_j:k]\neq 0$ one concludes then that the above implication \eqref{eq:implication} holds and so we obtain the $F$-linear $\otimes$-functor $-\otimes_kk': \NNum(k)_F \to \NNum(k')_F$.

Now, recall from \S\ref{sub:homological} that periodic cyclic homology $HP$ gives rise to a well-defined $F$-linear $\otimes$-functor $\overline{HP_\ast}: \NChow(k)_F \to \sVect(K)$.
\begin{proposition}\label{prop:HP}
The following diagrams commute (up to natural isomorphism)
\begin{equation}\label{eq:squares}
\xymatrix{
\NChow(k)_F \ar[d]_-{\overline{HP_\ast}} \ar[r]^-{-\otimes_kk'} & \NChow(k')_F \ar[d]^{\overline{HP_\ast}} & \NChow(k)_F \ar[dr]_-{\overline{HP_\ast}} \ar[r]^-{-\otimes_kk'} &  \NChow(k')_F \ar[d]^-{\overline{HP_\ast}}  \\
\mathrm{sVect}(k) \ar[r]_-{-\otimes_kk'}& \mathrm{sVect}(k')& &\mathrm{sVect}(F)\,. \\
}
\end{equation}
The left hand-side diagram concerns the case where $k$ is a field extension of $F$ and the right hand-side diagram the case where $F$ is a field extension of $k'$.
\end{proposition}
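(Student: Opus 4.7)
The plan is to reduce the commutativity of both squares to a single natural base-change isomorphism
$$HP_\ast(A \otimes_k k') \simeq HP_\ast(A) \otimes_k k'$$
of super $k'$-vector spaces, valid for every saturated dg $k$-algebra $A$. Every object of $\NChow(k)_F$ is, up to pseudo-abelian closure and $F$-linear extension of Hom-groups, a direct factor of some $\cU(A)$ with $A$ a saturated dg $k$-algebra (see \S\ref{sub:numerical}), and the functor $-\otimes_k k'$ of Proposition~\ref{prop:induced} sends $\cU(A)$ to $\cU(A \otimes_k k')$ by construction. Since all four functors appearing in either square are $F$-linear, additive, and compatible with the pseudo-abelian envelope, it suffices to exhibit the desired natural isomorphism on generators of this form.

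The base-change isomorphism itself will be obtained from standard properties of Hochschild and cyclic homology: flatness of the field extension $k'/k$ yields $HH_\ast(A \otimes_k k') \simeq HH_\ast(A) \otimes_k k'$ at the level of mixed complexes, and the saturation of $A$ ensures that each $HH_n(A)$ is finite-dimensional over $k$, so that the product-totalization defining $HP_\ast$ coincides with the coproduct-totalization and therefore commutes with $-\otimes_k k'$. This is the step that carries the technical content of the proposition.

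Granting this, the two diagrams unwind directly. In the left square we are in the case $k/F$, so $K=k$ for the source and $K'=k'$ for the target (as $F \subseteq k \subseteq k'$); both compositions send $\cU(A)$ to $HP_\ast(A) \otimes_k k'$ viewed as a super $k'$-vector space, and the base-change isomorphism supplies the natural identification. In the right square we are in the case $F/k'$, hence also $F/k$, so both copies of $\overline{HP_\ast}$ target $\mathrm{sVect}(F)$; the top-right composite sends $\cU(A)$ to $HP_\ast(A \otimes_k k') \otimes_{k'} F$ while the diagonal sends it to $HP_\ast(A) \otimes_k F$, and applying the base-change isomorphism followed by the associativity identification $(HP_\ast(A) \otimes_k k') \otimes_{k'} F \simeq HP_\ast(A) \otimes_k F$ concludes. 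The main obstacle is justifying the first step in the saturated dg setting rather than for ordinary associative algebras, but this should follow from the standard treatment of $HP$ for dg categories (after Keller and Kassel) combined with the finite-dimensionality guaranteed by saturation.
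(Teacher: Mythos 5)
Your approach is fundamentally the same as the paper's: establish a natural base-change isomorphism for periodic cyclic homology of saturated dg categories, use saturation to make the limiting process defining $HP_\ast$ commute with $-\otimes_k k'$, and then unwind the two cases $F\subseteq k$ and $k'\subseteq F$ exactly as you do. The difference is in how the finiteness step is implemented. The paper works with the cyclic bicomplex $CC(\cA)$ (so the base-change isomorphism is $CC(\cA)\otimes_k k' \simeq CC(\cA\otimes_k k')$), passes to the cyclic homology groups $HC_n$, and observes that for saturated $\cA$ the Connes periodicity operator $S$ becomes an isomorphism for $|r|\gg 0$, so the inverse limit $\lim_r HC_{n+2r}(\cA)$ computing $HP_n(\cA)$ is eventually constant and hence commutes with $-\otimes_k k'$. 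Your version — arguing that the product-totalization of the mixed complex agrees with the coproduct-totalization — requires \emph{boundedness} of the Hochschild complex (or of $HH_\ast(A)$, so that in each $\bbZ/2$-degree the product is finite), not merely finite-dimensionality of each $HH_n(A)$ as you state; the latter alone would still leave an infinite product in each degree. Saturation does provide the needed boundedness (smoothness plus properness bound $HH_\ast$), so the gap is repairable, but the paper's formulation via the stabilizing $S$-operator avoids the issue entirely and is cleaner at this point.
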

\begin{proof}
Given a dg category $\cA$, we have a natural isomorphism $CC(\cA)\otimes_kk' \simeq CC(\cA\otimes_kk')$ of $k'$-linear cyclic bicomplexes; see \cite[\S2.1.2]{Loday}. As a consequence, one obtains natural isomorphisms
$$ HC_n(\cA) \otimes_kk' \stackrel{\sim}{\too} HC_n(\cA\otimes_kk') \qquad n \in \bbZ\,,$$
where $HC_n$ denotes the $n^{\mathrm{th}}$ cyclic homology group. These isomorphisms are compatible with Connes' periodicity operator $S$ (see \cite[\S2.2]{Loday}) and hence give rise to a well-defined commutative diagram
$$
\xymatrix@C=2em@R=2em{
\cdots \ar[r]^-{S\otimes_kk'} & HC_{n+2r}(\cA)\otimes_kk' \ar[d]^\simeq  \ar[r]^-{S\otimes_kk'} & HC_{n+2r-2}(\cA) \otimes_kk' \ar[d]^-\simeq \ar[r]^-{S\otimes_kk'} &\cdots  \ar[r]^-{S\otimes_kk'}  \ar[r]^-{S\otimes_kk'}  & HC_n(\cA) \otimes_kk' \ar[d]^-\simeq \\
\cdots \ar[r]_-{S} & HC_{n+2r}(\cA\otimes_kk') \ar[r]_-{S} & HC_{n+2r-2}(\cA \otimes_kk') \ar[r]_-{S}  \ar[r]_-{S} & \cdots \ar[r]_-{S}& HC_n(\cA \otimes_kk')\,.
}
$$
By passing to the limit we obtain then induced isomorphisms
\begin{equation}\label{eq:isom1}
\underset{r}{\mathrm{lim}}(HC_{n+2r}(\cA) \otimes_kk') \stackrel{\sim}{\too} \underset{r}{\mathrm{lim}}\, HC_{n+2r}(\cA\otimes_kk') \qquad n \in \bbZ\,.
\end{equation}
When $\cA$ is saturated, the periodicity operator $S$ becomes an isomorphism for $|r| \gg 0$ and the $n^{\mathrm{th}}$ periodic cyclic homology group $HP_n(\cA)$ identifies with $\mathrm{lim}_rHC_{n+2r}(\cA)$; see the proof of \cite[Prop.~7.9]{Galois}. As a consequence, \eqref{eq:isom1} corresponds to an isomorphism $HP_n(\cA) \otimes_kk' \stackrel{\sim}{\to} HP_n(\cA\otimes_kk')$. This allows us to conclude that the diagram on the left hand-side of \eqref{eq:squares} is commutative. The commutativity of the diagram on the right hand-side is proved similarly: simply use moreover the commutative diagram
$$
\xymatrix{
\mathrm{sVect}(k) \ar[dr]_-{-\otimes_kF} \ar[r]^-{-\otimes_kk'} & \mathrm{sVect}(k') \ar[d]^-{-\otimes_{k'}F} \\
& \mathrm{sVect}(F)\,.
}
$$
\end{proof}
Assuming the noncommutative standard conjecture $C_{NC}$, that $k$ (and hence $k'$) is of characteristic zero, and that $k$ is a field extension of $F$ or that $F$ is a field extension of $k'$, recall from the proof of Theorem~\ref{thm:main3} the construction of the symmetric monoidal categories $\NNum^\dagger(k)_F$ and $\NNum^\dagger(k')_F$. They are obtained by applying the general \cite[Prop.~9.7]{Galois} to the $F$-linear $\otimes$-functors
\begin{eqnarray*}
\overline{HP_\ast}: \NChow(k)_F \too \mathrm{sVect}(F) && \NChow(k)_F \to \NNum(k)_F \\
\overline{HP_\ast}: \NChow(k')_F \too \mathrm{sVect}(F) && \NChow(k')_F \to \NNum(k')_F \,.
\end{eqnarray*}
Thanks to Proposition~\ref{prop:HP}, the functor $-\otimes_kk':\NChow(k)_F \to \NChow(k')_F$ preserves these new symmetry isomorphism constraints and hence gives rise to a well-defined $F$-linear $\otimes$-functor $-\otimes_kk':\NNum^\dagger(k)_F \to \NNum^\dagger(k')_F$. This concludes the proof of Theorem~\ref{thm:basechange1}.
\subsection{Proof of Theorem~\ref{thm:basechange2}}\label{sub:basechange2}
The field extension $k'/k$ gives rise to an adjunction
\begin{equation}\label{eq:adj}
\xymatrix{
\dgcat(k') \ar@<1ex>[d]^-{(-)_k} \\
\dgcat(k) \ar@<1ex>[u]^{-\otimes_kk'}\,.
}
\end{equation}
\begin{lemma}\label{lemma:key2}
The functor $(-)_k: \dgcat(k') \to \dgcat(k)$ preserves derived Morita equivalences and we have a natural identification $T(\cA,\cB;X)_k \simeq T(\cA_k,\cB_k;X_k)$ of dg $k$-categories for all dg $k'$-categories $\cA$ and $\cB$ and $\cA\text{-}\cB$-bimodules~$X$. 
\end{lemma}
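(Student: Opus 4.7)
The plan is to handle the two claims separately, starting with the identification of $T$-categories, which is essentially tautological, and then focusing on the Morita-preservation claim.

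For the identification $T(\cA,\cB;X)_k \simeq T(\cA_k,\cB_k;X_k)$, I would observe that it follows by direct inspection. Both dg $k$-categories have as object set the disjoint union of the objects of $\cA$ and $\cB$, and their morphism complexes—namely $\cA(x,y)$, $\cB(x,y)$, $X(x,y)$, or $0$ depending on the positions of $x$ and $y$—are the \emph{same} underlying complexes, now viewed as complexes of $k$-vector spaces via restriction along $k\hookrightarrow k'$. The composition law is built from the composition on $\cA$, the composition on $\cB$, and the $\cA$-$\cB$-bimodule structure on $X$, none of which is altered by restriction of scalars.

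For the preservation of derived Morita equivalences I would first check the easier property that $(-)_k$ preserves \emph{quasi-equivalences}. Given $G:\cA\to\cB$ in $\dgcat(k')$: condition~(i) of Definition~\ref{def:quasi-eq} is preserved since a quasi-isomorphism of complexes of $k'$-vector spaces is, by exactness of restriction, a quasi-isomorphism of complexes of $k$-vector spaces; condition~(ii) is preserved because $\dgHo(\cA_k)$ has literally the same objects and morphisms as $\dgHo(\cA)$, so essential surjectivity of $\dgHo(G)$ transfers verbatim to $\dgHo(G_k)$.

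Now, given a derived Morita equivalence $G:\cA\to\cB$ in $\dgcat(k')$, I would verify the two conditions of Proposition~\ref{prop:char}(iii) for $G_k$. Condition~(i) was just handled. For the essential-image condition, the restriction of scalars along $k\hookrightarrow k'$ at the level of dg modules sends a representable $\cA$-module $\cA(-,x)$ to the representable $\cA_k$-module $\cA_k(-,x)$; since restriction is additive and preserves finite (homotopy) colimits as well as retracts, and since compact objects in $\cD(\cA)$ (resp.\ $\cD(\cA_k)$) are generated from representables by these operations, one obtains a well-defined triangulated functor $\cD_c(\cA)\to\cD_c(\cA_k)$, $X\mapsto X_k$. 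The crucial point is the natural isomorphism
\[
\bigl(\bbL G_!\,X\bigr)_k \;\simeq\; \bbL(G_k)_!\bigl(X_k\bigr)\qquad X\in\cD_c(\cA),
\]
which holds because extension of scalars along $G$ is computed by the derived bimodule tensor product $-\otimes^{\bbL}_{\cA}\cB$, a coend whose construction depends only on the $\cA$-action and is therefore unaffected by viewing everything over $k$ instead of over $k'$. Given $y\in\dgHo(\cB)=\dgHo(\cB_k)\subset\cD_c(\cB_k)$, by hypothesis there exists $x\in\cD_c(\cA)$ with $\bbL G_!(x)\simeq y$ in $\cD_c(\cB)$; restricting yields $\bbL(G_k)_!(x_k)\simeq y$ in $\cD_c(\cB_k)$, which verifies condition~(iii) of Proposition~\ref{prop:char} for $G_k$ and completes the proof.

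The main technical obstacle is the compatibility isomorphism $(\bbL G_!\,X)_k \simeq \bbL(G_k)_!(X_k)$; while intuitively clear, it requires checking that a cofibrant replacement of $X$ over $\cA$ remains cofibrant, or at least tensor-adequate, after restriction, so that the derived tensor products computed over $k'$ and over $k$ agree up to canonical quasi-isomorphism.
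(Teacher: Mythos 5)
Your handling of the $T$-category identification is identical to the paper's: both rely on $(-)_k$ not altering the object set and on $X_k$ being naturally an $\cA_k\text{-}\cB_k$-bimodule. For Morita preservation, however, your route is genuinely different from, and more elaborate than, the paper's. The paper simply invokes the \emph{natural} equivalence $\cD(\cA)\simeq\cD(\cA_k)$, $\cA\in\dgcat(k')$; naturality in $\cA$ means the restriction equivalences intertwine $\bbL G_!$ with $\bbL(G_k)_!$, so one is an equivalence if and only if the other is, and the lemma follows in one line. You instead unpack this into a verification of Proposition~\ref{prop:char}(iii) for $G_k$, which forces you to establish the commutation $(\bbL G_!\,X)_k \simeq \bbL(G_k)_!(X_k)$. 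This is not a genuine obstacle, even though you flag it as one: the functor $(-)_k:\cC(\cA)\to\cC(\cA_k)$ is an isomorphism of categories (a $\cA_k$-module automatically carries a $k'$-structure via the central action of $k'\subset\cA(a,a)$) preserving weak equivalences, fibrations and hence cofibrations, so cofibrant replacements correspond, and the underived tensor products already agree because the coequalizer over $\cA_k$ identifies $x\lambda\otimes_k g$ with $x\otimes_k\lambda g$ for $\lambda\in k'$, collapsing $\otimes_k$ to $\otimes_{k'}$. In short, the compatibility you postpone is precisely the content of the paper's word ``natural,'' and once spelled out your argument closes. What your approach buys is that it avoids asserting $\cD(\cA)\simeq\cD(\cA_k)$ is an equivalence (you only need a functor on compacts plus the compatibility), but the work involved is essentially the same; the paper's phrasing is more economical.
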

\begin{proof}
The first claim follows from the fact that for every $\cA \in \dgcat(k')$ we have a natural equivalence $\cD(\cA) \simeq \cD(\cA_k)$ of $k$-categories. The second claim follows from the fact that the functor $(-)_k$ does not alter the set of objects and that $X_k$ is naturally a $\cA_k\text{-}\cB_k$-bimodule.
\end{proof}
Lemma~\ref{lemma:key2} combined with the universal property of \eqref{eq:universal} (see Theorem~\ref{thm:universal}) furnish us a well-defined additive functor $(-)_k:\Hmo_0(k') \to \Hmo_0(k)$, which is moreover symmetric monoidal. Let $\cA \in \Hmo_0(k)$ and $\cB \in \Hmo_0(k')$. The natural equivalence of categories $\cD((\cA\otimes_kk')^\op \otimes_{k'}\cB) \simeq \cD(\cA^\op \otimes_k \cB_k)$ restricts to an equivalence $\rep(\cA\otimes_kk', \cB) \simeq \rep(\cA,\cB_k)$ and hence gives rise to an isomorphism
$$ \Hom_{\Hmo_0(k')}(\cA\otimes_kk', \cB)=K_0\rep(\cA\otimes_kk',\cB) \simeq K_0\rep(\cA,\cB_k) = \Hom_{\Hmo_0(k)}(\cA,\cB_k)\,.$$
As a consequence, one obtains the following adjunction
\begin{equation}\label{eq:adju}
\xymatrix{
\Hmo_0(k') \ar@<1ex>[d]^-{(-)_k} \\
\Hmo_0(k) \ar@<1ex>[u]^{-\otimes_kk'}\,.
}
\end{equation}
\begin{lemma}\label{lemma:key}
Given dg $k'$-algebras $A$ and $B$, we have a direct sum decomposition
\begin{equation}\label{eq:decomposition}
A_k^\op \otimes_k B_k \simeq (A^\op \otimes_{k'}B)_k\oplus D
\end{equation}
of dg $k$-algebras.
\end{lemma}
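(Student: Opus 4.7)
The plan is to exploit the hypothesis that $k'/k$ is finite and separable, which makes $k'\otimes_k k'$ an \'etale $k$-algebra. Concretely, the multiplication map $\mu\colon k'\otimes_k k'\twoheadrightarrow k'$ admits a section as $k$-algebras; equivalently, there exists a central idempotent $e\in k'\otimes_k k'$ such that multiplication restricts to an isomorphism $e\cdot(k'\otimes_k k')\isoto k'$. This produces a direct product decomposition
$$k'\otimes_k k' \;\simeq\; k'\times L, \qquad L:=(1-e)(k'\otimes_k k'),$$
of commutative $k$-algebras.

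Next, the two $k'$-structures on $A^\op$ and on $B$ endow $A_k^\op\otimes_k B_k$ with the structure of a dg $(k'\otimes_k k')$-algebra: they are central dg $k$-algebra maps $k'\to Z(A_k^\op)$ and $k'\to Z(B_k)$, and together they determine a central dg $k$-algebra map $k'\otimes_k k'\to Z(A_k^\op\otimes_k B_k)$. Pushing $e$ forward along this central map yields a degree-zero central idempotent $E\in A_k^\op\otimes_k B_k$, whence the splitting
$$A_k^\op\otimes_k B_k \;\simeq\; E\cdot(A_k^\op\otimes_k B_k)\;\oplus\;(1-E)\cdot(A_k^\op\otimes_k B_k)$$
of dg $k$-algebras, which is the shape of the desired decomposition.

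The main identification to carry out is that the $E$-summand is canonically isomorphic, as a dg $k$-algebra, to $(A^\op\otimes_{k'}B)_k$. The conceptual point is that $A^\op\otimes_{k'}B$ is precisely the coequaliser of the two $k'$-actions on $A^\op\otimes_k B$, so it can be written as $(A^\op\otimes_k B)\otimes_{k'\otimes_k k'}k'$; under the product decomposition $k'\otimes_k k'\simeq k'\times L$ this coequaliser is exactly the $E$-component, while the $(1-E)$-component corresponds to $(A^\op\otimes_k B)\otimes_{k'\otimes_k k'}L$, which we take as the definition of $D$.

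The only genuine thing to check is that $E$ indeed gives a splitting of dg \emph{algebras} rather than merely of dg $k$-modules, but this is automatic since $E$ is central and sits in cohomological degree zero (so its differential vanishes and the two summands are closed under multiplication). Thus I expect the proof to be essentially formal once the separability of $k'/k$ has been used to produce the idempotent $e$; the real content is packaged in the étaleness of $k'\otimes_k k'$ and the rest is bookkeeping.
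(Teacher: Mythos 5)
Your proposal is correct and matches the paper's argument: separability produces a central idempotent $e\in k'\otimes_k k'$ (the paper invokes projectivity of $k'$ as a $k'\otimes_k k'$-module and sets $e:=s(1)$ for a module section $s$ of the multiplication map), which is then pushed forward to a central degree-zero idempotent in $A_k^\op\otimes_k B_k$ and used to split off the summand $(A^\op\otimes_{k'}B)_k$, exactly as you do. One terminological slip worth fixing: the multiplication $\mu\colon k'\otimes_k k'\to k'$ does not admit a section \emph{as $k$-algebras} in the unital sense (a unital algebra section would have $s(1)=1$, not the nontrivial idempotent you need, and in any case $a\mapsto a\otimes 1$ is always a unital $k$-algebra section with no separability required); what separability actually provides is a section of $\mu$ as $k'\otimes_k k'$-modules, and it is $e=s(1)$ for \emph{that} section which yields the central idempotent and the product decomposition $k'\otimes_k k'\simeq k'\times L$ you go on to use.
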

\begin{proof}
One of the many equivalent characterizations of a finite {\em separable} field extension $k'/k$ asserts that $k'$ is projective as a $k'\otimes_kk'$-module; see \cite[Thm.~9.2.11]{Weibel}. Hence, the multiplication map $k'\otimes_k k' \to k'$ admits a section $s$ and so we obtain a central idempotent element $e:=s(1)$ of $k'\otimes_kk'$. Given dg $k'$-algebras $A$ and $B$ we have a canonical surjective map
\begin{equation}\label{eq:homo-mult}
A_k^\op \otimes_k B_k \twoheadrightarrow (A^\op \otimes_{k'}B)_k\,.
\end{equation} 
The dg $k$-algebra $A^\op_k \otimes_k B_k$ is naturally a dg $(k'\otimes_kk')$-algebra and so is endowed with a central idempotent $\tilde{e}:=1 \cdot e$. This allows us to define a section to \eqref{eq:homo-mult} by the rule $b_1\otimes b_2 \mapsto (b_1\otimes b_2) \cdot \tilde{e}$, and hence obtain the direct sum decomposition \eqref{eq:decomposition}.
\end{proof}
\begin{proposition}\label{prop:restriction}
The functor $(-)_k:\dgcat(k') \to \dgcat(k)$ preserves saturated dg categories.
\end{proposition}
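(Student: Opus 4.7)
The strategy is to verify Kontsevich's two defining conditions directly, after reducing to saturated dg algebras. Since every saturated dg $k'$-category is derived Morita equivalent to a saturated dg $k'$-algebra, and Lemma~\ref{lemma:key2} guarantees that $(-)_k$ preserves derived Morita equivalences (while saturation, being the characterization of rigid objects in $\Hmo$, is itself a derived Morita invariant), it suffices to show that for every saturated dg $k'$-algebra $A$ the dg $k$-algebra $A_k$ is again saturated.

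The cohomological finiteness condition is immediate: combining the assumption $\sum_i \dim_{k'}\mathrm{H}^i A < \infty$ with the finiteness of $[k':k]$ gives
$$\sum_i \dim_k \mathrm{H}^i A_k = [k':k]\sum_i \dim_{k'}\mathrm{H}^i A < \infty.$$
The real work is the compactness of the diagonal bimodule $A_k$ in $\cD_c(A_k^\op \otimes_k A_k)$. Here the key tool is Lemma~\ref{lemma:key}, which supplies a direct sum decomposition $A_k^\op \otimes_k A_k \simeq (A^\op \otimes_{k'} A)_k \oplus D$ of dg $k$-algebras via the central idempotent $\tilde{e} = 1\cdot e$. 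A direct computation, using the centrality of $k'$ inside $A$ together with the fact that $e$ arises from a section of the multiplication $k' \otimes_k k' \to k'$, shows that $\tilde{e}$ acts as the identity on the diagonal bimodule $A_k$. Hence under the induced splitting $\cD(A_k^\op \otimes_k A_k) \simeq \cD((A^\op \otimes_{k'} A)_k) \times \cD(D)$, the module $A_k$ corresponds to a pair $(A_k,0)$, and its compactness on the left reduces to its compactness in $\cD((A^\op \otimes_{k'} A)_k)$.

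One then concludes by transporting this remaining compactness through the natural equivalence $\cD((A^\op \otimes_{k'} A)_k) \simeq \cD(A^\op \otimes_{k'} A)$ already invoked in the proof of Lemma~\ref{lemma:key2}: it preserves compact objects and identifies the diagonal $(A^\op \otimes_{k'} A)_k$-module $A_k$ with the diagonal $A$-$A$-bimodule over $k'$, whose compactness is precisely the saturation hypothesis on $A$. The principal technical point, and the step where the separability of $k'/k$ is crucially used, is verifying that the module structure on the diagonal bimodule factors through the projection $A_k^\op \otimes_k A_k \twoheadrightarrow (A^\op \otimes_{k'} A)_k$ (equivalently, that $1-\tilde{e}$ acts trivially on $A_k$); once this algebraic point is secured, the remainder of the argument is a formal manipulation of compact objects in triangulated categories.
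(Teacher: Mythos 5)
Your proof is correct and matches the paper's argument in all essentials: the reduction to saturated dg algebras via Lemma~\ref{lemma:key2}, the trivial cohomological finiteness check, and the use of Lemma~\ref{lemma:key} to reduce compactness of the diagonal $A_k$-bimodule to compactness of the diagonal $A$-bimodule. The paper packages the last step as ``restriction along $A_k^\op\otimes_k A_k \twoheadrightarrow (A^\op\otimes_{k'}A)_k$ preserves compacts because $(A^\op\otimes_{k'}A)_k$ is a direct summand,'' while you invoke the equivalent product decomposition $\cD(A_k^\op\otimes_k A_k)\simeq \cD((A^\op\otimes_{k'}A)_k)\times\cD(D)$ and spell out that $\tilde e$ acts as the identity on $A_k$ (a point the paper leaves implicit in the assertion that restriction sends $A$ to $A_k$); these are two phrasings of the same argument.
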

\begin{proof}
Since by Lemma~\ref{lemma:key2} the functor $(-)_k$ preserves derived Morita equivalences, it suffices to consider saturated dg algebras; see \S\ref{sub:saturated}. Let $A$ be a saturated dg $k'$-algebra. On one hand, since by hypothesis the field extension $k'/k$ is finite, the sum $\sum_i \mathrm{dim}_k\textrm{H}^i(A_k)=[k':k]\cdot \sum_i \mathrm{dim}_{k'} \, \textrm{H}^i(A)$ remains finite. On the other hand, we have a well-defined triangulated functor
\begin{equation}\label{eq:functor-step}
\cD_c(A^\op \otimes_{k'} A) \simeq \cD_c((A^\op\otimes_{k'}A)_k) \too \cD(A^\op_k \otimes_k A_k)
\end{equation}
given by restriction along the canonical map $A^\op_k \otimes_k A_k \twoheadrightarrow (A^\op\otimes_{k'}A)_k$. By Lemma~\ref{lemma:key} (applied to $B=A$), $(A^\op \otimes_k A)_k$ is a direct factor of $A^\op\otimes_kA_k$ and hence a compact object. As a consequence, the above functor \eqref{eq:functor-step} take values in the subcategory $\cD_c(A^\op_k \otimes_k A_k)$ of compact objects. Moreover, it sends the $A\text{-}A$-bimodule $A$ to the $A_k\text{-}A_k$-bimodule $A_k$. Since by hypothesis $A$ belongs to $\cD_c(A^\op\otimes_{k'}A)$ we then conclude that $A_k$ belongs to $\cD_c(A^\op_k \otimes_kA_k)$. This shows that $A_k$ is a saturated dg $k$-algebra and so the proof is finished. 
\end{proof}
Proposition~\ref{prop:restriction} (and Proposition \ref{prop:induced}) implies that the above adjunction \eqref{eq:adju} restricts to saturated dg categories. By combining this fact with the description of the category of noncommutative Chow motives given in the proof of Proposition~\ref{prop:induced}, one obtains then the adjunction 
\begin{equation}\label{eq:adjun}
\xymatrix{
\NChow(k')_F \ar@<1ex>[d]^-{(-)_k} \\
\NChow(k)_F \ar@<1ex>[u]^{-\otimes_kk'}\,.
}
\end{equation}
Now, in order to obtain the adjunction on the right hand-side of \eqref{eq:adjunctions} it suffices then to show that both functors of \eqref{eq:adjun} descend to the category of noncommutative numerical motives. The case of the functor $-\otimes_kk'$ is contained in Theorem~\ref{thm:basechange1}. In what concerns $(-)_k$ we have the following result:
\begin{proposition}
We have a well-defined functor $(-)_k: \NNum(k')_F \to \NNum(k)_F$.
\end{proposition}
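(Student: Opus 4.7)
The plan is to descend the additive symmetric monoidal functor $(-)_k : \sHmo_0(k') \to \sHmo_0(k)$ (obtained from the universal property of Theorem~\ref{thm:universal} together with Lemma~\ref{lemma:key2} and Proposition~\ref{prop:restriction}) to the level of noncommutative numerical motives. By the description of $\NNum(-)_F$ recalled in \S\ref{sub:numerical}, it suffices to verify that $(-)_k$ sends the $\otimes$-ideal $\mathrm{Ker}(\chi_{k'})$ into $\mathrm{Ker}(\chi_k)$; the subsequent $F$-linearization and pseudo-abelianization are then formal and automatically transport the symmetric monoidal structure. Concretely, given saturated dg $k'$-algebras $A, B$ and a class $[X] \in K_0(A^\op \otimes_{k'} B)$ lying in $\mathrm{Ker}(\chi_{k'})$, the goal is to prove that $\chi_k([X_k],[Y])=0$ for every $Y \in \cD_c(A_k^\op \otimes_k B_k)$.

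The key technical input will be Lemma~\ref{lemma:key}, which provides a direct sum decomposition $A_k^\op \otimes_k B_k \simeq (A^\op \otimes_{k'} B)_k \oplus D$ of dg $k$-algebras, realized by a central idempotent $\tilde e$ coming from the separability of $k'/k$. The first step is to use $\tilde e$ to split every $Y \in \cD_c(A_k^\op \otimes_k B_k)$ as $Y_1 \oplus Y_2$, with $Y_1 := \tilde e \cdot Y$ supported on $(A^\op \otimes_{k'} B)_k$ and $Y_2 := (1-\tilde e)\cdot Y$ supported on $D$. Since $X_k$ is obtained by restriction along the canonical surjection $A_k^\op \otimes_k B_k \twoheadrightarrow (A^\op \otimes_{k'} B)_k$, it satisfies $\tilde e \cdot X_k = X_k$, so $\Hom(X_k, Y_2[n])=0$ for every $n \in \bbZ$ and $\chi_k([X_k],[Y_2])=0$ holds for free.

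The second step handles the contribution of $Y_1$ via the tautological equivalence $\cD_c((A^\op \otimes_{k'} B)_k) \simeq \cD_c(A^\op \otimes_{k'} B)$, which sends $Y_1$ to some $Y_1'$: the underlying $\Hom$ groups agree as abelian groups, and their dimensions over $k$ and $k'$ differ precisely by the factor $[k':k]$. This yields the identity
\[
\chi_k([X_k],[Y_1]) \;=\; [k':k] \cdot \chi_{k'}([X],[Y_1']),
\]
which vanishes by the assumption $[X] \in \mathrm{Ker}(\chi_{k'})$. Combined with the first step this gives $\chi_k([X_k],[Y])=0$, as required. The main point requiring some care will be the bookkeeping that ensures $X_k$ really lives in the $\tilde e$-component of $A_k^\op \otimes_k B_k$ and that the bilinear form transforms as claimed; both are controlled by Lemma~\ref{lemma:key} together with the elementary formula $\dim_k = [k':k] \cdot \dim_{k'}$, and so should not present any real difficulty.
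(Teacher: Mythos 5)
Your proof is correct and follows essentially the same route as the paper. Both arguments reduce to showing that $(-)_k:\sHmo_0(k')\to\sHmo_0(k)$ preserves $\mathrm{Ker}(\chi)$, and both take as their key input the separability idempotent from Lemma~\ref{lemma:key} giving the direct sum decomposition $A_k^\op\otimes_k B_k \simeq (A^\op\otimes_{k'}B)_k\oplus D$. The only difference is presentational: the paper packages the dimension bookkeeping via the adjunction $L\dashv(-)_k$ on compact derived categories (where $L$ is extension along the surjection $A_k^\op\otimes_k B_k\twoheadrightarrow(A^\op\otimes_{k'}B)_k$ followed by the tautological equivalence) and invokes a formula analogous to Lemma~\ref{lem:compact}, whereas you unfold that adjunction explicitly by splitting $Y=Y_1\oplus Y_2$ with the idempotent $\tilde e$, killing the $D$-component and handling the remaining piece by the identity $\dim_k=[k':k]\cdot\dim_{k'}$. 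Since $L(Y)$ is precisely your $Y_1'$, the two computations are the same; your version is just slightly more hands-on.
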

\begin{proof}
By combining Lemma~\ref{lemma:key2} with Proposition~\ref{prop:restriction} we obtain a well-defined functor $(-)_k:\sHmo_0(k') \to \sHmo_0(k)$. Hence, as explained in the proof of Theorem~\ref{thm:basechange1}, it suffices to show that it preserves the $\otimes$-ideal $\mathrm{Ker}(\chi)$, \ie given any two saturated dg $k'$-algebras $A$ and $B$ one needs to prove that the homomorphism
$$ \Hom_{\sHmo_0(k')}(A,B)=K_0(A^\op\otimes_{k'} B) \to K_0(A^\op_k\otimes_kB_k)= \Hom_{\sHmo_0(k)}(A_k, B_k)$$
preserves the kernel of the bilinear form $\chi$. Since $K_0(A^\op \otimes_{k'}B)$ is generated by the elements of shape $[X]$, with $X \in \cD_c(A^\op \otimes_{k'}B)$, and similarly $K_0(A_k^\op \otimes_k B_k)$ is generated by the elements $[Y]$, with $Y \in \cD_c(A_k^\op \otimes_kB_k)$, it suffices to prove the following implication:
\begin{equation}\label{eq:implication1}
\textrm{
$[X] \in \mathrm{Ker}(\chi) \Rightarrow \chi([Y],[X_k])=0 \,\, \textrm{for all} \,\,Y \in \cD_c(A_k^\op\otimes_kB_k)$.
}
\end{equation}
We have a natural adjunction
\begin{equation}\label{eq:adj-new}
\xymatrix{
\cD_c(A^\op\otimes_{k'}B) \ar@<1ex>[d]^-{(-)_k} \\
\cD_c(A_k^\op \otimes_k B_k)   \ar@<1ex>[u]^-{L}\,,
}
\end{equation}
where $L$ denotes the extension along the canonical map $A^\op_k \otimes_k B_k \twoheadrightarrow (A^\op \otimes_{k'}B)_k$ followed by the natural equivalence $\cD_c((A^\op \otimes_{k'}B)_k) \simeq \cD_c(A^\op\otimes_{k'}B)$. An argument similar to the one used in the proof of Lemma~\ref{lem:compact} shows us that $\chi([L(Y)],[X])=[k':k]\cdot \chi([Y],[X_k])$. Since $[k':k]\neq 0$, one concludes then that the above implication \eqref{eq:implication1} holds and so the proof is finished.
\end{proof}
\subsection{Compatibility}\label{sub:compatibility}
Given a field extension $k'/k$, recall from \cite[\S4.2.3]{Andre} the construction of the $F$-linear $\otimes$-functors
\begin{eqnarray}
\Chow(k)_F \too \Chow(k')_F && Z \mapsto Z_{k'} \label{eq:base-change-1}\\
\Num(k)_F \too \Num(k')_F && Z \mapsto Z_{k'} \label{eq:base-change-2}
\end{eqnarray}
between the categories of Chow and numerical motives. The compatibility of these base-change functors with those of Theorem~\ref{thm:basechange1} is the following:
\begin{theorem}\label{thm:compatibility}
When the field extension $k'/k$ is finite we have the following commutative diagrams
$$
\xymatrix{
\Chow(k')_F \ar[rr]^-{\Phi:=R\circ \pi} && \NChow(k')_F & \Num(k')_F \ar[rr]^-{\Phi_\cN:=R_\cN \circ \pi} && \NNum(k')_F \\
\Chow(k)_F \ar[rr]_-{\Phi:=R\circ \pi} \ar[u]^-{\eqref{eq:base-change-1}} && \NChow(k)_F \ar[u]_-{-\otimes_k k'} & \Num(k)_F  \ar[u]^-{\eqref{eq:base-change-2}} \ar[rr]_-{\Phi_\cN:= R_\cN \circ \pi} & & \NNum(k)_F \ar[u]_-{-\otimes_kk'}\,,
}
$$
where $\pi$, $R$, and $R_\cN$ are as in \eqref{eq:diagram-notation}. Assuming the standard sign conjecture $C^+$ (see \cite[\S5.1.3]{Andre}) as well as its noncommutative analogue $C_{NC}$, that $k$ is of characteristic zero, and that $k$ is a field extension of $F$ or that $F$ is a field extension of $k'$, we have moreover the following commutative diagram
\begin{equation}\label{eq:diag-com-dagger}
\xymatrix{\Num^\dagger(k')_F \ar[r]^-{\Phi_\cN} & \NNum^\dagger(k')_F \\
\Num^\dagger(k)_F \ar[u]^-{\eqref{eq:base-change-2}} \ar[r]_-{\Phi_\cN} & \NNum^\dagger(k)_F \ar[u]_-{-\otimes_k k'}\,.
}
\end{equation}
\end{theorem}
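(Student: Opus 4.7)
The plan is to reduce the entire statement to the compatibility
\[
\cU(\cD_\perf^\dg(Z)) \otimes_k k' \;\simeq\; \cU(\cD_\perf^\dg(Z_{k'})) \qquad \text{in } \NChow(k')_F,
\]
valid for every smooth projective $k$-scheme $Z$, and then to chase diagrams using the description $\Phi = R\circ \pi$ through the orbit category together with the machinery already built in Sections~\ref{sub:basechange1}--\ref{sub:basechange2}. First I would establish the displayed isomorphism: since $\cD_\perf^\dg(Z)$ is derived Morita equivalent to the dg endomorphism algebra of a classical generator of $\cD_\perf(Z)$ (e.g.\ $\cO_Z$ together with shifts of an ample line bundle), flat base change yields a quasi-isomorphism $\mathrm{R}\End_{\cD_\perf(Z)}(G)\otimes_k k'\simeq \mathrm{R}\End_{\cD_\perf(Z_{k'})}(G_{k'})$. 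By Proposition~\ref{prop:Morita} the functor $-\otimes_k k':\dgcat(k)\to\dgcat(k')$ preserves derived Morita equivalences, so these two saturated dg algebras become equal in $\sHmo_0(k')$, and hence in $\NChow(k')_F$.

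Next I would upgrade this from objects to morphisms, proving commutativity of the first (Chow) square. Any morphism in $\Chow(k)_F$ factors through correspondences $\alpha \in \cZ^\ast_\rat(Z\times W)_F$ between generators $M(Z)$, $M(W)$; by the construction of $\Phi$ through the orbit category and the main theorem of \cite{CvsNC}, $\Phi(\alpha)$ is represented by the class in $K_0(\cD_\perf^\dg(Z)^\op\otimes_k \cD_\perf^\dg(W))_F$ of the bimodule $\alpha_\ast$ attached to $\alpha$. The formation of this bimodule commutes with base change because the pullback cycle $\alpha_{k'}$ on $Z_{k'}\times W_{k'}$ produces, up to the Morita equivalence of Step~1, precisely the bimodule $\alpha_\ast \otimes_k k'$. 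Since $\Phi$, the orbit projection $\pi$, and $R$ are all $F$-linear $\otimes$-functors, this verifies the first square on all morphisms.

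I would then deduce the numerical square. The vertical base-change functor on $\Num(k)_F$ exists classically (see~\cite[\S4.2.3]{Andre}) and on $\NNum(k)_F$ by Theorem~\ref{thm:basechange1}, and both are obtained by descending $-\otimes_k k'$ along the canonical projections from Chow motives (respectively noncommutative Chow motives) to their numerical quotients. Consequently the commutativity of the Chow square descends to the numerical square. For the dagger square, I would invoke Proposition~\ref{prop:HP}: periodic cyclic homology commutes with $-\otimes_k k'$, and similarly the classical flat base change $H^\ast_{dR}(Z)\otimes_k k'\simeq H^\ast_{dR}(Z_{k'})$ holds for de Rham cohomology. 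Under these isomorphisms the even/odd Künneth projectors defining the modifications transform into one another, so both $\Phi_\cN$ and $-\otimes_k k'$ preserve the modified symmetry constraints $c^\dagger$ of \cite[Prop.~9.7]{Galois}, yielding the commutativity of~\eqref{eq:diag-com-dagger}.

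The main obstacle I expect is the very first step: a clean derived Morita equivalence $\cD_\perf^\dg(Z) \otimes_k k' \simeq \cD_\perf^\dg(Z_{k'})$ at the level of the chosen dg enhancements (rather than merely on triangulated categories), ensuring that no ambiguity in the choice of enhancement intervenes. Once a compact generator is picked functorially and flat base change is applied at the level of endomorphism dg algebras, the rest of the argument is largely formal bookkeeping with the universal property of \eqref{eq:universal} and the descriptions of the quotients $\NChow \to \NNum$ and $\NChow \to \NChow^\dagger$ reviewed in Section~\ref{sec:NCmotives}.
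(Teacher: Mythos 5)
Your overall architecture matches the paper: establish the Chow square, descend to the numerical square (using fullness and density of $\Chow(k)_F\to\Num(k)_F$, or equivalently the fact that the base-change functors are descents of $-\otimes_k k'$), and then handle the dagger square via Proposition~\ref{prop:HP} together with the identification of the fiber functor with the $2$-periodization of de Rham cohomology. The first step (dg-level compatibility $\cD_\perf^\dg(Z)\otimes_k k'\simeq\cD_\perf^\dg(Z_{k'})$) is also the paper's starting point, quoted from \cite[Prop.~6.2]{Regularity}, and in fact holds for \emph{arbitrary} field extensions, so it is not the ``main obstacle'' you flag at the end.

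The genuine gap is in your treatment of morphisms in the Chow square. You write that $\Phi(\alpha)$ is ``represented by the class \dots of the bimodule $\alpha_\ast$ attached to $\alpha$'' and that ``the formation of this bimodule commutes with base change because the pullback cycle $\alpha_{k'}$ \dots produces \dots precisely the bimodule $\alpha_\ast\otimes_k k'$.'' This glosses over exactly where the finiteness hypothesis on $k'/k$ is used. The functor $R$ from the orbit category to $\NChow(k)_F$ (see \cite[\S8]{CvsNC}) turns a cycle into a $K_0$-class via the Grothendieck--Riemann--Roch isomorphism $\mathrm{ch}(-)\cdot\pi^\ast_{Z'}(\mathrm{td}(Z'))\colon K_0(Z\times Z')_F\to\cZ^\ast_{\mathrm{rat}}(Z\times Z')_F$. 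To make $\Phi$ commute with base change on morphisms one must therefore check that this GRR isomorphism is compatible with the pullback $(\iota\times\iota')^\ast$ along $Z_{k'}\times Z'_{k'}\to Z\times Z'$. Naturality of $\mathrm{ch}$ handles one piece, but the Todd class requires the identity $(\iota')^\ast(\mathrm{td}(Z'))=\mathrm{td}(Z'_{k'})$, which holds precisely because $k'/k$ is \emph{finite} (see \cite[Example~18.3.9]{Fulton}); this is the content of the paper's Lemma~\ref{lem:aux-compatibility}. Your proposal never invokes the finiteness assumption and never mentions the Todd class, so at the crucial step you have asserted the conclusion rather than proved it.

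Fixing this is straightforward: replace the sentence ``the formation of this bimodule commutes with base change because\dots'' with the verification that $\mathrm{ch}(-)\cdot\pi^\ast_{Z'}(\mathrm{td}(Z'))$ commutes with $(\iota\times\iota')^\ast$, using naturality of $\mathrm{ch}$, the identity $(\iota\times\iota')^\ast\pi^\ast_{Z'}(\mathrm{td}(Z'))=\pi^\ast_{Z_{k'}}(\mathrm{td}(Z'_{k'}))$, and the Fulton computation for finite extensions. Once that is in place, your reductions to the numerical and dagger squares are essentially the same as the paper's and go through.
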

\begin{proof}
Let us start with the case of (noncommutative) Chow motives. Since the above functor \eqref{eq:base-change-1} maps the Tate motive $\bbQ(1)$ to itself, it descends to the orbit categories. Consequently, it suffices to show that the square
\begin{equation}\label{eq:base-change-final}
\xymatrix{
\Chow(k')_F/_{\!\!-\otimes \bbQ(1)} \ar[r]^-{R} & \NChow(k')_F \\
\Chow(k)_F/_{\!\!-\otimes\bbQ(1)} \ar[u]^-{\eqref{eq:base-change-1}} \ar[r]_-R & \NChow(k)_F \ar[u]_-{-\otimes_kk'}
}
\end{equation}
commutes. Recall from \cite[\S8]{CvsNC} the explicit construction of the fully faithful $F$-linear $\otimes$-functor $R$. Since $\cD_\perf^\dg(Z) \otimes_k k' \simeq \cD_\perf^\dg(Z_{k'})$ for any smooth projective $k$-scheme $Z$ (see \cite[Prop.~6.2]{Regularity}), one observes from {\em loc. cit.} that the commutativity of \eqref{eq:base-change-final} follows from Lemma~\ref{lem:aux-compatibility} below.

In what concerns (noncommutative) numerical motives, consider the diagram:
\begin{equation}\label{eq:diagram-big}
\xymatrix@C=.7em@R=3em{
\Chow(k')_F \ar[rrr]^-\Phi \ar[d]  &&& \NChow(k')_F \ar[d]  \\
\Num(k')_F  \ar[rrr]^-{\Phi_\cN} &&& \NNum(k')_F \\
 \Num(k)_F\ar[rrr]_-{\Phi_{\cN}} \ar[u]^-{\eqref{eq:base-change-2}}  &&&\NNum(k)_F \ar[u]_-{-\otimes_kk'} \\
\Chow(k)_F \ar[u] \ar[rrr]_-\Phi  \ar@/^5pc/[uuu]^-{\eqref{eq:base-change-1}}&&&  \NChow(k)_F \ar[u]  \ar@/_5pc/[uuu]_-{-\otimes_kk'}\,.
}
\end{equation}
All the six squares, except the middle one, are commutative. The four smallest ones are commutative by construction and the commutativity of the largest one was proved above. Recall that by construction the functor $\Chow(k)_F \to \Num(k)_F$ is full and that its image is {\em dense}, \ie every object of $\Num(k)_F$ is a direct factor of an object in the image. As a consequence, one concludes that the middle square in \eqref{eq:diagram-big} is also commutative.

Let us show that \eqref{eq:diag-com-dagger} is commutative. Recall from \cite[\S11]{Galois} the construction of the symmetric monoidal categories $\Num^\dagger(k)_F$ and $\Num^\dagger(k')_F$. Similarly to $\NNum^\dagger(k)_F$ and $\NNum^\dagger(k')_F$ (see the proof of Proposition \ref{prop:HP}), they are obtained by applying the general \cite[Prop.~9.7]{Galois} to the $F$-linear $\otimes$-functors
\begin{eqnarray}
& \Chow(k)_F \stackrel{\Phi}{\to} \NChow(k)_F \stackrel{\overline{HP_\ast}}{\to} \mathrm{sVect}(F) & \Chow(k)_F \to \Num(k)_F \label{eq:func-1} \\
& \Chow(k')_F \stackrel{\Phi}{\to} \NChow(k')_F \stackrel{\overline{HP_\ast}}{\to} \mathrm{sVect}(F) & \Chow(k')_F \to \Num(k')_F \label{eq:func-2}\,.
\end{eqnarray}
Recall from the proof of \cite[Thm.~1.3]{Galois} that the left-hand-sides of \eqref{eq:func-1}-\eqref{eq:func-2} identify with the $2$-perioditization of de Rham cohomology 
$$Z \mapsto (\underset{n\,\,\mathrm{even}}{\bigoplus} H^n_{dR}(Z), \underset{n \,\,\mathrm{odd}}{\bigoplus} H^n_{dR}(Z))\,.$$
The proof of our claim follows now from the combination of Theorem \ref{thm:basechange1} with the commutativity of the other two squares of Theorem~\ref{thm:compatibility}. 
\end{proof}
\begin{lemma}\label{lem:aux-compatibility}
Let $Z$ and $Z'$ be two smooth projective $k$-schemes. When the field extension $k'/k$ is finite the following square commutes
\begin{equation}\label{eq:square-last}
\xymatrix{
K_0(Z_{k'} \times Z'_{k'})_F \ar[rrr]^-{\mathrm{ch}(-)\cdot \pi^\ast_{Z'_{k'}}(\mathrm{td}(Z'_{k'}))} &&& \cZ_{\mathrm{rat}}^\ast(Z_{k'} \times Z'_{k'})_F \\
K_0(Z\times Z')_F \ar[rrr]_-{\mathrm{ch}(-)\cdot \pi^\ast_{Z'}(\mathrm{td}(Z'))} \ar[u] &&& \cZ_{\mathrm{rat}}^\ast(Z\times Z')_F \ar[u] \,,
}
\end{equation}
where $\mathrm{ch}$ denotes the Chern character and $\mathrm{td}$ the Todd class.
\end{lemma}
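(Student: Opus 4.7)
The plan is to reduce the claimed commutativity to the standard functoriality of Chern character, Todd class, and Chow-theoretic pullback under the flat base-change morphism. Concretely, write $f_Z:Z_{k'}\to Z$, $f_{Z'}:Z'_{k'}\to Z'$, and $f:Z_{k'}\times Z'_{k'}\to Z\times Z'$ for the structure morphisms arising from $\mathrm{spec}(k')\to\mathrm{spec}(k)$, which are flat (in fact faithfully flat; finiteness of $k'/k$ is not essential here but is the only case we need). The vertical maps in the square \eqref{eq:square-last} are, by definition, the flat pullback $f^{\ast}$ on $K_0$ and on $\cZ^{\ast}_{\mathrm{rat}}(-)_F$.

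First I would record three elementary compatibilities: (a) $\mathrm{ch}(f^{\ast}E)=f^{\ast}\mathrm{ch}(E)$ in $\cZ^{\ast}_{\mathrm{rat}}(Z_{k'}\times Z'_{k'})_F$ for every $E\in K_0(Z\times Z')$, which is the naturality of the Chern character under pullback; (b) $\mathrm{td}(Z'_{k'})=f_{Z'}^{\ast}\mathrm{td}(Z')$, because the relative tangent sheaf of $f_{Z'}$ is trivial (the morphism is étale in the finite separable case, and in general the tangent sheaf of a product with $\mathrm{spec}(k')$ is pulled back), so $T_{Z'_{k'}}\simeq f_{Z'}^{\ast}T_{Z'}$ and Todd classes of pulled-back bundles are pullbacks of Todd classes; (c) the square of projections
\[
\xymatrix{
Z_{k'}\times Z'_{k'} \ar[r]^-{\pi_{Z'_{k'}}} \ar[d]_-{f} & Z'_{k'} \ar[d]^-{f_{Z'}} \\
Z\times Z' \ar[r]_-{\pi_{Z'}} & Z'
}
\]
commutes, hence $\pi_{Z'_{k'}}^{\ast}\circ f_{Z'}^{\ast}=f^{\ast}\circ \pi_{Z'}^{\ast}$.

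Combining these, for any class $E\in K_0(Z\times Z')_F$, the image along the lower-right path of \eqref{eq:square-last} is
\[
f^{\ast}\bigl(\mathrm{ch}(E)\cdot \pi_{Z'}^{\ast}(\mathrm{td}(Z'))\bigr)
= f^{\ast}\mathrm{ch}(E)\cdot f^{\ast}\pi_{Z'}^{\ast}(\mathrm{td}(Z'))
= \mathrm{ch}(f^{\ast}E)\cdot \pi_{Z'_{k'}}^{\ast}\bigl(f_{Z'}^{\ast}\mathrm{td}(Z')\bigr),
\]
using that flat pullback on $\cZ^{\ast}_{\mathrm{rat}}(-)_F$ is a ring homomorphism together with (a) and (c). By (b) the right-hand side equals $\mathrm{ch}(f^{\ast}E)\cdot \pi_{Z'_{k'}}^{\ast}(\mathrm{td}(Z'_{k'}))$, which is precisely the image of $E$ along the upper-left path. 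Hence \eqref{eq:square-last} commutes.

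The only subtle point is item (b): one must know that $\Omega^{1}_{Z'_{k'}/k'}\simeq f_{Z'}^{\ast}\Omega^{1}_{Z'/k}$, so that all Chern classes, and therefore the Todd class, of $T_{Z'_{k'}}$ coincide with the $f_{Z'}^{\ast}$-pullbacks of those of $T_{Z'}$; this is a formal consequence of the compatibility of Kähler differentials with base change and is where the statement really uses that we base-change along a field extension (rather than an arbitrary morphism). Everything else is a direct application of the projection-formula–type identities already built into flat pullback on $K_0$ and on rational Chow groups.
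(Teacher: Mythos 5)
Your proof is correct and follows essentially the same strategy as the paper's: identify the vertical maps with pullback along the base-change morphism, use naturality of the Chern character, and reduce the Todd-class discrepancy to the identity $\mathrm{td}(Z'_{k'}) = f_{Z'}^{\ast}\mathrm{td}(Z')$ together with the compatibility of the projection squares. The only real difference is in how the Todd-class identity is obtained: the paper cites Fulton's Example~18.3.9 (invoking the hypothesis that $k'/k$ is finite), while you derive it directly from base change for K\"ahler differentials, $\Omega^{1}_{Z'_{k'}/k'}\simeq f_{Z'}^{\ast}\Omega^{1}_{Z'/k}$, hence $T_{Z'_{k'}}\simeq f_{Z'}^{\ast}T_{Z'}$, and the functoriality of Todd classes. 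Your route is more self-contained and makes visible that finiteness of $k'/k$ is not actually needed for this lemma (it is needed elsewhere in Theorem~\ref{thm:compatibility}). One small cleanup: your parenthetical remark that $f_{Z'}$ is \'etale in the finite separable case is a detour; the K\"ahler-differentials base-change isomorphism you then invoke is the argument that actually works in all characteristics and for any field extension, so the \'etale observation can be dropped.
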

\begin{proof}
Recall first that we have a well-defined morphism $\iota\times\iota': Z_{k'}\times Z'_{k'} \to Z\times Z'$ of smooth projective $k$-schemes. Since the functors $K_0(-)_F$ and $\cZ_{\mathrm{rat}}^\ast(-)_F$ are independent of the base field the vertical homomorphisms of \eqref{eq:square-last} are given by
\begin{eqnarray*}
K_0(Z\times Z')_F \stackrel{(\iota\times\iota')^\ast}{\too} K_0(Z_{k'}\times Z'_{k'})_F &&
\cZ_{\mathrm{rat}}^\ast(Z\times Z')_F \stackrel{(\iota\times\iota')^\ast}{\too} \cZ_{\mathrm{rat}}^\ast(Z_{k'}\times Z'_{k'})_F \,.
\end{eqnarray*}
Therefore, if one ignores the Todd class the above square \eqref{eq:square-last} commutes since $\mathrm{ch}(-)$ is a natural transformation of functors between $K_0(-)_F$ and $\cZ_{\mathrm{rat}}^\ast(-)_F$. In what concerns the Todd class, recall from \cite[Example~18.3.9]{Fulton} that since by hypothesis the field extension $k'/k$ is finite we have $(\iota')^\ast(\mathrm{td}(Z'))=\mathrm{td}(Z'_{k'})$. Consequently, the following equality holds
$$
(\iota\times \iota')^\ast (\pi^\ast_{Z'}(\mathrm{td}(Z'))) =  \pi^\ast_{Z_{k'}}((\iota')^\ast(\mathrm{td}(Z')))=  \pi^\ast_{Z_{k'}}(\mathrm{td}(Z'_{k'}))\,.
$$
By combining it with the above arguments we conclude that \eqref{eq:square-last} commutes. This achieves the proof.
\end{proof}
\subsection{Motivic decompositions}\label{sub:decompositions}
Let ${\bf L} \in \Chow(k)_F$ be the Lefschetz motive (= the $\otimes$-inverse of the Tate motive $\bbQ(1)$) and ${\bf 1}:=\cU(k)$ the $\otimes$-unit of $\NChow(k)_F$. Consider the following natural notions:
\begin{itemize}
\item[(i)] A Chow motive in $\Chow(k)_F$ is called of {\em Lefschetz type} if it is isomorphic to ${\bf L}^{\otimes l_1} \oplus \cdots \oplus {\bf L}^{\otimes l_m}$ for some choice of non-negative integers $l_1, \ldots, l_m$.
\item[(ii)] A noncommutative Chow motive in $\NChow(k)_F$ is called of {\em unit type} if it is isomorphic to $\oplus_{i=1}^m {\bf 1}$ for a certain non-negative integer $m$.
\end{itemize}
Intuitively speaking, (i)-(ii) are the simplest kinds of (noncommutative) Chow motives. As explained in \cite[\S2]{MT}, the Chow motives of projective spaces, quadrics, rational surfaces, complex Fano threefolds with vanishing odd cohomology and certain classes of homogeneous spaces and moduli spaces, are of Lefschetz type. On the other hand, as proved in \cite[\S4]{MT}, whenever the derived category $\cD_\perf(Z)$ of perfect complexes of $\cO_Z$-complexes of a smooth projective $k$-scheme $Z$ admits a full exceptional collection (see \cite[\S1.4]{Huy}) the associated noncommutative Chow motive $\cU(\cD_\perf^\dg(Z))$ is of unit type. An important (and difficult) question in the world of Chow motives is the following:

\vspace{0.1cm}

{\it Question: Given a Chow motive $M \in \Chow(k)_F$, is there a field extension $k'/k$ under which $M_{k'}$ becomes of Lefschetz type ? }

\vspace{0.1cm}

Our work above provides new tools to address this question. As proved in \cite[Thms.~1.3 and 1.7]{MT}, a Chow motive $M \in \Chow(k)_F$ is of Lefschetz type if and only if the associated noncommutative Chow motive $\Phi(M) \in \NChow(k)_F$ is of unit type. By combining this equivalence with the above Theorem~\ref{thm:compatibility} we obtain the following result:
\begin{corollary}\label{cor:application-1}
Let $k'/k$ be an arbitrary field extension and $M \in \Chow(k)_F$ a Chow motive. Then, $M_{k'} \in \Chow(k')_F$ is of Lefschetz type if and only if the noncommutative Chow motive $\Phi(M) \otimes_k k'$ is of unit type.
\end{corollary}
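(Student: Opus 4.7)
The plan is to reduce the corollary to two ingredients already in hand: the compatibility square $\Phi(M_{k'}) \simeq \Phi(M)\otimes_k k'$ in $\NChow(k')_F$, and the equivalence from \cite{MT} that $N \in \Chow(\ell)_F$ is of Lefschetz type if and only if $\Phi(N) \in \NChow(\ell)_F$ is of unit type (applied over $\ell=k'$). Granted the compatibility, the chain of equivalences
\[
M_{k'} \text{ is Lefschetz} \;\iff\; \Phi(M_{k'}) \text{ is unit type} \;\iff\; \Phi(M)\otimes_k k' \text{ is unit type}
\]
immediately yields the claim.

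The only point that requires thought is that Corollary~\ref{cor:application-1} is stated for an \emph{arbitrary} field extension $k'/k$, whereas Theorem~\ref{thm:compatibility} establishes the compatibility square only for finite $k'/k$. I would handle this by a filtered colimit argument analogous to Proposition~\ref{prop:key}. Write $k' = \mathrm{colim}_{i \in I}\, k_i$ as the filtered colimit of its finite sub-extensions $k_i/k$. On the commutative side, an isomorphism exhibiting $M_{k'}$ as $\bigoplus_{j=1}^m \mathbf{L}^{\otimes l_j}$ is given by finitely many rational algebraic cycles on products of $Z_{k'}$ (where $Z$ is any smooth projective $k$-scheme carrying $M$); each such cycle is defined over some $k_i$, so $M_{k'}$ is of Lefschetz type iff $M_{k_i}$ is of Lefschetz type for all sufficiently large $i \in I$. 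On the noncommutative side, the analogous descent follows directly from Proposition~\ref{prop:key} applied to the saturated dg algebras underlying $\Phi(M)$ and $\bigoplus_{j=1}^m \mathbf{1}$: an isomorphism $\Phi(M)\otimes_k k' \simeq \bigoplus_{j=1}^m \mathbf{1}$ in $\NChow(k')_F$ descends to $\NChow(k_i)_F$ for some sufficiently large $i$. Combining these two descent statements with the finite case of Theorem~\ref{thm:compatibility} (applied to $k_i/k$) and the \cite{MT} equivalence over $k_i$ gives the general case.

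The main obstacle I expect is making the descent on the commutative side precise while keeping the exposition short, since the paper's colimit machinery is phrased only for derived categories of dg algebras. A clean way out is to observe that the idempotent $p \in \End_{\Chow(k')_F}(M_{k'} \oplus \bigoplus_j \mathbf{L}^{\otimes l_j})$ cutting out such a splitting lives in $\cZ^{\ast}_{\mathrm{rat}}$ of a product of smooth projective $k$-schemes, and any such $F$-linear combination of algebraic cycles is automatically defined over a finitely generated sub-extension of $k'/k$, hence over some $k_i$; being idempotent is a closed condition that descends as well. With this observation the two sides are treated symmetrically and the corollary drops out of Theorem~\ref{thm:compatibility} together with the \cite{MT} criterion.
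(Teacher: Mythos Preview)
Your approach is essentially the same as the paper's: reduce to the finite case via Theorem~\ref{thm:compatibility} plus the \cite{MT} criterion, and handle an infinite extension by a filtered colimit/descent argument over finite sub-extensions (the paper phrases this as equivalences $\mathrm{colim}_i\Chow(k_i)_F\simeq\Chow(k')_F$ and $\mathrm{colim}_i\NChow(k_i)_F\simeq\NChow(k')_F$, citing the method of Proposition~\ref{prop:colim}, rather than arguing with cycles directly). One small slip: you write that a cycle over $k'$ descends to a \emph{finitely generated} sub-extension ``hence over some $k_i$'', but your $k_i$ were declared to be \emph{finite} over $k$; these coincide only when $k'/k$ is algebraic, so for transcendental extensions you should either enlarge your indexing family to finitely generated sub-extensions (Theorem~\ref{thm:compatibility} and Lemma~\ref{lem:aux-compatibility} go through there too) or note that the Todd-class identity in Lemma~\ref{lem:aux-compatibility} actually holds for arbitrary base change, making the compatibility square available directly over $k'$.
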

\begin{proof}
If the field extension $k'/k$ is finite the proof is clear. On the other hand if $k'/k$ is not finite let us denote by $\{k_i\}_{i\in I}$ the filtered diagram of intermediate field extensions $k'/k_i/k$ that are finite over $k$. An argument similar to the one used in the proof of Proposition~\ref{prop:colim} furnish us equivalences of categories
\begin{eqnarray*}
\mathrm{colim}_i\Chow(k_i)_F \simeq \Chow(k')_F && \mathrm{colim}_i\NChow(k_i)_F \simeq \NChow(k')_F\,.
\end{eqnarray*}
Consequently, the isomorphisms $M_{k'}\simeq {\bf L}^{\otimes l_1} \oplus \cdots \oplus {\bf L}^{\otimes l_m}$ and $\Phi(M) \otimes_k k' \simeq \oplus_{i=1}^m {\bf 1}$ always occur at a field extension $k_i/k$ which is finite over $k$. This achieves the proof.
\end{proof}
Intuitively speaking, Corollary~\ref{cor:application-1} show us that in order to test if the Chow motive $M_{k'}$ is of Lefschetz type we can (alternatively) use the noncommutative world. Note that in the particular case where $M=M(Z)$, with $Z$ a smooth projective $k$-scheme, the equivalence of Corollary~\ref{cor:application-1} reduces to
\begin{equation}\label{eq:equivalence}
 M(Z_{k'})\,\,\mathrm{Lefschetz}\,\,\mathrm{type} \Leftrightarrow \cU(\cD_\perf^\dg(Z) \otimes_kk')\,\,\mathrm{unit}\,\,\mathrm{type}\,.
 \end{equation}
Hence, instead of testing $M(Z_{k'})$ one can test instead the noncommutative Chow motive $\cU(\cD_\perf^\dg(Z)\otimes_kk')$. For example, when $Z$ is a Severi-Brauer variety of dimension $n$ we have a semi-orthogonal decomposition\footnote{A generalization of the notion of full exceptional collection; see \cite[\S1.4]{Huy}.}
$$ \cD_\perf^\dg(Z) = \langle \cD_\perf^\dg(k), \cD_\perf^\dg(A),\ldots, \cD_\perf^\dg(A^{\otimes n}) \rangle\,,$$
where $A$ is the (noncommutative) Azumaya algebra associated to $Z$; see \cite{Marcello} for instance. As proved in \cite[\S5]{MT}, semi-orthogonal decompositions become direct sums in $\NChow(k)_F$. Moreover, $A\otimes_k\overline{k} \simeq \overline{k}$. Hence, one concludes that $\cU(\cD_\perf^\dg(Z)\otimes_k\overline{k})\simeq \oplus_{i=1}^{n+1} {\bf 1}$ and consequently, using the above equivalence \eqref{eq:equivalence}, that $M(Z_{\overline{k}})$ is of Lefschtez type. Since $Z$ is a Severi-Brauer variety we know a priori that $Z_{\overline{k}}\simeq \bbP_{\overline{k}}^{n+1}$ and hence that $M(Z_{\overline{k}})$ is of Lefschetz type. However, we believe this strategy can be generalized to other examples because the existence of semi-orthogonal decompositions is quite frequent; see \cite{Bondal-ICM,Huy,Rouquier} for instance.

Now, recall from the proof of Theorem~\ref{thm:basechange1} the notion of additive invariant. Thanks to the work of Blumberg-Mandell, Keller, Schlichting, Thomason-Trobaugh, Waldhausen, Weibel, and others (see \cite{BM,Exact,Exact2,Negative,Fundamental,MacLane-spectral,TT,Wald,Weibel-KH}), algebraic $K$-theory, Hochschild homology, cyclic homology, periodic cyclic homology, negative cyclic homology, topological Hochschild homology, and topological cyclic homology, are all examples of additive invariants. As proved in {\em loc. cit.}, when applied to $\cD_\perf^\dg(Z)$ these invariants reduce to the (classical) invariants of $Z$. Given an additive invariant $E$ let us then write $E(Z)$ instead of $E(\cD_\perf^\dg(Z))$.
\begin{corollary}\label{cor:final}
Let $Z$ be a smooth projective $k$-scheme such that $M(Z_{k'})$ is of Lefschetz type. Then, $E(Z_{k'})\simeq \oplus^m_{i=1}E(k')$ for every additive invariant $E$.
\end{corollary}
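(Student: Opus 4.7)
The plan is to combine Corollary~\ref{cor:application-1} with the universal property of additive invariants recorded in Theorem~\ref{thm:universal}. First, the natural identification $\cD_\perf^\dg(Z)\otimes_k k'\simeq \cD_\perf^\dg(Z_{k'})$ (used already in the proof of Theorem~\ref{thm:compatibility}) gives $\Phi(M(Z))\otimes_k k' \simeq \cU(\cD_\perf^\dg(Z_{k'}))$ in $\NChow(k')_F$. Hence Corollary~\ref{cor:application-1}, applied to $M=M(Z)$, translates the hypothesis that $M(Z_{k'})$ is of Lefschetz type into the concrete direct-sum decomposition
\begin{equation*}
\cU(\cD_\perf^\dg(Z_{k'}))\;\simeq\;\bigoplus_{i=1}^{m}\mathbf{1}\qquad\text{in}\ \NChow(k')_F,
\end{equation*}
where $\mathbf{1}=\cU(k')$.

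Next, I would transport this decomposition across an arbitrary additive invariant $E$. By Theorem~\ref{thm:universal} every additive invariant factors uniquely through the universal additive invariant $\dgcat(k')\to\Hmo_0(k')$; restricting to saturated dg categories and recalling from the proof of Proposition~\ref{prop:induced} that $\NChow(k')_F$ is obtained from $\sHmo_0(k')\subset\Hmo_0(k')$ by $F$-linearizing morphism groups and then passing to the pseudo-abelian envelope, any additive invariant whose target is $F$-linear and idempotent complete descends to a functor on $\NChow(k')_F$. Applying such a descended $E$ to the displayed isomorphism yields at once
\begin{equation*}
E(Z_{k'})=E(\cD_\perf^\dg(Z_{k'}))\;\simeq\;\bigoplus_{i=1}^{m}E(\mathbf{1})=\bigoplus_{i=1}^{m}E(k'),
\end{equation*}
which is the statement of the corollary. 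For the standard examples listed in the paragraph preceding the corollary (algebraic $K$-theory, Hochschild, cyclic, periodic cyclic, negative cyclic, topological Hochschild and topological cyclic homology), an $F$-linear idempotent-complete target is obtained after the usual rationalization.

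The only real obstacle is precisely the step of descending $E$ from $\Hmo_0(k')$ to $\NChow(k')_F$: the $F$-linearization and pseudo-abelian envelope built into $\NChow(k')_F$ impose $F$-linearity and idempotent completeness on the target, and these are the mild hypotheses one has to place on $E$ (or, equivalently, on its coefficients) in order for the conclusion even to make sense. Once that is in place, no further computation is needed; everything follows by applying $E$ to the isomorphism supplied by Corollary~\ref{cor:application-1}.
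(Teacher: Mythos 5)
Your argument is essentially the paper's own proof: identify $\Phi(M(Z))\otimes_k k'$ with $\cU(\cD_\perf^\dg(Z_{k'}))$, invoke Corollary~\ref{cor:application-1} to get the unit-type decomposition in $\NChow(k')_F$, and then descend the additive invariant $E$ through $\Hmo_0(k')$ via Theorem~\ref{thm:universal} to apply it to that decomposition. The one place you go beyond the paper's terse proof is in flagging the implicit hypothesis: the decomposition $\cU(\cD_\perf^\dg(Z_{k'}))\simeq\oplus_{i=1}^m\mathbf{1}$ lives in the $F$-linearized, pseudo-abelianized category $\NChow(k')_F$, so for $E$ to transport it one needs the target of $E$ to be $F$-linear and idempotent complete (or one passes to the rationalization/$F$-linearization of $E$). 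The paper smuggles this in by writing $\Hmo_0(k')_F$ in the proof without comment, so your remark is a genuine and useful precision rather than a deviation; the underlying route and the key ingredients (Corollary~\ref{cor:application-1} and Theorem~\ref{thm:universal}) are the same.
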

\begin{proof}
This follows from the combination of Corollary~\ref{cor:application-1} with the fact that every additive invariant $E$ descends to $\Hmo_0(k')_F$ and hence to $\NChow(k')_F$; see Theorem~\ref{thm:universal}.
\end{proof}

Corollary~\ref{cor:final} can be used in order to prove negative results. For instance, whenever one finds an additive invariant $E$ such that $E(Z_{k'})\neq \oplus_{i=1}^mE(k')$ one concludes that $M(Z_{k'})$ cannot be of Lefschetz type. Due to the numerous additive invariants in the literature we believe that Corollary~\ref{cor:final} is a pratical new tool to establish such negative results.
\subsection{Schur/Kimura finiteness}\label{sub:Schur}
Let $\cC$ be a $\bbQ$-linear, idempotent complete, symmetric monoidal category. Given a partition $\lambda$ of a natural number $n$, we can consider the associated Schur functor ${\bf S}_\lambda:\cC\to \cC$ which sends an object $c$ to the direct summand of $c^{\otimes n}$ determined by $\lambda$; see Deligne's foundational work \cite{Deligne}. We say that $c$ is {\em Schur-finite} if it is annihilated by some Schur functor. When $\lambda=(n)$, resp. $\lambda=(1,\ldots, 1)$, the associated Schur functor $\mathrm{Sym}^n:={\bf S}_{(n)}$, resp. $\mathrm{Alt}^n:={\bf S}_{(1,\ldots, 1)}$, should be considered as the analogue of the usual $n^{\mathrm{th}}$-symmetric, resp. $n^{\mathrm{th}}$-wedge, product of $\bbQ$-vector spaces. We say that $c$ is {\em evenly}, resp. {\em oddly}, {\em finite dimensional} if $\mathrm{Alt}^n(c)$, resp. $\mathrm{Sym}^n(c)$, vanishes for some $n$. Finally, $c$ is {\em Kimura-finite} if it admits a direct sum decomposition $c \simeq c_+ \oplus c_-$ into an evenly $c_+$ and an oddly $c_-$ finite dimensional object. Note that Kimura-finiteness implies Schur-finiteness. 

The categories $\Chow(k)_F$ and $\NChow(k)_F$ are by construction $\bbQ$-linear, idempotent complete, and symmetric monoidal. Hence, the above notions apply. In the commutative world these finiteness notions were extensively studied by Andr{\'e}-Kahn, Guletskii, Guletskii-Pedrini, Kimura, Mazza, and others; see~\cite{Andre,AK,Guletskii,GP,Kimura,Mazza}. For instance, Guletskii and Pedrini proved that given a smooth projective surface $S$ (over a field of characteristic zero) with $p_g(S)=0$, the Chow motive $M(S)$ is Kimura-finite if and only if Bloch's conjecture on the Albanese kernel for $S$ holds. Similarly to \S\ref{sub:decompositions}, we have the following important question:

\vspace{0.1cm}

{\it Question: Given a Chow motive $M \in \Chow(k)_F$, is there a field extension $k'/k$ under which $M_{k'}$ is Schur/Kimura-finite ?}

\vspace{0.1cm}

Our above work also provides new tools to attack this question. As proved in \cite[Thm.~2.1]{CvsNC}, a Chow motive $M \in \Chow(k)_F$ is Schur-finite if and only if the associated noncommutative Chow motive $\Phi(N) \in \NChow(k)_F$ is Schur-finite. Moreover, if $M$ is Kimura-finite then $\Phi(M)$ is also Kimura-finite. Thanks to Theorem~\ref{thm:compatibility} we then obtain the following result:
\begin{corollary}\label{cor:application-2}
Let $k'/k$ be an arbitrary field extension and $M\in \Chow(k)_F$ a Chow motive. Then, the following holds:
\begin{itemize}
\item[(i)] The Chow motive $M_{k'}\in \Chow(k')_F$ is Schur-finite if and only if the noncommutative Chow motive $\Phi(M)\otimes_kk'$ is Schur-finite.
\item[(ii)] If $M_{k'}$ is Kimura-finite, then $\Phi(M)\otimes_kk'$ is also Kimura-finite.
\end{itemize}
\end{corollary}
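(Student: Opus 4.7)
The plan is to combine two ingredients that are already available in the paper. The first is the Schur/Kimura-finiteness bridge \cite[Thm.~2.1]{CvsNC} (recalled in the paragraph preceding the statement): for any base field $l$ and any $N \in \Chow(l)_F$, one has that $N$ is Schur-finite if and only if $\Phi(N) \in \NChow(l)_F$ is Schur-finite, and Kimura-finiteness of $N$ implies Kimura-finiteness of $\Phi(N)$. The second is the commutative/noncommutative compatibility of base-change established in Theorem~\ref{thm:compatibility}, which for a finite extension $k'/k$ yields a natural isomorphism $\Phi(M_{k'}) \simeq \Phi(M) \otimes_k k'$ in $\NChow(k')_F$.

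The finite case is then immediate: apply \cite[Thm.~2.1]{CvsNC} to $M_{k'} \in \Chow(k')_F$ and transport the resulting Schur/Kimura-finiteness assertions across the isomorphism $\Phi(M_{k'}) \simeq \Phi(M) \otimes_k k'$. This proves (i) and (ii) whenever $k'/k$ is finite.

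To extend to an arbitrary extension $k'/k$, I would repeat the filtered-colimit reduction employed in the proof of Corollary~\ref{cor:application-1}. Write $k' = \mathrm{colim}_i\, k_i$ over the filtered system of intermediate extensions $k'/k_i/k$ with $k_i/k$ finite, and invoke the equivalences $\mathrm{colim}_i\, \Chow(k_i)_F \simeq \Chow(k')_F$ and $\mathrm{colim}_i\, \NChow(k_i)_F \simeq \NChow(k')_F$ supplied by the analogue of Proposition~\ref{prop:colim}. Schur-finiteness of an object is the vanishing of some $\mathbf{S}_\lambda$ applied to it, and Kimura-finiteness is witnessed by a finite collection of idempotents splitting the object into an even and an odd part; both data are expressed in terms of finitely many morphisms and relations, hence descend to some finite stage $k_i$. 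Conversely, since the functors $-\otimes_{k_i}k'$ in both the commutative and noncommutative settings are $F$-linear symmetric monoidal, they preserve Schur- and Kimura-finiteness, so one can lift the conclusion from $k_i$ to $k'$. Combining this with the finite case yields both (i) and (ii) in full generality.

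The only genuine obstacle is the passage from finite to arbitrary extensions, because Theorem~\ref{thm:compatibility} is stated only in the finite setting. The filtered-colimit argument is the natural remedy, and the verification reduces to the standard fact that the finite categorical data encoding Schur- and Kimura-finiteness are stable under filtered colimits of $F$-linear symmetric monoidal categories; no new input beyond Theorem~\ref{thm:compatibility}, \cite[Thm.~2.1]{CvsNC}, and the colimit description of $\Chow(k')_F$ and $\NChow(k')_F$ is required.
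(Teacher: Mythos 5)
Your proposal is correct and follows essentially the same route as the paper: reduce to the finite case via Theorem~\ref{thm:compatibility} together with \cite[Thm.~2.1]{CvsNC}, and handle the general case by the filtered-colimit description of $\Chow(k')_F$ and $\NChow(k')_F$, descending the finite Schur/Kimura data to a finite intermediate extension. In fact you spell out the descent step (why the vanishing of a Schur functor, resp.\ the splitting idempotent, is realized at some finite $k_i$) more explicitly than the paper, which simply defers to the proof of Corollary~\ref{cor:application-1}.
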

\begin{proof}
The proof is similar to the one of Corollary~\ref{cor:application-1}.
\end{proof}
We believe that Corollary~\ref{cor:application-2}, combined with a strategy similar to the one of \S\ref{sub:decompositions}, can be used in the search of new Schur-finite Chow motives.
\begin{corollary}\label{cor:very-last}
Let $Z$ be a smooth projective $k$-scheme such that $M(Z_{k'})$ is Schur/Kimura-finite. Then, $E(Z_{k'})$ is also Schur/Kimura-finite for every symmetric monoidal additive invariant $E$.
\end{corollary}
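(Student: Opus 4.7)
The plan is to reduce Corollary~\ref{cor:very-last} to two independent ingredients: (a) Corollary~\ref{cor:application-2}, which transfers Schur/Kimura-finiteness from the commutative Chow motive $M(Z_{k'})$ to the noncommutative Chow motive $\cU(\cD_\perf^\dg(Z_{k'}))$; and (b) the general fact that a symmetric monoidal additive invariant descends through the noncommutative motivic construction and therefore preserves Schur/Kimura-finiteness. Combining these, $E(Z_{k'})$ will inherit the same finiteness property as $M(Z_{k'})$.

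First I would apply Corollary~\ref{cor:application-2} to $M=M(Z)$. Using the identification $\Phi(M(Z))=\cU(\cD_\perf^\dg(Z))$ from diagram~\eqref{eq:bridge} together with the equivalence $\cD_\perf^\dg(Z)\otimes_k k' \simeq \cD_\perf^\dg(Z_{k'})$ recalled in the proof of Theorem~\ref{thm:compatibility}, one obtains $\Phi(M(Z))\otimes_k k' \simeq \cU(\cD_\perf^\dg(Z_{k'}))$. Hence, whenever $M(Z_{k'})$ is Schur-finite (resp. Kimura-finite), the noncommutative Chow motive $\cU(\cD_\perf^\dg(Z_{k'}))$ is Schur-finite (resp. Kimura-finite) inside $\NChow(k')_F$.

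Next, I would show that any symmetric monoidal additive invariant $E$ descends to a symmetric monoidal functor out of $\NChow(k')_F$. By Theorem~\ref{thm:universal} the additive invariant $E$ factors through the universal functor $\dgcat(k')\to \Hmo_0(k')$; restricting to saturated dg categories, tensoring morphism groups with $F$, and passing to the pseudo-abelian envelope yields (by the description of $\NChow(k')_F$ given in the proof of Proposition~\ref{prop:induced}) a symmetric monoidal $F$-linear functor $\overline{E}:\NChow(k')_F\to \mathsf{A}$ which satisfies $\overline{E}(\cU(\cD_\perf^\dg(Z_{k'})))=E(Z_{k'})$. Since Schur functors are defined in purely $\otimes$-categorical terms (direct summands of $c^{\otimes n}$ cut out by Young symmetrizers), any $\bbQ$-linear symmetric monoidal functor between idempotent complete categories commutes with every ${\bf S}_\lambda$ and with Kimura-type decompositions $c\simeq c_+\oplus c_-$; consequently, $\overline{E}$ sends Schur-finite (resp. Kimura-finite) objects to Schur-finite (resp. Kimura-finite) objects.

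Putting the two steps together, we conclude that $E(Z_{k'})=\overline{E}(\cU(\cD_\perf^\dg(Z_{k'})))$ is Schur-finite (resp. Kimura-finite), as desired. The only delicate point is verifying that the general recipe of Theorem~\ref{thm:universal} actually produces a \emph{symmetric monoidal} extension to $\NChow(k')_F$; this is however routine, since the symmetric monoidal structures on $\dgcat(k')$, $\Hmo_0(k')$, $\sHmo_0(k')$, and $\NChow(k')_F$ are all induced by the same tensor product of dg categories, and the symmetric monoidality of $E$ propagates along each of these identifications.
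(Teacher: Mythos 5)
Your argument is essentially identical to the paper's: combine Corollary~\ref{cor:application-2} with the observations that every symmetric monoidal additive invariant descends to a symmetric monoidal $\bbQ$-linear additive functor on $\NChow(k')_F$ and that such functors preserve Schur/Kimura-finiteness. You have merely spelled out the intermediate identifications (the compatibility $\Phi(M(Z))\otimes_k k'\simeq\cU(\cD_\perf^\dg(Z_{k'}))$ and the routine $\otimes$-categorical verification) that the paper leaves implicit.
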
 
\begin{proof}
This is simply the combination of Corollary~\ref{cor:application-2} with the following two facts: (i) every symmetric monoidal additive invariant $E$ descends to a symmetric monoidal additive $\bbQ$-linear functor defined on $\NChow(k)_F$; (ii) every symmetric monoidal additive $\bbQ$-linear functor preserves Schur/Kimura-finiteness. 
\end{proof}
Examples of symmetric monoidal additive invariants include, among others, topological Hochschild homology, topological cyclic homology, and Kassel's mixed complex construction\footnote{From which all variants of cyclic homology can be recovered.} (see \cite[\S5.3]{ICM}). These invariants take values respectively in the category of rational spectra, rational pro-spectra, and in the derived category of mixed complexes.

\begin{remark}
It was conjectured by Kimura \cite{Kimura} that every Chow motive is Kimura-finite (and hence Schur-finite). Using Corollary \ref{cor:application-2}, this implies that every additive invariant take values in Kimura finite objects.
\end{remark}
\section{Proof of Theorem~\ref{thm:main4}}\label{sec:proof4}
We need to prove that $P$ is surjective, that the composition $P\circ I$ is trivial, that $\mathrm{Ker}(P)\subseteq \mathrm{Im}(I)$, and that $I$ is injective.
\subsection*{Surjectivity of $P$}
This follows from items (ii) and (iii) of Theorem \ref{thm:main3}.
\subsection*{Triviality of the composition $P\circ I$}
Let $\varphi$ be an element of the noncommutative motivic Galois group $\mathrm{Gal}(\NNum^\dagger(\overline{k})_F)$, \ie a $\otimes$-automorphism of the fiber functor $\overline{HP_\ast}:\NNum^\dagger(\overline{k})_F \to \mathrm{Vect}(F)$. One needs to show that for every noncommutative Artin motive $L \in \NAM(k)_F$ the equality $\varphi_{L \otimes_k\overline{k}}=\id_{\overline{HP_\ast}(L\otimes_k\overline{k})}$ holds. Thanks to the description \eqref{eq:description} of the category $\NAM(k)_F$ (and Theorems~\ref{thm:main2}-\ref{thm:main3}) it suffices to consider the noncommutative Artin motives of the form $\cU(\cD_\perf^\dg(Z))$, with $Z$ a finite {\'e}tale $k$-scheme. Recall from \cite[\S1.3.1]{Andre} that $Z=\mathrm{spec}(A)$, with $A$ a finite dimensional {\'e}tale $k$-algebra. As a consequence, the dg category $\cD_\perf^\dg(Z)$ is naturally derived Morita equivalence to $\cD_\perf^\dg(A)$ and hence to $A$. The noncommutative Artin motive $\cU(\cD_\perf^\dg(Z))$ identifies then with $A$ (considered as a dg $k$-algebra concentrated in degree zero). Since $A$ is {\'e}tale, we have an isomorphism $A \otimes_k\overline{k}\simeq \overline{k}^{\mathrm{dim}(A)}$ of $k$-algebras, where $\mathrm{dim}(A)$ denote the dimension of $A$. Hence, the image of $A$ under the composed functor $\AM(k)_F \subset \NNum^\dagger(k)_F \stackrel{-\otimes_k\overline{k}}{\too} \NNum^\dagger(\overline{k})_F$ identifies with the direct sum of $\mathrm{dim}(A)$ copies of the tensor unit $\overline{k}$. This implies that $ \varphi_{A\otimes_k \overline{k}} = \id_{\overline{HP_\ast}(A\otimes_k \overline{k})}$ and so we conclude that the composition $P\circ I$ is trivial.
\subsection*{Injectivity of $I$}
We start with some general technical results. A subcategory $\cD_{\mathrm{dens}}$ of an idempotent complete category $\cD$ will be called {\em dense} if the pseudo-abelian envelope $(\cD_{\mathrm{dens}})^\natural$ agrees with $\cD$.
\begin{proposition}\label{prop:monomorphism}
Given a field $K$, let $\cC$ and $\cD$ be two $K$-linear abelian idempotent complete rigid symmetric monoidal categories, $H: \cC \to \cD$ an exact $K$-linear $\otimes$-functor, and $\omega: \cD \to \mathrm{Vect}(K)$ a fiber functor. Assume that $\cD$ is semi-simple and that for every object $Y$ of a (fixed) dense subcategory $\cD_{\mathrm{dens}}$ of $\cD$ there exists an object $X \in \cC$ and a surjective morphism $f: H(X) \twoheadrightarrow Y$. Then, the induced group homomorphism
\begin{eqnarray}\label{eq:induced-Galois}
\mathrm{Gal}(\cD):=\underline{\mathrm{Aut}}^\otimes(\omega) \too \underline{\mathrm{Aut}}^\otimes(\omega\circ H) =:\mathrm{Gal}(\cC) && \varphi \mapsto \varphi \circ H
\end{eqnarray}
is injective.
\end{proposition}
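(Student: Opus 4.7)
The plan is to unpack the statement as an implication: the map \eqref{eq:induced-Galois} is injective if and only if any $\varphi \in \mathrm{Gal}(\cD)$ whose restriction $\varphi \circ H$ is the identity natural transformation must itself be the identity. So I would fix such a $\varphi$ and show that $\varphi_Z = \id_{\omega(Z)}$ for every object $Z$ of $\cD$.

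First I would reduce from all of $\cD$ to the dense subcategory $\cD_{\mathrm{dens}}$. Since $(\cD_{\mathrm{dens}})^{\natural} = \cD$ and $\cD$ is idempotent complete, every object $Z \in \cD$ is a direct summand of some $Y \in \cD_{\mathrm{dens}}$; let $i: Z \hookrightarrow Y$ and $p: Y \twoheadrightarrow Z$ be the corresponding split monomorphism and retraction with $p \circ i = \id_Z$. Applying the fiber functor $\omega$ (which preserves this splitting) and using naturality of $\varphi$ with respect to $i$ and $p$, one gets $\varphi_Z = \omega(p) \circ \varphi_Y \circ \omega(i)$. Hence if $\varphi_Y = \id_{\omega(Y)}$ for every $Y \in \cD_{\mathrm{dens}}$, then $\varphi_Z = \omega(p) \circ \omega(i) = \id_{\omega(Z)}$.

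It therefore remains to verify the statement on $\cD_{\mathrm{dens}}$. Given $Y \in \cD_{\mathrm{dens}}$, the hypothesis furnishes an object $X \in \cC$ and a surjection $f: H(X) \twoheadrightarrow Y$ in $\cD$. Naturality of $\varphi$ applied to $f$ yields the identity
\begin{equation*}
\varphi_Y \circ \omega(f) \;=\; \omega(f) \circ \varphi_{H(X)}.
\end{equation*}
Since $\varphi$ lies in the kernel of \eqref{eq:induced-Galois} we have $\varphi_{H(X)} = \id_{\omega(H(X))}$, so $\varphi_Y \circ \omega(f) = \omega(f)$. Because $\omega$ is a fiber functor, in particular exact and $K$-linear, it sends the epimorphism $f$ to an epimorphism $\omega(f)$ in $\mathrm{Vect}(K)$; consequently $\omega(f)$ is right-cancellable and we conclude $\varphi_Y = \id_{\omega(Y)}$.

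The argument above uses abelianness and exactness of $H$ and $\omega$ only through the fact that the surjection $f$ is preserved, and semi-simplicity of $\cD$ only as the background setting that guarantees the Tannakian formalism and ensures the dense subcategory approximates $\cD$ by quotients of objects from $H(\cC)$ in a well-behaved way. I do not anticipate a serious technical obstacle: the proof is essentially a naturality chase, with the one non-formal input being the exactness of the fiber functor $\omega$ needed to convert a surjection in $\cD$ into a surjection of $K$-vector spaces.
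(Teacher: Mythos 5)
Your proof is correct, and it takes a slightly different route at the final step. The paper also fixes $\varphi$ in the kernel, also reduces to $\cD_{\mathrm{dens}}$ via direct factors, and also invokes the surjection $f: H(X) \twoheadrightarrow Y$; but at that point the paper uses semi-simplicity of $\cD$ to produce a \emph{section} $s: Y \to H(X)$ of $f$ (this is Lemma~\ref{lem:section}), and then applies $\omega$ to get $\omega(f) \circ \omega(s) = \id_{\omega(Y)}$ and sandwiches $\varphi_Y$ between the identity maps. You instead apply $\omega$ directly to $f$ and use that a fiber functor is exact, so $\omega(f)$ is an epimorphism of $K$-vector spaces and hence right-cancellable, which lets you cancel it from $\varphi_Y \circ \omega(f) = \omega(f)$. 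Your route is a bit more economical: it avoids Lemma~\ref{lem:section} and, at least for this particular proposition, does not actually need the semi-simplicity hypothesis on $\cD$ (which the paper keeps around because the rest of Theorem~\ref{thm:general} requires it). Both arguments are naturality chases hinging on the surjectivity of $H(X) \twoheadrightarrow Y$; the paper spends semi-simplicity to split $f$ in $\cD$, while you spend exactness of $\omega$ to cancel $\omega(f)$ in $\mathrm{Vect}(K)$.
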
 
\begin{proof}Let $\varphi \in \mathrm{Gal}(\cD)$ be an element in the kernel of \eqref{eq:induced-Galois}, \ie $\varphi_{H(X)}=\id_{\omega(H(X))}$ for any $X \in \cC$. We need to show that $\varphi_Y=\id_{\omega(Y)}$ for every object $Y$ of $\cD$. It suffices to show this claim for the objects of $\cD_{\mathrm{dens}}\subset \cD$ since every object in $\cD$ is a direct factor of an object in $\cD_{\mathrm{dens}}$. Given an object $Y \in \cD_{\mathrm{dens}}$, there exists by hypothesis an object $X \in \cC$ and a surjective morphism $f: H(X) \twoheadrightarrow Y$. Since $\cD$ is abelian semi-simple, $f$ admits by Lemma~\ref{lem:section} a section $s:Y \to H(X)$ and so one can consider the following commutative diagram
$$
\xymatrix{
\omega(Y) \ar[d]_-{\varphi_Y} \ar[r]^-{\omega(s)} & \omega(H(X)) \ar@{=}[d] \ar[r]^-{\omega(f)} & \omega(Y) \ar[d]^-{\varphi_Y} \\
\omega(Y)  \ar[r]_-{\omega(s)}& \omega(H(X)) \ar[r]_-{\omega(f)} & \omega(Y) \,,
}
$$
with $\omega(f) \circ \omega(s)=\id_{\omega(Y)}$. As a consequence, $\varphi_Y=\id_{\omega(Y)}$ and so the proof is finished.
\end{proof}
\begin{lemma}\label{lem:section}
Any surjective morphism $f:X \twoheadrightarrow Y$ in an abelian semi-simple $K$-linear category $\cD$ admits a section.
\end{lemma}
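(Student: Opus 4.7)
The plan is entirely formal: recast the existence of a section as the splitting of a short exact sequence, then invoke semi-simplicity. First, I would form the kernel $N := \ker(f)$ so that the surjection $f$ fits into a short exact sequence
$$ 0 \to N \to X \stackrel{f}{\to} Y \to 0 $$
in $\cD$. A section $s: Y \to X$ of $f$ exists if and only if this sequence splits, so it suffices to exhibit such a splitting.

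Next, I would exploit the hypothesis that $\cD$ is abelian semi-simple. Concretely, decompose $Y \simeq \bigoplus_{i \in I} S_i$ with each $S_i$ a simple object (this uses both semi-simplicity and the idempotent completeness already built into the setting of interest). For each simple $S_i$, pull back along $S_i \hookrightarrow Y$ to obtain a short exact sequence $0 \to N \to X_i \to S_i \to 0$. The vanishing $\mathrm{Ext}^1_\cD(S_i, N) = 0$ — which is one of the standard equivalent formulations of semi-simplicity — then guarantees a splitting $s_i : S_i \to X_i \subseteq X$. Assembling the $s_i$ into a single morphism $s := \bigoplus_i s_i : Y \to X$ produces the desired section.

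The only point to justify is the $\mathrm{Ext}^1$-vanishing. If one prefers to avoid that formalism, one may instead argue directly: since $X$ is itself semi-simple, write $X \simeq \bigoplus_j T_j$ with $T_j$ simple; then the restriction $f|_{T_j} : T_j \to Y$ is either zero or a split monomorphism onto a simple summand of $Y$ (by Schur's lemma), and collecting representatives for each simple summand of $Y$ produces a section. Either way, no real obstacle arises — the lemma is a direct consequence of the definition of semi-simplicity, and it is invoked in the excerpt only to supply the section $s$ appearing in the commutative diagram at the end of the proof of Proposition~\ref{prop:monomorphism}.
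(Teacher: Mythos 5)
Your proposal is correct, and your second (``direct'') route is essentially the argument the paper gives: the paper decomposes both $X$ and $Y$ into finite direct sums of simple objects and then appeals to $\Hom_\cD(S_i,S_j)=\delta_{ij}\cdot K$ to produce the section, which amounts to the same Schur-lemma bookkeeping you carry out when you observe that each $f|_{T_j}$ is zero or a split monomorphism onto a simple summand. Two small remarks worth making explicit. First, your $\mathrm{Ext}^1$-route is even more economical than the paper's proof — in an abelian category, ``semi-simple'' is commonly taken to \emph{mean} that every short exact sequence splits, in which case the lemma is immediate and the decomposition into simples is never needed; the paper evidently adopts the ``every object is a finite direct sum of simples'' formulation, which is why it (and your alternative argument) passes through the decomposition. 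Second, the paper's displayed fact $\Hom_\cD(S_i,S_j)=\delta_{ij}\cdot K$ is stronger than what the lemma actually requires: Schur's lemma only gives $\Hom_\cD(S_i,S_j)=0$ for $S_i\not\simeq S_j$ and $\End_\cD(S_i)$ a division $K$-algebra, and that already suffices to build a section, as your argument shows; the stronger statement holds in the paper's intended Tannakian setting but is not needed here. In short: no gap, same underlying idea, with your $\mathrm{Ext}^1$ framing being a slightly cleaner packaging of the same fact.
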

\begin{proof}
Since by hypothesis $\cD$ is abelian semi-simple, every object admits a unique finite direct sum decomposition into simple objects. Hence, $f$ corresponds to a surjective morphism $S_1 \oplus \cdots  \oplus S_n \twoheadrightarrow S_1 \oplus \cdots \oplus S_m$ between direct sums of simple objects. Since $\Hom_\cD(S_i,S_j)=\delta_{ij}\cdot K$, we conclude then that $f$ admits a section.
\end{proof}
In order to prove that $I$ is injective we will make use of the general Proposition~\ref{prop:monomorphism} (applied to $K=F$ and $H=-\otimes_k\overline{k}:\NNum^\dagger(k)_F \to \NNum^\dagger(\overline{k})_F$). By construction, the saturated dg $\overline{k}$-algebras form a dense subcategory of $\NNum^\dagger(\overline{k})_F$; see \S\ref{sub:numerical}. Hence, it suffices to show that for every saturated dg $\overline{k}$-algebra $B$ there exists a saturated dg $k$-algebra $A$ and a surjective morphism $f: A\otimes_k \overline{k} \twoheadrightarrow B$ in $\NNum^\dagger(\overline{k})_F$. The proof of this fact is now divided into a finite and an infinite case. Let us assume first that the field extension $\overline{k}/k$ is finite. Note that since $\overline{k}$ is algebraically closed and by hypothesis $k$ is of characteristic zero, the field extension $\overline{k}/k$ is separable. 
\begin{proposition}\label{prop:monadic}
The adjunction of Theorem~\ref{thm:basechange2} (with $k'=\overline{k}$)
\begin{equation}\label{eq:adjunction2}
\xymatrix{
\NNum(\overline{k})_F \ar@<1ex>[d]^-{(-)_k}  \\
\NNum(k)_F \ar@<1ex>[u]^-{-\otimes_k\overline{k}}\,
}
\end{equation}
is monadic (see \cite[\S VI]{MacLane}). Moreover, we have a natural isomorphism of functors $-\otimes_k \overline{k}_k \simeq (-\otimes_k \overline{k})_k$, where $\overline{k}_k$ denotes the $k$-algebra $\overline{k}$.
\end{proposition}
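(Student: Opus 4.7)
The plan is to handle the two assertions in turn, deducing monadicity from Beck's theorem \cite[\S VI]{MacLane} while the natural isomorphism provides an explicit description of the resulting monad.

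First I would establish the natural isomorphism $-\otimes_k\overline{k}_k \simeq (-\otimes_k\overline{k})_k$. For a saturated dg $k$-algebra $A$, the dg $k$-algebras $(A\otimes_k\overline{k})_k$ and $A\otimes_k\overline{k}_k$ are literally the same object: both have underlying complex $A\otimes_k\overline{k}$ with the same $k$-linear multiplication, the only distinction being whether one remembers the auxiliary $\overline{k}$-action coming from the right-hand tensor factor. This identification is natural in $A$, compatible with bimodules, and extends through the constructions of \S\ref{sub:Chow} and \S\ref{sub:numerical} to the claimed natural isomorphism of $F$-linear $\otimes$-endofunctors of $\NNum(k)_F$.

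For monadicity, Beck's theorem demands two conditions on the right adjoint $R:=(-)_k$: conservativity, and preservation of coequalizers of $R$-split parallel pairs. Since $k$ is of characteristic zero and $F$ is an extension of $k$, both $\NNum(k)_F$ and $\NNum(\overline{k})_F$ are abelian semi-simple by the results recalled in the introduction, so every parallel pair in the source admits a split coequalizer, and such coequalizers are automatically preserved by any additive functor; this dispatches the second condition. For conservativity I would invoke the general fact that a faithful additive functor between abelian categories is conservative, since a morphism whose image is an isomorphism must be both mono and epi (by faithfulness-chasing of the usual kernel/cokernel obstructions) and mono+epi=iso in any abelian category.

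The bulk of the work is therefore faithfulness of $(-)_k$. For saturated dg $\overline{k}$-algebras $A,B$, the decomposition of Lemma~\ref{lemma:key} realises the restriction map
\[
K_0(A^\op\otimes_{\overline{k}}B)_F \too K_0(A_k^\op\otimes_k B_k)_F
\]
as an inclusion into a direct summand, via the equivalence $\cD_c((A^\op\otimes_{\overline{k}}B)_k)\simeq \cD_c(A^\op\otimes_{\overline{k}}B)$. An argument patterned on Lemma~\ref{lem:compact} shows that the bilinear form $\chi$ on this summand of $K_0(A_k^\op\otimes_k B_k)_F$ is exactly $[\overline{k}:k]$ times the form $\chi$ on $K_0(A^\op\otimes_{\overline{k}}B)_F$, because $\dim_k\Hom = [\overline{k}:k]\cdot\dim_{\overline{k}}\Hom$ on the Hom spaces computing $\chi$. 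As $[\overline{k}:k]$ is nonzero in the characteristic-zero field $F$, the two $\chi$-kernels coincide under the summand inclusion, and the induced map on $\Hom$-groups in $\NNum$ is injective. The main obstacle is precisely this compatibility of the $\chi$-kernels with the decomposition of Lemma~\ref{lemma:key}; once it is settled, faithfulness, then conservativity, then monadicity all follow in sequence.
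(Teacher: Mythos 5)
Your approach is essentially the paper's, with two cosmetic reorganizations (approaching conservativity via faithfulness, and stating the natural isomorphism as a literal identification of dg $k$-algebras rather than deriving it from the adjunction unit/counit), and your treatment of the key point --- that the restriction map on $K_0$ embeds as a direct summand via Lemma~\ref{lemma:key}, with the bilinear form $\chi$ on that summand rescaled by the nonzero constant $[\overline{k}:k]$, so that the $\chi$-kernels match --- is the same computation the paper carries out. Two small remarks: the paper also explicitly treats objects of the form $(B,e)$ with $e$ an idempotent, which your argument handles implicitly since injectivity on $\Hom$-groups of generators passes to the idempotent completion; and your reduction of conservativity to faithfulness is legitimate (the cancellation argument you gesture at really does prove that a faithful additive functor between abelian categories reflects isomorphisms, with no exactness needed).

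There is, however, one genuinely false step in your verification of Beck's criterion. You assert that ``every parallel pair in the source admits a split coequalizer'' because $\NNum(\overline{k})_F$ is abelian semi-simple. This is not true: for the parallel pair $0,0\colon A\rightrightarrows B$ the coequalizer is $\id_B\colon B\to B$, and a split coequalizer structure in the sense of Mac~Lane would require a map $t\colon B\to A$ with $0\circ t=\id_B$, which is impossible whenever $B\neq 0$. Semi-simplicity guarantees that the coequalizer \emph{map} is a split epimorphism, but that is strictly weaker than the data of a split (absolute) coequalizer, and split epis are not preserved by arbitrary functors. The conclusion you want is nevertheless correct, for the simpler reason the paper gives: $(-)_k$ is additive between semi-simple abelian categories, hence exact, hence preserves \emph{all} coequalizers, in particular those of $(-)_k$-split pairs; combined with conservativity this satisfies Beck's sufficient criterion. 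You should replace the ``every coequalizer splits'' justification with this exactness argument.
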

\begin{proof}
Recall that since by hypothesis $k$, $\overline{k}$, and $F$ have the same characteristic the categories $\NNum(k)_F$ and $\NNum(\overline{k})_F$ are not only abelian but moreover semi-simple. As a consequence the additive functor $(-)_k$ is in fact exact, \ie it preserves all kernels and cokernels. We now show that it is moreover conservative, \ie that it reflects all isomorphisms. These two properties imply in particular that $(-)_k$ reflects all cokernels and so Beck's monadic conditions (see \cite[page 151]{MacLane}) are satisfied. Let $M \in \NNum(\overline{k})_F$ be a noncommutative numerical motive. Since $(-)_k$ is exact, in order to prove that it is moreover conservative, it suffices to show that $M\simeq 0$ whenever $M_k\simeq 0$. Let us assume first that $M$ is a saturated dg $\overline{k}$-algebra $B$. If by hypothesis the $F$-vector spaces
\begin{equation}\label{eq:Hom2}
\Hom_{\NNum(k)_F}(A_k,B_k) = K_0(A_k^\op \otimes_k B_k)_F/\mathrm{Ker}(\chi_{A_k,B_k}) 
\end{equation}
are trivial for every saturated dg $\overline{k}$-algebra $A$, one needs to show that the $F$-vector spaces 
\begin{equation}\label{eq:Hom1}
\Hom_{\NNum(\overline{k})_F}(A,B) = K_0(A^\op \otimes_{\overline{k}}B)_F/\mathrm{Ker}(\chi_{A,B})
\end{equation}
are also trivial. Lemma \ref{lemma:key} (with $k'=\overline{k}$) furnish us a direct sum decomposition $A_k^\op \otimes_k B_k \simeq (A^\op \otimes_{\overline{k}}B)_k \oplus D$. As a consequence, \eqref{eq:Hom2} identifies with
$$ \left(K_0(A^\op\otimes_{\overline{k}} B)_F \oplus K_0(D)_F \right)/\mathrm{Ker}(\chi_{A,B}+\chi_D)\,, $$
where $\chi_{A,B} + \chi_D$ denotes the sum of the two bilinear forms. Moreover, the homomorphism from \eqref{eq:Hom1} to \eqref{eq:Hom2} induced by $(-)_k$ corresponds to the inclusion. Hence, since by hypothesis the $F$-vector spaces \eqref{eq:Hom2} are trivial one concludes that \eqref{eq:Hom1} is also trivial. Let us now assume that $M$ is of the form $(B,e)$, with $B$ a saturated dg $\overline{k}$-algebra and $e$ an idempotent of the $F$-algebra $K_0(B^\op \otimes_{\overline{k}}B)_F/\mathrm{Ker}(\chi)$. If by hypothesis $(B,e)_k \simeq 0$, then $e_k=0$ in the $F$-algebra $K_0(B_k^\op \otimes_k B_k)_F/\mathrm{Ker}(\chi)$. As proved above the homomorphism
$$ K_0(B^\op \otimes_{\overline{k}}B)_F/\mathrm{Ker}(\chi) \too \K_0(B_k^\op \otimes_k B_k)_F/\mathrm{Ker}(\chi)$$
is injective and so one concludes that $e$, and hence $(B,e)$, is also trivial.

In what concerns the isomorphism of functors, note that we have a natural transformation $\Id \otimes\epsilon_{\overline{k}}: -\otimes_k \overline{k}_k \otimes_k \overline{k} \Rightarrow -\otimes_k \overline{k}$, where $\epsilon$ denotes the counit of the adjunction. Via the above adjunction \eqref{eq:adjunction2} it corresponds to a natural transformation $-\otimes_k \overline{k}_k \Rightarrow (-\otimes_k\overline{k})_k$. This latter one is an isomorphism since the functors $-\otimes_k \overline{k}$ and $(-)_k$ do not alter the set of objects and the same isomorphism holds for complexes of vector spaces.
\end{proof}

Returning to the proof of the injectivity of $I$, take for $A$ the saturated dg $\overline{k}$-algebra $B_k$ and for $f$ the counit $\epsilon_B: B_k \otimes_k \overline{k} \to B$ of the above adjunction \eqref{eq:adjunction2}. This adjunction is monadic and so $\epsilon_B$ is in fact surjective. Note that the categories $\NNum(\overline{k})_F$ and $\NNum^\dagger(\overline{k})_F$ are the same (except in what concerns the symmetry isomorphism constraints) and so $f$ is also surjective in $\NNum^\dagger(\overline{k})_F$.

Let us now assume that the field extension $\overline{k}/k$ is infinite. Consider the filtrant diagram $\{k_i\}_{i \in I}$ of intermediate field extensions $\overline{k}/k_i/k$ which are finite over $k$. Since $\overline{k}$ identifies with $\mathrm{colim}_{i\in I} k_i$ there exists by \cite[Thm.~1.1]{Toen} a finite field extension $k_0/k$ and a saturated dg $k_0$-algebra $B_0$ such that $B_0 \otimes_{k_0}\overline{k}\simeq B$. Moreover, since by hypothesis $k$ is of characteristic zero the field extensions $\overline{k}/k_0$ and $\overline{k}/k$ are separable and so one concludes that $k_0/k$ is also a separable field extension. Hence, using the same argument as above (with $\overline{k}$ replaced by $k_0$), one obtains a surjective morphism $f: (B_0)_k \otimes_k k_0 \twoheadrightarrow B_0$ in $\NNum^\dagger(k_0)_F$. Since the category $\NNum^\dagger(k_0)_F$ is abelian semi-simple, Lemma~\ref{lem:section} implies that $f$ admits a section. As a consequence, the image of $f$ under the base-change functor $-\otimes_{k_0}\overline{k}: \NNum^\dagger(k_0)_F \to \NNum^\dagger(\overline{k})_F$ admits also a section and hence is surjective. In sum, we have a surjective morphism $(B_0)_k \otimes_k \overline{k} \twoheadrightarrow B_0\otimes_{k_0}\overline{k} \simeq B$ in $\NNum^\dagger(\overline{k})_F$. This concludes the proof of the injectivity of $I$.

\subsection*{Inclusion $\mathrm{Ker}(P)\subseteq \mathrm{Im}(I)$}
Recall first that since by hypothesis $F$ is a field extension of $\overline{k}$ we have a well-defined commutative diagram
$$
\xymatrix{
\NNum^\dagger(k)_F \ar[dr]_-{\overline{HP_\ast}} \ar[r]^-{-\otimes_k\overline{k}} & \NNum^\dagger(\overline{k})_F \ar[d]^-{\overline{HP_\ast}} \\ 
& \mathrm{Vect}(F)\,.
}
$$
Let $\varphi$ be an element of the noncommutative motivic Galois group $\mathrm{Gal}(\NNum^\dagger(k)_F)$ such that $\varphi_L=\id_{\overline{HP_\ast}(L)}$ for every noncommutative Artin motive $L \in \NAM(k)_F$. One needs to construct an element $\overline{\varphi}$ of $\mathrm{Gal}(\NNum^\dagger(\overline{k})_F)$, \ie a $\otimes$-automorphism of the fiber functor $\overline{HP_\ast}: \NNum^\dagger(\overline{k})_F \to \mathrm{Vect}(F)$, such that $\overline{\varphi}_{M \otimes_k\overline{k}}=\varphi_M$ for every noncommutative numerical motive $M \in \NNum^\dagger(k)_F$. The proof is divided into a finite and an infinite case.  Let us assume first that the field extension $\overline{k}/k$ is finite. Recall from Proposition~\ref{prop:monadic} that we have a monadic adjunction
\begin{equation}\label{eq:adjunction3}
\xymatrix{
\NNum(\overline{k})_F \ar@<1ex>[d]^-{(-)_k}  \\
\NNum(k)_F \ar@<1ex>[u]^-{-\otimes_k\overline{k}}\,.
}
\end{equation}
Let us denote by $\eta: \Id \Rightarrow (-\otimes_k\overline{k})_k$ and $\epsilon: (-)_k \otimes_k \overline{k} \Rightarrow \Id$ the unit and counit of the adjunction \eqref{eq:adjunction3}. Since this adjunction is monadic every noncommutative numerical motive $N \in \NNum(\overline{k})_F$ admits the following canonical presentation
\begin{equation}\label{eq:cok}
\mathrm{cokernel}\left(\epsilon_{N_k\otimes_k \overline{k}} -(\epsilon_N)_k \otimes_k \overline{k}: (N_k \otimes_k \overline{k})_k \otimes_k \overline{k} \too N_k\otimes_k \overline{k} \right) \simeq N\,;
\end{equation}
see \cite[\S7]{MacLane}. The same holds in $\NNum^\dagger(\overline{k})_F$ since this category only differs from $\NNum(\overline{k})_F$ in what concerns the symmetry isomorphism constraints. 
\begin{lemma}\label{lem:auxiliar}
The following square commutes
\begin{equation}\label{eq:diag1}
\xymatrix{
\overline{HP_\ast}((N_k \otimes_k \overline{k})_k \otimes_k \overline{k}) \ar[d]^-\simeq_{\varphi_{(N_k \otimes_k \overline{k})_k}} \ar[rrrrr]^-{\overline{HP_\ast}(\epsilon_{N_k \otimes_k \overline{k}}) - \overline{HP_\ast}((\epsilon_N)_k \otimes_k \overline{k})} &&&&&\overline{HP_\ast}(N_k \otimes_k \overline{k}) \ar[d]_\simeq^{\varphi_{N_k}} \\
\overline{HP_\ast}((N_k \otimes_k \overline{k})_k \otimes_k \overline{k}) \ar[rrrrr]_-{\overline{HP_\ast}(\epsilon_{N_k \otimes_k \overline{k}}) - \overline{HP_\ast}((\epsilon_N)_k \otimes_k \overline{k})} &&&&& \overline{HP_\ast}(N_k \otimes_k \overline{k})\,.
}
\end{equation}
\end{lemma}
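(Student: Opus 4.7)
The plan is to establish commutativity of \eqref{eq:diag1} by verifying it separately for each of the two horizontal morphisms $\epsilon_{N_k\otimes_k\overline{k}}$ and $(\epsilon_N)_k\otimes_k\overline{k}$, and then subtracting.

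The easier half concerns $(\epsilon_N)_k\otimes_k\overline{k}$. By construction this is the image under the base-change functor $-\otimes_k\overline{k}$ of the morphism $(\epsilon_N)_k\colon(N_k\otimes_k\overline{k})_k\to N_k$, which lives in $\NNum^\dagger(k)_F$. Naturality of $\varphi$ applied to $(\epsilon_N)_k$ therefore provides the corresponding commutative square computed through $\NNum^\dagger(k)_F$. Transporting this square along the canonical identification $\overline{HP_\ast}(M)=\overline{HP_\ast}(M\otimes_k\overline{k})$ furnished by the right-hand diagram of Proposition~\ref{prop:HP} (valid because $F$ is a field extension of $\overline{k}$, hence of $k$) yields the sought sub-square over $\NNum^\dagger(\overline{k})_F$.

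The main work is the square with horizontal arrow $\epsilon_{N_k\otimes_k\overline{k}}$. Here I will combine the natural isomorphism $(-\otimes_k\overline{k})_k\simeq -\otimes_k\overline{k}_k$ of Proposition~\ref{prop:monadic} with the strong monoidality of base change to identify
\[
(N_k\otimes_k\overline{k})_k\otimes_k\overline{k}\ \simeq\ (N_k\otimes_k\overline{k})\otimes_{\overline{k}}(\overline{k}_k\otimes_k\overline{k})
\]
in $\NNum^\dagger(\overline{k})_F$, and to show that under this identification $\epsilon_{N_k\otimes_k\overline{k}}$ corresponds to $\id_{N_k\otimes_k\overline{k}}\otimes_{\overline{k}}\epsilon_{\overline{k}}$, with $\epsilon_{\overline{k}}\colon\overline{k}_k\otimes_k\overline{k}\to\overline{k}$ the counit at the tensor unit. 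This structural fact, the decomposition of the counit of a monoidal adjunction at an object of the form $L(X)$, is the point of the argument where I expect the most care will be needed: it unwinds to the multiplication map $\overline{k}_k\otimes_k\overline{k}\to\overline{k}$, but the coherence bookkeeping with the monoidal structures on $-\otimes_k\overline{k}$ and its right adjoint $(-)_k$ is the delicate part.

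Granting the identification, the rest is formal. Applying the monoidal fiber functor $\overline{HP_\ast}$ and Proposition~\ref{prop:HP} on each tensor factor converts the horizontal arrow into $\id_{\overline{HP_\ast}(N_k)}\otimes_F\overline{HP_\ast}(\epsilon_{\overline{k}})$, while monoidality of $\varphi$ gives $\varphi_{(N_k\otimes_k\overline{k})_k}\simeq\varphi_{N_k\otimes_k\overline{k}_k}=\varphi_{N_k}\otimes\varphi_{\overline{k}_k}$. The crucial input now enters: because $\overline{k}/k$ is finite (and automatically separable since $\mathrm{char}(k)=0$), $\mathrm{spec}(\overline{k})$ is a finite {\'e}tale $k$-scheme, so $\overline{k}_k=\cU(\cD_\perf^\dg(\mathrm{spec}(\overline{k})))$ is a noncommutative Artin motive and the standing hypothesis on $\varphi$ forces $\varphi_{\overline{k}_k}=\id$. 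The second square thus reduces to the tautological commutativity of $\varphi_{N_k}\otimes\id$ with $\id\otimes\overline{HP_\ast}(\epsilon_{\overline{k}})$. Subtracting the two commutative sub-squares yields \eqref{eq:diag1}.
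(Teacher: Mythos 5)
Your proposal is correct and follows the paper's own proof essentially step for step: one disposes of the $(\epsilon_N)_k\otimes_k\overline{k}$ part by naturality of $\varphi$ (transported through the identification $\overline{HP_\ast}(M)\simeq\overline{HP_\ast}(M\otimes_k\overline{k})$), then handles the $\epsilon_{N_k\otimes_k\overline{k}}$ part by rewriting it as $\id\otimes\epsilon_{\overline{k}}$ via $(-\otimes_k\overline{k})_k\simeq-\otimes_k\overline{k}_k$ and monoidality, and finally invokes $\varphi_{\overline{k}_k}=\id$ because $\overline{k}_k$ is a noncommutative Artin motive. You even correctly flag the coherence bookkeeping in the monoidal-adjunction step and use $\otimes_{\overline{k}}$ where the paper has a small notational slip.
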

\begin{proof}
Note first that by naturally of $\varphi$ we have
$$ \varphi_{N_k} \circ \overline{HP_\ast}((\epsilon_N)_k \otimes_k \overline{k}) = \overline{HP_\ast}((\epsilon_N)_k \otimes_k \overline{k}) \circ \varphi_{(N_k \otimes_k \overline{k})_k}\,.$$
Hence, the commutativity of \eqref{eq:diag1} follows from the commutativity of the square
\begin{equation}\label{eq:diag3}
\xymatrix{
\overline{HP_\ast}((N_k \otimes_k \overline{k})_k \otimes_k \overline{k}) \ar[d]^-\simeq_{\varphi_{(N_k \otimes_k \overline{k})_k}} \ar[rrr]^-{\overline{HP_\ast}(\epsilon_{N_k \otimes_k \overline{k}})} &&&\overline{HP_\ast}(N_k \otimes_k \overline{k}) \ar[d]_-\simeq^-{\varphi_{N_k}} \\
\overline{HP_\ast}((N_k \otimes_k \overline{k})_k \otimes_k \overline{k}) \ar[rrr]_-{\overline{HP_\ast}(\epsilon_{N_k \otimes_k \overline{k}})} &&&\overline{HP_\ast}(N_k \otimes_k \overline{k})\,.
}
\end{equation}
Let us now prove that \eqref{eq:diag3} is commutative. The following natural isomorphisms hold:
\begin{equation*}
(N_k \otimes_k \overline{k})_k \otimes_k \overline{k} \simeq (N_k \otimes_k \overline{k}_k) \otimes_k \overline{k} \simeq (N_k \otimes_k\overline{k})\otimes_k(\overline{k}_k \otimes_k \overline{k})\,.
\end{equation*}
The first one follows from the isomorphism of functors $(-\otimes_k \overline{k})_k \simeq -\otimes_k \overline{k}_k$ of Proposition~\ref{prop:monadic} and the second one from the fact that the functor $-\otimes_k \overline{k}$ is symmetric monoidal. Under these isomorphisms, $\overline{HP_\ast}(\epsilon_{N_k \otimes_k \overline{k}})$ identifies with 
$$ \overline{HP_\ast}(N_k\otimes_k\overline{k}) \otimes \overline{HP_\ast}(\overline{k}_k \otimes_k \overline{k}) \stackrel{\id \otimes \overline{HP_\ast}(\epsilon_{\overline{k}})}\too \overline{HP_\ast}(N_k\otimes_k\overline{k}) \otimes \overline{HP_\ast}(\overline{k}) \simeq \overline{HP_\ast}(N_k \otimes_k \overline{k})$$
and $\varphi_{(N_k \otimes_k \overline{k})_k}$ with
$$ \overline{HP_\ast}(N_k \otimes_k \overline{k}) \otimes \overline{HP_\ast}(\overline{k}_k \otimes_k \overline{k}) \stackrel{\varphi_{N_k} \otimes \varphi_{\overline{k}_k}}{\too} \overline{HP_\ast}(N_k \otimes_k \overline{k}) \otimes \overline{HP_\ast}(\overline{k}_k \otimes_k \overline{k})\,.$$
Now, note that $\overline{k}_k$ is a finite {\'e}tale $k$-algebra and hence a noncommutative Artin motive. Since by hypothesis $\varphi_L=\id_{\overline{HP_\ast}(L)}$ for every noncommutative Artin motive we have $\varphi_{\overline{k}_k}=\id_{\overline{HP_\ast}(\overline{k}_k)}$. By combining all these facts we then conclude that the above diagram \eqref{eq:diag3} is commutative since it identifies with 
$$
\xymatrix{
\overline{HP_\ast}(N_k \otimes_k \overline{k}) \otimes \overline{HP_\ast}(\overline{k}_k \otimes_k \overline{k}) \ar[d]^-\simeq_-{\varphi_{N_k}\otimes \id} \ar[rrr]^-{\id \otimes \overline{HP_\ast}(\epsilon_{\overline{k}})} &&&  \overline{HP_\ast}(N_k \otimes_k \overline{k}) \ar[d]_-\simeq^-{\varphi_{N_k}} \\
\overline{HP_\ast}(N_k \otimes_k \overline{k}) \otimes \overline{HP_\ast}(\overline{k}_k \otimes_k \overline{k}) \ar[rrr]_-{\id \otimes \overline{HP_\ast}(\epsilon_{\overline{k}})} &&& \overline{HP_\ast}(N_k \otimes_k \overline{k})\,.
}
$$
\end{proof}
The fiber functor $\overline{HP_\ast}:\NNum^\dagger(\overline{k})_F \to \mathrm{Vect}(F)$ is exact and symmetric monoidal. Hence, by combining Lemma~\ref{lem:auxiliar} with the canonical presentation \eqref{eq:cok}, one obtains an automorphism $\overline{\varphi}_N:\overline{HP_\ast}(N) \stackrel{\sim}{\to} \overline{HP_\ast}(N)$ for every noncommutative numerical motive $N \in \NNum^\dagger(\overline{k})_F$. By construction these automorphisms are functorial on $N$ and hence give rise to a well-defined automorphism $\overline{\varphi}$ of the fiber functor $\overline{HP_\ast}$. 

Let us now show that this automorphism $\overline{\varphi}$ is tensorial. Given any two noncommutative numerical motives $N, O \in \NNum^\dagger(\overline{k})_F$ one needs to show that the following square commutes 
\begin{equation}\label{eq:compatibility}
\xymatrix{
\overline{HP_\ast}(N) \otimes \overline{HP_\ast}(O) \ar[r]^-\simeq \ar[d]^-\simeq_{\overline{\varphi}_{N}\otimes \overline{\varphi}_{O}} & \overline{HP_\ast}(N \otimes_{\overline{k}}O) \ar[d]_-\simeq^-{\overline{\varphi}_{N\otimes_{\overline{k}} O}} \\
\overline{HP_\ast}(N) \otimes \overline{HP_\ast}(O) \ar[r]_-\simeq & \overline{HP_\ast}(N\otimes_{\overline{k}}O)\,.
}
\end{equation}
In order to prove this consider the following diagram\,:
$$
\xymatrix@C=.7em@R=3em{
\overline{HP_\ast}(N_k \otimes_k \overline{k}) \otimes \overline{HP_\ast}(O_k \otimes_k \overline{k}) \ar[rrr]^-\simeq \ar[d]^-{\overline{HP_\ast}(\epsilon_{N_k}) \otimes \overline{HP_\ast}(\epsilon_{O_k})} \ar@/_5pc/[ddd]^-\simeq_-{\varphi_{N_k} \otimes\varphi_{O_k}} &&& \overline{HP_\ast}((N_k\otimes_kO_k)\otimes_k \overline{k}) \ar[d]_-{\overline{HP_\ast}(\epsilon_{N_k} \otimes \epsilon_{O_k})}  \ar@/^5pc/[ddd]_\simeq^-{\varphi_{N_k \otimes _k O_k}}\\
 \overline{HP_\ast}(N) \otimes\overline{HP_\ast}(O) \ar[d]^-\simeq_-{\overline{\varphi}_N \otimes \overline{\varphi}_O} \ar[rrr]^-\simeq &&& \overline{HP_\ast}(N\otimes_{\overline{k}}O) \ar[d]_\simeq^-{\overline{\varphi}_{N\otimes_{\overline{k}}O}} \\
  \overline{HP_\ast}(N) \otimes \overline{HP_\ast}(O) \ar[rrr]_-\simeq &&& \overline{HP_\ast}(N\otimes_{\overline{k}}O) \\
 \overline{HP_\ast}(N_k \otimes_k \overline{k}) \otimes \overline{HP_\ast}(O_k \otimes_k \overline{k}) \ar[u]_-{\overline{HP_\ast}(\epsilon_{N_k}) \otimes \overline{HP_\ast}(\epsilon_{O_k})} \ar[rrr]_-\simeq &&&  \overline{HP_\ast}((N_k\otimes_kO_k)\otimes_k \overline{k}) \ar[u]^-{\overline{HP_\ast}(\epsilon_{N_k} \otimes \epsilon_{O_k})} \,.
}
$$
All the six squares, except the middle one (\ie \eqref{eq:compatibility}), are commutative. This follows from the hypothesis that $\varphi$ is tensorial, from the naturalness of $\overline{\varphi}$, and from the fact that the fiber functor $\overline{HP_\ast}$ is symmetric monoidal. As a consequence, the two composed morphisms of \eqref{eq:compatibility} (from $\overline{HP_\ast}(N) \otimes \overline{HP_\ast}(O)$ to $\overline{HP_\ast}(N\otimes_{\overline{k}}O)$) agree when pre-composed with $\overline{HP_\ast}(\epsilon_{N_k}) \otimes \overline{HP_\ast}(\epsilon_{O_k})$. Both counit morphisms $\epsilon_{N_k}$ and $\epsilon_{O_k}$ are surjective and hence by Lemma~\ref{lem:section} admit sections. This implies that $\overline{HP_\ast}(\epsilon_{N_k})\otimes \overline{HP_\ast}(\epsilon_{O_k})$ is also surjective and so one concludes that the above square \eqref{eq:compatibility} commutes.

Let us now show that $\overline{\varphi}_{M \otimes_k \overline{k}}=\varphi_M$ for every noncommutative numerical motive $M \in \NNum^\dagger(k)_F$. By construction of $\overline{\varphi}_{M\otimes_k \overline{k}}$ it suffices to show that the following square
\begin{equation}\label{eq:diagram-new}
\xymatrix{
\overline{HP_\ast}((M\otimes_k \overline{k})_k \otimes_k \overline{k}) \ar[d]^-\simeq_-{\varphi_{(M \otimes_k \overline{k})_k}} \ar[rr]^-{\overline{HP_\ast}(\epsilon_{M \otimes_k \overline{k}})} && \overline{HP_\ast}(M \otimes_k \overline{k}) \ar[d]_-\simeq^-{\varphi_M} \\
\overline{HP_\ast}((M\otimes_k \overline{k})_k \otimes_k \overline{k}) \ar[rr]_-{\overline{HP_\ast}(\epsilon_{M \otimes_k \overline{k}})} && \overline{HP_\ast}(M \otimes_k \overline{k})
}
\end{equation} 
commutes. As in the proof of Lemma~\ref{lem:auxiliar} this is indeed the case since \eqref{eq:diagram-new} identifies with the following commutative square
$$
\xymatrix{
\overline{HP_\ast}(M \otimes_k \overline{k}) \otimes \overline{HP_\ast}(\overline{k}_k \otimes_k \overline{k}) \ar[rr]^-{\id \otimes \overline{HP_\ast}(\epsilon_{\overline{k}})} \ar[d]^-\simeq_-{\varphi_M \otimes \id} && \overline{HP_\ast}(M \otimes_k \overline{k}) \ar[d]_-\simeq^-{\varphi_M} \\
 \overline{HP_\ast}(M \otimes_k \overline{k}) \otimes \overline{HP_\ast}(\overline{k}_k \otimes_k \overline{k}) \ar[rr]_-{\id \otimes \overline{HP_\ast}(\epsilon_{\overline{k}})} &&  \overline{HP_\ast}(M \otimes_k \overline{k})\,.
}
$$
Let us now assume that the field extension $\overline{k}/k$ is infinite. Consider the filtered diagram $\{k_i\}_{i \in I}$ of intermediate field extensions $\overline{k}/k_i/k$ which are finite over $k$. By applying the usual strictification procedure one obtains a filtrant diagram of categories $\{\NNum^\dagger(k_i)_F\}_{i\in I}$. The base-change $\otimes$-functors $-\otimes_{k_i}\overline{k}: \NNum^\dagger(k_i)_F \to \NNum^\dagger(\overline{k})_F, i \in I$, give rise to a well-defined $\otimes$-functor
$$ \Lambda: \mathrm{colim}_{i \in I} \NNum^\dagger(k_i)_F \too \NNum^\dagger(\overline{k})_F\,.$$
\begin{proposition}\label{prop:colim}
The functor $\Lambda$ is an equivalence.
\end{proposition}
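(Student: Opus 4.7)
The plan is to establish essential surjectivity and full faithfulness separately, leveraging the analogous statement for derived categories already proved in Proposition~\ref{prop:key}. Since passing from $\NNum(-)_F$ to $\NNum^\dagger(-)_F$ only modifies the symmetry isomorphism constraints and not the underlying $F$-linear category, it suffices to work with $\NNum(-)_F$; the compatibility with the Künneth projectors is preserved by base-change by construction of the $\dagger$-structure.

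For essential surjectivity, I would observe that every object of $\NNum^\dagger(\overline{k})_F$ is a direct factor of (the image of) a saturated dg $\overline{k}$-algebra $B$. By Toën's descent theorem \cite[Thm.~1.1]{Toen}, such a $B$ is derived Morita equivalent to $B_i \otimes_{k_i} \overline{k}$ for some $i \in I$ and some saturated dg $k_i$-algebra $B_i$. Hence $B$ lies in the essential image of $\Lambda$; since filtered colimits of idempotent complete $F$-linear categories are again idempotent complete (the colimit is computed on morphisms while idempotent splittings are preserved) and direct factors are preserved by each base-change $-\otimes_{k_i}\overline{k}$, essential surjectivity follows.

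For full faithfulness, given two objects of the colimit, I would choose a single $i \in I$ and saturated dg $k_i$-algebras $A_i, B_i$ representing them, and compute
\begin{equation*}
\mathrm{colim}_{j \in i\downarrow I} \Hom_{\NNum(k_j)_F}(A_i\otimes_{k_i}k_j, B_i \otimes_{k_i}k_j) = \mathrm{colim}_{j\in i\downarrow I}\bigl(K_0(A_j^\op \otimes_{k_j} B_j)_F/\mathrm{Ker}(\chi)\bigr),
\end{equation*}
where $A_j := A_i\otimes_{k_i}k_j$ and $B_j:=B_i\otimes_{k_i}k_j$. Applying Proposition~\ref{prop:key} to the saturated dg $k_i$-algebra $C := A_i^\op \otimes_{k_i} B_i$ (after passing from the intermediate diagram $\overline{k}/k_j/k_i$ to the original setup), one obtains an equivalence $\mathrm{colim}_j \cD_c(C\otimes_{k_i}k_j)\simeq \cD_c(C\otimes_{k_i}\overline{k})$, which yields an isomorphism of $F$-vector spaces $\mathrm{colim}_j K_0(A_j^\op \otimes_{k_j}B_j)_F \simeq K_0(A^\op \otimes_{\overline{k}}B)_F$ with $A := A_i\otimes_{k_i}\overline{k}$, $B := B_i\otimes_{k_i}\overline{k}$. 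The last sentence of Proposition~\ref{prop:key} (the equality $\chi([X_j],[Y_j])=\chi([X],[Y])$) shows that the kernel of the bilinear form $\chi$ is matched under this isomorphism, so the quotients by $\mathrm{Ker}(\chi)$ are also identified. This gives the desired bijection on Hom-sets.

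The main obstacle, and the reason one cannot immediately quote Proposition~\ref{prop:key}, is precisely this last compatibility: morphisms in $\NNum(-)_F$ are $K_0$-classes modulo numerical equivalence, and a priori a class in $K_0(A^\op\otimes_{\overline{k}}B)_F$ that pairs trivially with all $[Y]\in\cD_c(A^\op\otimes_{\overline{k}}B)$ need not descend to a class in some $K_0(A_j^\op\otimes_{k_j}B_j)_F$ pairing trivially with all $[Y_j]\in \cD_c(A_j^\op\otimes_{k_j}B_j)$. The equality of $\chi$-values from Proposition~\ref{prop:key}, combined with the essential surjectivity of the functor $\Gamma$ therein (so every $[Y]$ comes from some $[Y_j]$), is exactly what is needed to resolve this and conclude full faithfulness. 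Finally, since each base-change functor $-\otimes_{k_i}\overline{k}$ is symmetric monoidal and base-change commutes with tensor products, $\Lambda$ is automatically a $\otimes$-equivalence.
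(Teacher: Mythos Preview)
Your approach is essentially the same as the paper's: Proposition~\ref{prop:key} for the $K_0$-level isomorphism, the preservation of $\chi$-values from that proposition to match the numerical-equivalence kernels, and To\"en's result to descend saturated dg algebras. The full-faithfulness argument is correct and matches the paper's almost verbatim.

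There is, however, a gap in the order in which you run the two halves. Your essential-surjectivity argument descends $B$ via To\"en and then invokes ``filtered colimits of idempotent complete categories are idempotent complete'' together with ``direct factors are preserved by base-change''. Neither of these facts lets you descend the idempotent $e$ on $B$: idempotent completeness of the colimit only splits idempotents that already live in the colimit, and preservation of direct factors goes the wrong way (forward, not backward). To show that an idempotent $e\in\End_{\NNum^\dagger(\overline{k})_F}(B)$ lifts to an idempotent in the colimit you need $\Lambda$ to be full on $\End(B_i)$, and you need the lift to remain idempotent, which uses that the transition maps are ring homomorphisms. This is exactly what the paper does: it proves full faithfulness \emph{first}, and then uses the resulting isomorphism $\mathrm{colim}_i\,\End_{\NNum^\dagger(k_i)_F}(B_0\otimes_{k_0}k_i)\simeq\End_{\NNum^\dagger(\overline{k})_F}(B)$ to find a finite level $k_l$ and an idempotent $e_l$ with $e_l\otimes_{k_l}\overline{k}=e$. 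Reordering your two paragraphs and making the descent of $e$ explicit via full faithfulness fixes the argument completely.
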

\begin{proof}
We start by proving that $\Lambda$ is fully-faithful. Let $M$ and $N$ be two objects of $\mathrm{colim}_{i \in I} \NNum^\dagger(k_i)_F$. There exists then an element $j \in I$ and objects $M_j, N_j \in \NNum^\dagger(k_j)_F$ such that $M_j \otimes_{k_j}\overline{k}=M$ and $N_j \otimes_{k_j}\overline{k}=N$. By construction of the category of noncommutative numerical motives it suffices to consider the case where $M_j$ and $N_j$ are saturated dg $k_j$-algebras. We have the following equalities
\begin{eqnarray*}
\Hom_{\mathrm{colim}_{i \in I}\NNum^\dagger(k_i)_F}(M,N) & = & \mathrm{colim}_{j \downarrow I} \Hom_{\NNum^\dagger(k_i)_F}(M_j \otimes_{k_j}k_i,N_j \otimes_{k_j}k_i)\\
& = & \mathrm{colim}_{j \downarrow I} \left(K_0((M_j^\op \otimes_{k_i} N_j)\otimes_{k_j}k_i)_F / \mathrm{Ker}(\chi)\right)\,,
\end{eqnarray*}
where $j \downarrow I$ denotes the category of objects under $j$. On the other hand we have
\begin{eqnarray*}
\Hom_{\NNum^\dagger(\overline{k})_F}(\Lambda(M),\Lambda(N)) & = & \Hom_{\NNum^\dagger(\overline{k})_F}(M_j \otimes_{k_j} \overline{k}, N_j \otimes_{k_j} \overline{k})\\
& = & K_0((M_j^\op \otimes_{k_j}N_j)\otimes_{k_j} \overline{k})_F/\mathrm{Ker}(\chi)\,.
\end{eqnarray*}
Proposition~\ref{prop:key} (applied to $k=k_j$, $k'=\overline{k}$, $C=M_j^\op \otimes_{k_j} N_j$, and $I=j \downarrow I$) furnish us an equivalence of categories
$$ \mathrm{colim}_{j \downarrow I}\cD_c((M_j^\op \otimes_{k_j}N_j)\otimes_{k_j}k_i) \stackrel{\sim}{\too} \cD_c((M_j^\op \otimes_{k_j} N_j)\otimes_{k_j}\overline{k})\,.$$
Since the Grothendieck group functor preserves filtered colimits the upper horizontal map in the following commutative diagram is an isomorphism
$$ 
\xymatrix@C=1em@R=2em{
\mathrm{colim}_{j \downarrow I}K_0((M_j^\op \otimes_{k_j}N_j)\otimes_{k_j}k_i)_F \ar[d] \ar[r]^-\simeq & K_0((M_j^\op \otimes_{k_j}N_j)\otimes_{k_j}\overline{k})_F \ar@{->>}[d] \\
\mathrm{colim}_{j \downarrow I}\left(K_0((M_j^\op \otimes_{k_j}N_j)\otimes_{k_j}k_i)_F/\mathrm{Ker}(\chi)\right)  \ar[r] & K_0((M_j^\op \otimes_{k_j}N_j)\otimes_{k_j}\overline{k})_F/\mathrm{Ker}(\chi) \,.
}
$$
The right vertical map is surjective and so we conclude that the lower horizontal map is also surjective. Since this latter map identifies with the one from  $\Hom_{\mathrm{colim}_{i \in I}\NNum^\dagger(k_i)_F}(M,N)$ to $\Hom_{\NNum^\dagger(\overline{k})_F}(\Lambda(M),\Lambda(N))$ induced by $\Lambda$ it remains to show that it is moreover injective. Let $X_i$ be an object of $\cD_c((M_j^\op \otimes_{k_j}N_j)\otimes_{k_j}k_i)$ such that $\chi([X_i \otimes_{k_i} \overline{k}],[Y])=0$ for every $Y \in \cD_c((M_j^\op \otimes_{k_j}N_j)\otimes_{k_j}\overline{k})$. Given any object $W \in \cD_c((M^\op_j\otimes_{k_j} N_j)\otimes_{k_j}k_i)$ we have by Proposition~\ref{prop:key} the equality $\chi([X_i],[W]) =\chi([X_i \otimes_{k_i}\overline{k}],[W\otimes_{k_i}\overline{k}])$. As a consequence $[X_i]$ becomes trivial in $\mathrm{colim}_{j \downarrow I}\left(K_0((M_j^\op \otimes_{k_j}N_j)\otimes_{k_j}k_i)_F/\mathrm{Ker}(\chi)\right)$ and so we conclude that the lower horizontal map is injective.

We now prove that $\Lambda$ is essentially surjective. Let $N$ be an object of $\NNum^\dagger(\overline{k})_F$. By definition $N$ is of the form $(B,e)$, with $B$ a saturated dg $\overline{k}$-algebra and $e$ an idempotent element of the $F$-algebra $K_0(B^\op \otimes_{\overline{k}}B)_F/\mathrm{Ker}(\chi)$. As explained above, there exists a finite field extension $k_0/k$ and a saturated dg $k_0$-algebra $B_0$ such that $B_0 \otimes_{k_0}\overline{k}\simeq B$. The functor $\Lambda$ is fully-faithful and so we have an isomorphism
$$ \mathrm{colim}_{j \downarrow I} \left(K_0((B_0^\op \otimes_{k_0}B_0)\otimes_{k_0}k_i)_F/\mathrm{Ker}(\chi) \right)\stackrel{\sim}{\too} K_0(B^\op \otimes_{\overline{k}}B)_F/\mathrm{Ker}(\chi)\,.$$
Now, since the maps in the filtered diagram respect the composition operation there exists a finite field extension $k_l/k_j$ and an idempotent element $e_l$ of the $F$-algebra $K_0((B_0^\op \otimes_{k_0}B)\otimes_{k_0}k_l)_F/\mathrm{Ker}(\chi)$ such that $e_l \otimes_{k_l}\overline{k} = e$. As a consequence the noncommutative numerical motive $(B_0\otimes_{k_0}k_l,e_l)$ is such that $(B_0\otimes_{k_0}k_l, e_l)\otimes_{k_l}\overline{k} \simeq (B,e)$. This shows that the functor $\Lambda$ is essentially surjective and so the proof is finished.
\end{proof}
Returning to the proof of the inclusion $\mathrm{Ker}(P)\subseteq \mathrm{Im}(I)$, recall that we have by hypothesis an element $\varphi \in \mathrm{Gal}(\NNum^\dagger(k)_F)$ such that $\varphi_L=\id_{\overline{HP_\ast}(L)}$ for every noncommutative Artin motive $L \in \NAM(k)_F$. As proved above (in the case of a finite field extension $\overline{k}/k$), the $\otimes$-automorphism $\varphi$ can be ``lifted'' to $\otimes$-automorphisms $\overline{\varphi^i} \in \mathrm{Gal}(\NNum^\dagger(k_i)_F), i \in I$, in the sense that $\overline{\varphi^i}_{M\otimes_k k_i}= \varphi_M$ for every noncommutative numerical motive $M \in \NNum^\dagger(k)_F$. Since the homomorphisms $\mathrm{Gal}(\NNum^\dagger(k_i)_F) \to \mathrm{Gal}(\NNum^\dagger(k)_F), i \in I$, are injective one concludes then that the following equalities hold:
\begin{eqnarray}\label{eq:equalities}   
\overline{\varphi^j} \circ -\otimes_{k_i}k_j = \overline{\varphi^i} & & k_i \subseteq k_j \,.
\end{eqnarray}
As a consequence the $\otimes$-automorphisms $\overline{\varphi^i}, i \in I$, assemble into a well-defined $\otimes$-automorphism $\overline{\varphi}$ of the fiber functor $\overline{HP_\ast}: \mathrm{colim}_{i \in I} \NNum^\dagger(k_i)_F \to \mathrm{Vect}(F)$. By Proposition~\ref{prop:colim}, $\mathrm{colim}_{i \in I} \NNum^\dagger(k_i)_F$ identifies with $\NNum^\dagger(\overline{k})_F$ and so we obtain a well-defined element $\overline{\varphi}$ of $\mathrm{Gal}(\NNum^\dagger(\overline{k})_F)$ such that $\overline{\varphi}_{M \otimes_k \overline{k}}=\varphi_M$ for every noncommutative numerical motive $M \in \NNum^\dagger(k)_F$. This concludes the proof of the inclusion $\mathrm{Ker}(P) \subseteq \mathrm{Im}(I)$.

\begin{remark}
When the field extension $\overline{k}/k$ is infinite one can use Proposition~\ref{prop:colim} to give an alternative proof of the injectivity of $I$: this follows from the fact that the noncommutative motivic Galois group $\mathrm{Gal}(\NNum^\dagger(\overline{k})_F)$ identifies with $\mathrm{lim}_{i \in I} \mathrm{Gal}(\NNum^\dagger(k_i)_F)$ and that all the homomorphisms in this filtered diagram are injective.
\end{remark}
\section{Proof of Theorem~\ref{thm:diagram}}
Let us assume first that the field extension $\overline{k}/k$ is finite. Consider the following compositions (see diagram \eqref{eq:diag-com-dagger})
\begin{eqnarray*}
\mathrm{Tate}(k)_F \subset \Num^\dagger(k)_F \stackrel{\Phi_\cN}{\too} \NNum^\dagger(k)_F &
\mathrm{Tate}(\overline{k})_F \subset \Num^\dagger(\overline{k})_F \stackrel{\Phi_\cN}{\too} \NNum^\dagger(\overline{k})_F\,,&
\end{eqnarray*}
where $\mathrm{Tate}(-)_F$ stands for the full subcategory of Tate motives. As proved in \cite[Thm.~1.7]{Galois}, these compositions give rise to the following exact sequences 
\begin{eqnarray}
\mathrm{Gal}(\NNum^\dagger(k)_F) \too \mathrm{Gal}(\Num^\dagger(k)_F) \too \bbG_m \too 1\label{eq:exact1} \\
\mathrm{Gal}(\NNum^\dagger(\overline{k})_F) \too \mathrm{Gal}(\Num^\dagger(\overline{k})_F) \too \bbG_m \too 1\nonumber\,.
\end{eqnarray}
Hence, it remains only to show that the three squares of \eqref{eq:diagram} are commutative. The commutativity of the one on the right-hand-side follows simply from the fact that the functor $\Num^\dagger(k)_F \stackrel{\Phi_\cN}{\to} \NNum^\dagger(k)_F$ restricts to a $F$-linear $\otimes$-equivalence $\AM(k)_F \stackrel{\sim}{\to} \NAM(k)_F$; see Theorems \ref{thm:main2} and \ref{thm:main3}. On the other hand, the commutativity of the lower left-hand-side square follows automatically from the commutativity of \eqref{eq:diag-com-dagger}. We now claim that the upper left-hand-side square of \eqref{eq:diagram} is also commutative. Recall from \cite[\S11]{Galois} that the motivic Galois groups of $\mathrm{Tate}(k)_F$ and $\mathrm{Tate}(\overline{k})_F$ (which are both $\bbG_m$) are completely determined by their evaluation at the Tate motive $\bbQ(1)$. Hence, our claim follows immediately from the fact that the base-change functor
\begin{eqnarray*}
\Num^\dagger(k)_F \too \Num^\dagger(\overline{k})_F && Z \mapsto Z_{\overline{k}}
\end{eqnarray*}
maps the Tate motive to itself. 

Let us now assume that the field extension $\overline{k}/k$ is infinite. The same arguments as above show that \eqref{eq:exact1} is exact and the right-hand-side square of \eqref{eq:diagram} is commutative. Let us now construct (via a limit procedure) the remaining of diagram \eqref{eq:diagram}. Consider the filtrant diagram $\{k_i\}_{i\in I}$ of intermediate field extensions $\overline{k}/k_i/k$ which are finite over $k$, and the associated filtrant diagram of $F$-linear $\otimes$-functors
\begin{equation}\label{eq:filtrant1}
\{\mathrm{Tate}(k_i)_F \subseteq \Num^\dagger(k_i)_F \stackrel{\Phi_\cN^i}{\too} \NNum^\dagger(k_i)_F \}_{i \in I}\,.
\end{equation}
As explained above, \eqref{eq:filtrant1} gives rise to a filtrant diagram of exact sequences
\begin{equation}\label{eq:filtrant2}
\{\mathrm{Gal}(\NNum^\dagger(k_i)_F) \too \mathrm{Gal}(\Num^\dagger(k_i)_F) \too \bbG_m \too 1 \}_{i \in I}\,.
\end{equation}
Now, recall from the proof of the inclusion $\mathrm{Ker}(P) \subseteq \mathrm{Im}(I)$ of Theorem~\ref{thm:main4} (with $\overline{k}/k$ infinite), that all the transition homomorphisms of the above diagram \eqref{eq:filtrant2} are injective. Moreover, $\mathrm{Gal}(\NNum^\dagger(\overline{k})_F)$ (resp. $\mathrm{Gal}(\Num^\dagger(\overline{k})_F)$) identifies with the limit of the filtrant diagram $\{\mathrm{Gal}(\NNum^\dagger(k_i)_F)\}_{i \in I}$ (resp. of $\{\mathrm{Gal}(\Num^\dagger(k_i)_F)\}_{i \in I}$). Hence, by passing to the limit in \eqref{eq:filtrant2}, one obtains the following commutative diagram
$$
\xymatrix@C=2.5em@R=2em{
\mathrm{Gal}(\NNum^\dagger(\overline{k})_F) \ar[d]_-I \ar[r] & \mathrm{Gal}(\Num^\dagger(\overline{k})_F)  \ar[d]_-I \ar[r]& \bbG_m \ar[r]  \ar@{=}[d] & 1\\
\mathrm{Gal}(\NNum^\dagger(k)_F) \ar[r] & \mathrm{Gal}(\Num^\dagger(k)_F) \ar[r]& \bbG_m \ar[r] & 1
\,,}
$$
where the upper row is an exact sequence. This concludes the proof.
\section{Proof of Theorem~\ref{thm:main5}}
We start with item (i). Recall from \cite[Prop.~4.1]{Weight} that the category of noncommutative Chow motives fully-embeds into the triangulated category of noncommutative mixed motives. Concretely, we have a fully-faithful $\bbQ$-linear $\otimes$-functor $\Upsilon$ making the following diagram commute
\begin{equation}\label{eq:com-triangle}
\xymatrix{
\sdgcat(k) \ar[d]_-{\cU} \ar[dr]^-{\cU_M} & \\
\NChow(k)_\bbQ \ar[r]_-{\Upsilon} & \Mix(k)_\bbQ\,.
}
\end{equation}
This allows us to consider the composed $\bbQ$-linear $\otimes$-functor
$$ \Omega: \AM(k)_\bbQ \subset \Chow(k)_\bbQ \stackrel{\Phi}{\too} \NChow(k)_\bbQ \stackrel{\Upsilon}{\too} \Mix(k)_\bbQ\,.$$
By Theorem~\ref{thm:main2} we observe that $\Omega$ is fully-faithful. Moreover, since $\NMAM(k)_\bbQ$ is by definition the smallest thick triangulated subcategory of $\Mix(k)_\bbQ$ generated by the objects $\cU_M(\cD_\perf^\dg(Z))$, with $Z$ a finite {\'e}tale $k$-scheme, one concludes from the commutative diagram \eqref{eq:com-triangle} that $\Omega$ factors through the inclusion $\NMAM(k)_\bbQ \subset \Mix(k)_\bbQ$. Now, as proved in \cite[Prop.~1.4]{Wildehaus} (see also \cite[page 217]{Voevodsky}) the category $\MAM(k)_\bbQ$ is $\otimes$-equivalence to the bounded derived category $\cD^b(\AM(k)_\bbQ)$. Moreover, since $\AM(k)_\bbQ$ is abelian semi-simple, $\cD^b(\AM(k)_\bbQ)$ identifies with the category $\mathrm{Gr}^b_\bbZ(\AM(k)_\bbQ)$ of bounded $\bbZ$-graded objects in $\AM(k)_\bbQ$. Hence, the above functor $\Omega$ admits the following natural extension:
\begin{eqnarray*}
\Psi: \MAM(k)_\bbQ \too \NMAM(k)_\bbQ \subset \Mix(k)_\bbQ && \{N_n\}_{n\in \bbZ} \mapsto \bigoplus_{n \in \bbZ} \Omega(N_n)[n]\,.
\end{eqnarray*}
By construction this extension is $\bbQ$-linear, $\otimes$-triangulated, and moreover faithful. Now, consider the following commutative diagram
$$
\xymatrix{
*+<2.5ex>{\{\textrm{finite {\'e}tale $k$-schemes}\}^\op} \ar[d]^-{M} \ar@{^{(}->}[r] \ar@/_2pc/[dd]_-{M_{gm}^\vee}& \SmProj(k)^\op \ar[d]_-{M} \ar[r]^-{\cD_\perf^\dg(-)} & \sdgcat(k) \ar[d]^-{\cU} \ar[dr]^-{\cU_M} & \\
*+<2.5ex>{\AM(k)_\bbQ} \ar[d]^-R \ar@{^{(}->}[r] & \Chow(k)_\bbQ \ar[r]_-{\Phi} & \NChow(k)_\bbQ \ar[r]^-{\Upsilon} & \Mix(k)_\bbQ \\
\MAM(k)_\bbQ \ar[rr]_-{\Psi} && *+<2.5ex>{\NMAM(k)_\bbQ} \ar@{^{(}->}[ur] & \,,
}
$$
where $R$ is the classical fully-faithful $\bbQ$-linear $\otimes$-functor relating Chow motives with mixed motives; see \cite[\S18.3-18.4]{Andre}\cite[Corollary~2]{Voevodsky2}. This functor identifies $\AM(k)_\bbQ$ with the zero-graded objects of $\mathrm{Gr}^b_\bbZ(\AM(k)_\bbQ)\simeq \MAM(k)_\bbQ$ and when pre-composed with $M$ it agrees with $M_{gm}^\vee$. The proof of item (i) now follows from the fact that \eqref{eq:diag-mixed} is the outer square of the above commutative diagram.

Let us now prove item (ii). Since $\Psi$ is triangulated, $\MAM(k)_\bbQ$ is generated by the objects $M_{gm}^\vee(Z)$, with $Z$ a finite {\'e}tale $k$-scheme, and $\NMAM(k)_\bbQ$ is generated by their images under $\Psi$, it suffices to show that the induced homomorphisms
\begin{equation}\label{eq:induced-hom}
\Hom(M_{gm}^\vee(Z_1),M_{gm}^\vee(Z_2)[-n])\too \Hom(\Psi M_{gm}^\vee(Z_1),\Psi M_{gm}^\vee(Z_2)[-n])
\end{equation}
are isomorphisms for all finite {\'e}tale $k$-schemes $Z_1$ and $Z_2$ and integers $n \in \bbN$. By construction of $\Psi$ (and Theorem~\ref{thm:main2}), \eqref{eq:induced-hom} is an isomorphism for $n=0$. As explained above, $\MAM(k)_\bbQ$ identifies with $\mathrm{Gr}^b_\bbZ(\AM(k)_\bbQ)$ and so the left hand-side of \eqref{eq:induced-hom} is trivial for $n \neq 0$. Hence, it remains to show that the right hand-side of \eqref{eq:induced-hom} is also trivial for $n \neq 0$. By combining the commutativity of the square \eqref{eq:diag-mixed} with \cite[Prop.~9.3]{CT1}, we obtain the following computation
\begin{equation}\label{eq:computation}
\Hom_{\NMAM(k)_\bbQ}(\Psi M_{gm}^\vee(Z_1), \Psi M_{gm}^\vee (Z_2)[-n]) \simeq  \left\{ \begin{array}{ll} K_n(Z_1 \times Z_2)_\bbQ &  n\geq 0 \\ 0 & n <0 \,. \end{array} \right.
\end{equation}
Now, recall that $Z_1=\mathrm{spec}(A_1)$ and $Z_2=\mathrm{spec}(A_2)$ for some finite dimensional {\'e}tale $k$-algebras $A_1$ and $A_2$. As a consequence, $K_n(Z_1\times Z_2)_\bbQ = K_n(A_1\otimes_kA_2)_\bbQ$. When $k=\bbF_q$ we conclude then that $A_1$ and $A_2$ (and hence $A_1\otimes_{\bbF_q}A_2$) have a finite number of elements. Since the algebraic $K$-theory of the $\bbF_q$-algebra $A_1\otimes_{\bbF_q}A_2$ is the same as the algebraic $K$-theory of the underlying ring, we conclude from \cite[Prop.~1.16]{Weibel-Book} that all the groups $K_n(A_1 \otimes_{\bbF_q}A_2), n \geq 1$, are finite and hence torsion groups. As a consequence, $K_n(Z_1\times Z_2)_\bbQ=0$ for $n \geq 1$ and so $\Psi$ is an equivalence. The equivalence $\NMAM(\bbF_q)_\bbQ \simeq \cD^b(\mathrm{Rep}_\bbQ(\widehat{\bbZ}))$ follows now from the $\bbQ$-linearization of the Galois-Grothendieck correspondence $\AM(\bbF_q)_\bbQ \simeq \mathrm{Rep}_\bbQ(\mathrm{Gal}(\overline{\bbF_q}/\bbF_q))$ (described in the proof of Theorem~\ref{thm:main3}) and from the standard isomorphism $\mathrm{Gal}(\overline{\bbF_q}/\bbF_q)\simeq \widehat{\bbZ}$. This concludes the proof of item (ii). In what concerns item (iii), by taking $Z_1=Z_2 =\mathrm{spec}(\bbQ)$ and $n=1$ in the above computation \eqref{eq:computation}, one obtains
\begin{equation}\label{eq:group}
K_1(\mathrm{spec}(k)\times \mathrm{spec}(k))_\bbQ \simeq K_1(\mathrm{spec}(k))_\bbQ \simeq K_1(k)_\bbQ \simeq k^\times\otimes_\bbZ \bbQ\,,
\end{equation}
where $k^\times$ denotes the multiplicative group of $k$. Since by hypothesis $\bbQ\subseteq k$, we conclude then that \eqref{eq:group} is not trivial. As a consequence, the functor $\Psi$ is {\em not} full since the left hand-side of \eqref{eq:induced-hom} is trivial for $n \neq 0$. The isomorphisms of item(iii) follow from the combination of \eqref{eq:computation} (with $k=\bbQ, Z_1=Z_2=\mathrm{spec}(\bbQ)$) with the isomorphisms $K_n(\bbQ)\simeq \bbZ$ for $n \equiv 5$ (mod $8$); see \cite[page~140]{Weibel-Handbook}. This achieves the proof.

\appendix
\section{Short exact sequences of Galois groups}\label{appendix:Galois}
In this appendix we establish a general result about short exact sequences of Galois groups. As an application we obtain a new proof of Deligne-Milne's short exact sequence \eqref{eq:shortexact}; see \S\ref{sub:new-proof}.

Let $K$ be a field, $\cA\subset \cC$ and $\cD$ three $K$-linear abelian idempotent complete rigid symmetric monoidal categories, $H:\cC \to \cD$ a $K$-linear exact $\otimes$-functor, and $\omega: \cD \to \mathrm{Vect}(K)$ a fiber functor. Out of this data one constructs the Galois groups
\begin{eqnarray*}
\mathrm{Gal}(\cD):= \underline{\mathrm{Aut}}^\otimes(\omega) & \mathrm{Gal}(\cC):= \underline{\mathrm{Aut}}^\otimes(\omega\circ H) & \mathrm{Gal}(\cA):= \underline{\mathrm{Aut}}^\otimes((\omega\circ H)_{|\cA})
\end{eqnarray*}
as well as the group homomorphisms
\begin{eqnarray*}
I:\mathrm{Gal}(\cD) \stackrel{\varphi \mapsto \varphi \circ H}{\too} \mathrm{Gal}(\cC)  &&
P:\mathrm{Gal}(\cC) \stackrel{\psi \mapsto \psi_{|\cA}}{\too} \mathrm{Gal}(\cA)\,;
\end{eqnarray*}
see \cite[\S2.3]{Andre}. Now, consider the following assumptions:
\begin{itemize}
\item[(A1)] Assume that for every object $X$ of $\cA$ there exists an integer $m$ (which depends on $X$) such that $H(X) \simeq \oplus_{i=1}^m {\bf 1}_\cD$, where ${\bf 1}_\cD$ denotes the $\otimes$-unit of $\cD$.
\item[(A2)] Assume that $\cD$ is semi-simple and that for every object $Y$ of a (fixed) dense subcategory $\cD_{\mathrm{dens}}$ of $\cD$ there exists an object $X \in \cC$ (which depends on $Y$) and a surjective morphism $H(X) \twoheadrightarrow Y$.
\item[(A3)] Assume that $\cD$ is semi-simple and that $H$ fits into a monadic adjunction
\begin{equation}\label{eq:monadic}
\xymatrix{
\cD \ar@<1ex>[d]^-G \\
\cC\ar@<1ex>[u]^-H\,.
}
\end{equation}
Assume moreover that $G({\bf 1}_\cD) \in \cA$ and that the natural transformation
$$ \id \otimes \epsilon_{{\bf 1}_\cD}: H(-)\otimes HG({\bf 1}_\cD) \Rightarrow H(-)\otimes {\bf 1}_{\cD} \simeq H(X)$$
gives rise by adjunction to an isomorphism $-\otimes G({\bf 1}_\cD) \stackrel{\sim}{\Rightarrow} (G \circ H)(-)$ of functors, where $\epsilon$ denotes the counit of \eqref{eq:monadic}.
\item[(A3')] Assume that there exists a filtrant diagram $\{H: \cC \stackrel{H_i}{\to} \cD_i \to \cD \}_{i \in I}$ such that $\mathrm{colim}_i\cD_i\simeq \cD$ and on which all the functors $H_i$ satisfy assumption (A3).
\end{itemize}
\begin{theorem}\label{thm:general}
Under the above notations the following holds:
\begin{itemize}
\item[(i)] The group homomorphism $P$ is surjective.
\item[(ii)] Assumption (A1) implies that the composition $P \circ I$ is trivial.
\item[(iii)] Assumption (A2) implies that $I$ is injective.
\item[(iv)] Assumption (A3), or more generally (A3'), implies that $\mathrm{Ker}(P) \subseteq \mathrm{Im}(I)$.
\end{itemize}
In particular, if all the above assumptions hold one obtains a well-defined short exact sequence of Galois groups
$$ 1 \too \mathrm{Gal}(\cD) \stackrel{I}{\too} \mathrm{Gal}(\cC) \stackrel{P}{\too} \mathrm{Gal}(\cA) \too 1\,.$$
\end{theorem}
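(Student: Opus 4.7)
The plan is to abstract the argument of Theorem~\ref{thm:main4}, which already establishes the four claims in the concrete setting of noncommutative numerical motives. For (i), I would invoke pure Tannakian formalism: the inclusion $\cA \hookrightarrow \cC$ of a full rigid $K$-linear $\otimes$-subcategory corresponds under Tannakian duality to a faithfully flat morphism of affine group schemes, hence $P$ is surjective (see \cite[\S2.3.3]{Andre}). For (ii), I note that any $\varphi \in \mathrm{Gal}(\cD)$ satisfies $\varphi_{{\bf 1}_\cD} = \id$ by compatibility with the unit, so by (A1) it is the identity on $H(X) \simeq \oplus_{i=1}^m {\bf 1}_\cD$ for every $X \in \cA$; thus $P(I(\varphi)) = 1$. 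For (iii), I would apply Proposition~\ref{prop:monomorphism} verbatim to the present data: it uses exactly (A2) to produce, for each $Y$ in the dense subcategory $\cD_{\mathrm{dens}}$, a surjection $H(X) \twoheadrightarrow Y$ that splits by semi-simplicity, forcing any element of $\ker(I)$ to act trivially on $Y$.

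The substantive step is (iv). Assuming (A3), monadicity of $H \dashv G$ guarantees that every $N \in \cD$ admits a canonical Beck presentation as a coequalizer
\begin{equation*}
HGHG(N) \;\rightrightarrows\; HG(N) \stackrel{\epsilon_N}{\twoheadrightarrow} N,
\end{equation*}
whose two parallel arrows are $\epsilon_{HG(N)}$ and $HG(\epsilon_N)$. Given $\varphi \in \mathrm{Ker}(P)$, I would define $\overline{\varphi}_N$ as the map on cokernels induced by $\varphi_{G(N)}$ on $HG(N) = H(G(N))$ and by $\varphi_{GHG(N)}$ on $HGHG(N) = H(GHG(N))$. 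Verifying that both relevant squares commute is the main point. The square associated to $HG(\epsilon_N)$ is immediate from naturality of $\varphi$ on the morphism $G(\epsilon_N)$ in $\cC$. The square associated to $\epsilon_{HG(N)}$ is the crux, and is where assumption (A3) pulls its full weight: using the natural isomorphism $GH(-) \simeq - \otimes G({\bf 1}_\cD)$ together with the monoidality of $H$, one identifies
\begin{equation*}
HGHG(N) \;\simeq\; HG(N) \otimes HG({\bf 1}_\cD),
\end{equation*}
under which $\epsilon_{HG(N)}$ becomes $\id_{HG(N)} \otimes \epsilon_{{\bf 1}_\cD}$. Since $G({\bf 1}_\cD) \in \cA$, the hypothesis $\varphi \in \mathrm{Ker}(P)$ forces $\varphi_{G({\bf 1}_\cD)} = \id$, and the square collapses to a trivially commuting one.

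The remaining verifications I would carry out by diagram chase, mirroring the end of \S\ref{sec:proof4}. Tensoriality of $\overline{\varphi}$: for $N, O \in \cD$ I would compare $\overline{\varphi}_{N\otimes O}$ with $\overline{\varphi}_N \otimes \overline{\varphi}_O$ after pre-composing with $\omega(\epsilon_{HG(N)}) \otimes \omega(\epsilon_{HG(O)})$; this map is surjective because surjections in abelian semi-simple categories split by a Schur-type argument (Lemma~\ref{lem:section}), which is enough to cancel it and deduce equality. The lifting identity $\overline{\varphi}_{H(M)} = \varphi_M$ for $M \in \cC$ follows from the same analysis applied to $N = H(M)$, again using that $\varphi$ is trivial on $G({\bf 1}_\cD)$. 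The hard part I anticipate is keeping the Beck-resolution bookkeeping straight so that the tensoriality square genuinely reduces to the compatibility of $\epsilon$ with $\otimes$; the precise form of (A3) is exactly what is needed to make this reduction clean.

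Finally, assuming (A3'), I would pass to the limit. One has $\cD \simeq \mathrm{colim}_i \cD_i$, whence $\mathrm{Gal}(\cD) \simeq \mathrm{lim}_i \mathrm{Gal}(\cD_i)$. Each $H_i$ satisfies (A3), so the previous step produces a lift $\overline{\varphi_i} \in \mathrm{Gal}(\cD_i)$ of $\varphi$ for every index $i$. Part (iii) applied to the transition functors between the $\cD_i$ ensures injectivity of the maps in the filtered diagram $\{\mathrm{Gal}(\cD_i)\}_i$, which forces the family $\{\overline{\varphi_i}\}_i$ to be compatible and hence to assemble into a well-defined element $\overline{\varphi} \in \mathrm{Gal}(\cD)$ lifting $\varphi$, thereby completing the proof of (iv) and of the short exact sequence.
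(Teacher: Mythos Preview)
Your proposal is correct and follows the paper's own approach exactly: the paper's proof of Theorem~\ref{thm:general} explicitly reduces (iv) to ``run the same argument'' as the inclusion $\mathrm{Ker}(P)\subseteq\mathrm{Im}(I)$ in \S\ref{sec:proof4}, with (A3) (resp.\ (A3')) playing the role of the finite (resp.\ infinite) extension hypothesis, and your write-up is precisely that abstraction.

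One small imprecision worth tightening in the (A3') step: you write ``part (iii) applied to the transition functors between the $\cD_i$'', but assumption (A3') only stipulates that each $H_i:\cC\to\cD_i$ satisfies (A3), and says nothing about the transition functors $\cD_i\to\cD_j$. The paper's route (see the equalities \eqref{eq:equalities}) is to observe instead that (A3) for $H_i$ yields the surjective counit $\epsilon_Y:H_iG_i(Y)\twoheadrightarrow Y$, hence (A2) for $H_i$, hence injectivity of $I_i:\mathrm{Gal}(\cD_i)\to\mathrm{Gal}(\cC)$ by (iii); compatibility of the lifts $\overline{\varphi_i}$ then follows because both $\overline{\varphi_i}$ and the image of $\overline{\varphi_j}$ under the transition map lie over the same $\varphi$ via the injective $I_i$.
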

Due to its simplicity and generality we believe that Theorem~\ref{thm:general} can be useful in other applications to Galois groups and Tannakian categories. When applied to $K:=F$, $\cA:= \NAM(k)_F$, $\cC=\NNum^\dagger(k)_F$, $\cD:=\NNum^\dagger(\overline{k})_F$, $\omega:= \overline{HP_\ast}$, and $H:=-\otimes_k \overline{k}$, Theorem  ~\ref{thm:general} (as well as its proof) reduces to Theorem~\ref{thm:main4}. 
\begin{proof}
Since by hypothesis $\cA$ is a full subcategory of $\cC$, item (i) follows from the Tannakina formalism; see \cite[\S2.3.3]{Andre}. Let $\varphi$ be a $\otimes$-automorphism of the fiber functor $\omega: \cD \to \mathrm{Vect}(K)$, \ie an element of $\mathrm{Gal}(\cD)$. Under these notations, item (ii) follows automatically from the equalities 
$$ \varphi_{\oplus^m_{i=1}{\bf 1}_\cD} = \oplus^m_{i=1} \varphi_{{\bf 1}_\cD}=\oplus_{i=1}^m \id_{\omega({\bf 1}_\cD)}= \id_{\omega(\oplus_{i=1}^m {\bf 1}_\cD)}\,.$$
Item (iii) is the content of the general Proposition~\ref{prop:monomorphism}. In what concerns item (iv), its proof is similar to the proof of the inclusion $\mathrm{Ker}(P) \subseteq \mathrm{Im}(I)$ of Theorem~\ref{thm:main4}. Simply run the same argument and use the assumption (A3) (resp. (A3')) instead of the assumption that the field extension $\overline{k}/k$ is finite (resp. infinite).
\end{proof}
\subsection{A new proof of \eqref{eq:shortexact}}\label{sub:new-proof}
Recall from Deligne-Milne \cite[page~63]{DelMil}, the proof of the following short exact sequence
\begin{equation}\label{eq:ses-last}
1 \too \mathrm{Gal}(\Num^\dagger(\overline{k})_F) \stackrel{I}{\too} \mathrm{Gal}(\Num^\dagger(k)_F)\stackrel{P}{\too} \mathrm{Gal}(\overline{k}/k) \too 1\,.
\end{equation}
As in Theorem~\ref{thm:general}(i) the subjectivity of $P$ follows from the Tannakian formalism. The proof of the injectivity of $I$ can also be considered as an application of Theorem~\ref{thm:general}(ii): as explained in {\em loc. cit.}, every smooth projective $\overline{k}$-scheme $Z$ admits a model $Z_0$ over a finite field extension $k_0/k$ and consequently one obtains a surjective morphism $(\mathrm{Res}_{k_0/k}(Z_0))_{\overline{k}} \twoheadrightarrow Z$. Our general Proposition~\ref{prop:monomorphism} should then be considered as an axiomatization of this geometric argument.

On the other hand, the proof of the exactness of \eqref{eq:ses-last} at the middle term is based on the interpretation of $\Hom(N_{\overline{k}},N'_{\overline{k}})$ as an Artin motive for any two objects $N, N' \in \Num^\dagger(k)_F$. This is the key step in Deligne-Milne's proof that doesn't seem to admit a noncommutative analogue. In order to prove this exactness one can alternative apply Theorem~\ref{thm:general} to $K:=F$, $\cA:=\AM(k)_F$, $\cC:= \Num^\dagger(k)_F$, $\cD:= \Num^\dagger(\overline{k})_F$, $\omega:= H_{dR}(-)$, and to the base-change functor $H:= \Num^\dagger(k)_F \to \Num^\dagger(\overline{k})_F, Z \mapsto Z_{\overline{k}}$. The verification of (A3) and (A3') is similar to the one made in the proof of Theorem~\ref{thm:main4}.

\end{document}